\newtheorem{theorem}{Theorem}
\numberwithin{theorem}{section}
\newaliascnt{lemma}{theorem}
\newtheorem{lemma}[lemma]{Lemma}
\newaliascnt{corollary}{theorem}
\newtheorem{corollary}[corollary]{Corollary}
\newaliascnt{fact}{theorem}
\newaliascnt{question}{theorem}
\newaliascnt{proposition}{theorem}
\newtheorem{proposition}[proposition]{Proposition}
\Crefname{theorem}{Theorem}{Theorems}
\Crefname{lemma}{Lemma}{Lemmas}
\Crefname{corollary}{Corollary}{Corollaries}
\Crefname{fact}{Fact}{Facts}
\Crefname{question}{Question}{Questions}
\Crefname{proposition}{Proposition}{Propositions}
\theoremstyle{definition}
\newaliascnt{definition}{theorem}
\newtheorem{definition}[definition]{Definition}
\newaliascnt{remark}{theorem}
\newtheorem{remark}[remark]{Remark}
\newaliascnt{example}{theorem}
\newtheorem{example}[example]{Example}
\newaliascnt{notation}{theorem}
\newtheorem{notation}[notation]{Notation}
\Crefname{definition}{Definition}{Definitions}
\Crefname{remark}{Remark}{Remarks}
\Crefname{example}{Example}{Examples}
\Crefname{notation}{Notation}{Notations}
\theoremstyle{remark}
\newaliascnt{claim}{theorem}
\newtheorem{claim}[claim]{Claim}
\Crefname{claim}{Claim}{Claims}
\renewcommand{\>}{\rangle}
\renewcommand{\sf}[1]{\mathsf{#1}}
\renewcommand{\frak}[1]{\mathfrak{#1}}
\newcommand{\alg}[1]{\mathfrak{#1}}
    \newcommand{\A}{\alg{A}}
    \newcommand{\B}{\alg{B}}
\renewcommand{\sp}[1]{\mathfrak{#1}}
    \newcommand{\X}{\sp{X}}
    \newcommand{\Y}{\sp{Y}}
\newcommand{\fr}[1]{\mathcal{#1}}
    \newcommand{\F}{\fr{F}}
    \newcommand{\G}{\fr{G}}
\newcommand{\class}[1]{\mathcal{#1}}
    \newcommand{\V}{\class{V}}
    \newcommand{\U}{\class{U}}
    \newcommand{\D}{\class{D}}
    \newcommand{\C}{\class{C}}
    \newcommand{\R}{\class{R}}
    \renewcommand{\S}{\class{S}}
    \newcommand{\FH}{\sf{FH}}
    \newcommand{\KF}{\sf{KF}}
\newcommand{\fin}{_\mathrm{fin}}
\newcommand{\si}{_\mathrm{si}}
\renewcommand{\sup}{\mathrm{sup}}
\renewcommand{\r}[1]{\mathrm{rank} (#1)}
\newcommand{\pow}[1]{\mathcal{P} (#1)}
\newcommand{\Log}{\mathsf{Log}}
\newcommand{\logic}[1]{\mathsf{#1}}
    \newcommand{\K}{\logic{K}}
    \newcommand{\Kf}{\logic{K4}}
    \newcommand{\Kff}[2]{\logic{K4^{#1}_{#2}}}
    \newcommand{\Sf}{\logic{S4}}
    \newcommand{\wKf}{\logic{wK4}}
    \newcommand{\IPC}{\logic{IPC}}
\newcommand{\NExt}[1]{\mathop{\mathsf{NExt}}{#1}}
\renewcommand{\sf}[1]{\mathsf{#1}}
\newcommand{\Clop}{\mathsf{Clop}}
\renewcommand{\phi}{\varphi}
\newcommand{\emp}{\emptyset}
\newcommand{\conti}{2^{\aleph_0}}
\newcommand{\Dia}{\Diamond}
\newcommand{\Boxx}[1]{\Box^{#1}}
\newcommand{\inj}{\hookrightarrow}
\newcommand{\surj}{\twoheadrightarrow}
\newcommand{\Lor}{\bigvee}
\newcommand{\Land}{\bigwedge}
\def\irrefsingle{\tikz[baseline=.1ex]{
\fill (0,0.6ex) circle (2pt) coordinate (A);}
}
\def\refseeirref{\tikz[baseline=.1ex]{
\draw(0,0.6ex) circle (2pt) coordinate (A);
\fill(3ex,0.6ex) circle (2pt) coordinate (B);
\draw[->, shorten <=0.4ex, shorten >=0.4ex] (A)-> (B);}
}
\newenvironment{acknowledgements}{%
  \begin{abstract}
}{%
  \end{abstract}
}
\renewcommand{\S}{\mathcal{S}}
\renewcommand{\FH}{\mathcal{FH}}
\begin{document}

\title{Chopping More Finely: Finite Countermodels in \\ Modal Logic via the Subdivision Construction}
\author{Tenyo Takahashi\footnote{\href{mailto:t.takahashi@uva.nl}{t.takahashi@uva.nl}}}
\affil{\small Institute for Logic, Language and Computation, \\ University of Amsterdam}
\date{}
\maketitle

\begin{abstract}
We present a new method, the Subdivision Construction, for proving the finite model property (the fmp) for broad classes of modal logics and modal rule systems. The construction builds on the framework of stable canonical rules, and produces a finite modal space, dually, a finite modal algebra, that serves as a finite countermodel of such rules, yielding the fmp. We apply the Subdivision Construction to prove the fmp for logics and rule systems axiomatized by stable canonical formulas and rules of finite modal algebras of finite height. As a consequence, we identify a class of union-splittings in $\NExt{\Kf}$ with degree of Kripke incompleteness 1.\footnote{This paper is based on \cite[Chapter 4]{TakahashiThesis}.}
\end{abstract}

\section{Introduction}

A normal modal logic $L$ has the \emph{finite model property} (the \emph{fmp}, for short) if every formula that is not a theorem of $L$ is refuted by a finite countermodel of $L$. Algebraically, this is equivalent to saying that the variety of modal algebras validating $L$ is generated by its finite members. The fmp is one of the central and well-studied properties of modal logics (see, e.g., \cite{blackburnModalLogic2001,czModalLogic1997}). The fmp strictly implies Kripke completeness; a Kripke complete logic without the fmp can be found in \cite{gabbayDecidableFinitelyAxiomatizable1971}. Moreover, every finitely axiomatizable logic with the fmp is decidable \cite{Harrop1958}.

Many general fmp results for classes of modal logics are known; see \cite{czModalLogic1997,blackburnModalLogic2001} for more examples than the ones discussed in this paragraph. Among the early results, Bull \cite{bullThatAllNormal1966} and Fine \cite{fineLogicsContaining431971} showed that every extension of $\logic{S4.3}$ has the fmp, and Segerberg \cite{Segerberg1971} proved that every extension of $\Kf$ of finite depth has the fmp. Later work shifted to identifying classes of logics with specific semantic properties that have the fmp, in other words, to finding sufficient conditions for having the fmp. \emph{Subframe logics} in $\NExt{\Kf}$ were defined and studied by Fine \cite{fineK4I, fineK4II}, and Zakharyaschev \cite{zakharyaschevCanonicalFormulas1, CanonicalFormulasK421996} generalized them to \emph{cofinal subframe logics}; all transitive subframe logics and cofinal subframe logics have the fmp. Moreover, Zakharyaschev applied two combinatorial methods based on \emph{canonical formulas}, the method of \emph{inserting points} \cite{zakharyaschevSufficientConditionFinite1993} and the method of \emph{removing points} \cite{ZakharyaschevCanonical3}, for proving the fmp for large classes of transitive logics (see also \cite[Chapter 11]{czModalLogic1997}). These results imply, in particular, the fmp of extensions of $\Kf$ with modal reduction principles and the fmp of extensions of $\Sf$ with a formula in one variable. More recently, Bezhanishvili et al. \cite{stablemodallogic} introduced \emph{$\logic{M}$-stable logics} and showed that they have the fmp when the logic $\logic{M}$ \emph{admits filtration in the strong sense}. This assumption can be weakened to $\logic{M}$ admitting \emph{definable filtration} \cite{takahashi2025stablecanonicalrulesformulas}. Another important fmp result comes from the study of lattices of modal logics. Blok \cite{blok1978degree} showed that all \emph{union-splittings} in $\NExt{\K}$ have the fmp. The fmp is also studied for modal rule systems, or modal consequence relations, of multi-conclusion rules, namely universal theories of modal algebras. It was shown in \cite{stablecanonicalrules} that all \emph{stable rule systems} have the fmp.

In this paper, we introduce a new combinatorial method, which we call the \emph{Subdivision Construction}, to show the fmp for modal logics and modal rule systems. The construction is based on the theory of \emph{stable canonical rules} introduced in \cite{stablecanonicalrules}. Every formula $\phi$ is semantically equivalent to finitely many stable canonical rules, say, $\{\rho(\A_i, D_i): 1 \leq i \leq n\}$, where each $\A_i$ is a finite $L$-algebra and $D_i \subseteq A_i$. So, if $\phi \notin L$, then some $L$-algebra $\B$ refutes $\phi$, and hence $\B \not\models \rho(\A_i, D_i)$ for some $i$. By the semantic characterization of stable canonical rules, this means that there is a stable embedding from $\A_i$ into $\B$ satisfying the closed domain condition (CDC) for $D_i$ (we recall these notions in \Cref{Sec 2}). Let $\X$ and $\F_i$ be the dual spaces of $\B$ and $\A_i$, respectively. The stable embedding above dualizes to a surjective stable map from $\X$ onto $\F_i$ satisfying CDC for a set $\D_i$ of clopen subsets of $\F_i$. If we can find a finite modal space $\F'$ such that $\F' \models L$ with a surjective stable map from $\F'$ onto $\F_i$ satisfying CDC for $\D_i$, then the dual algebra of $\F'$ is a finite $L$-algebra refuting $\rho(\A_i, D_i)$, and thus refuting $\phi$. This yields the fmp of $L$. In this way, we can reduce a refutation of an arbitrary formula to a refutation of a stable canonical rule, and then to a combinatorial problem on modal spaces. This strategy was used, for example, in \cite{bezhanishviliBLOKESAKIATHEOREMSSTABLE2025a} to provide an alternative proof of the Blok-Esakia theorem \cite{blok1976,esakia1976}. 

The Subdivision Construction starts from a stable map $f$ from $\X$ onto $\F_i$, and constructs a finite modal space $\F'$ by refining the finite quotient represented by $f$. Since $\F_i$ is finite, the map $f$ divides $\X$ into finitely many regions, namely the preimages $f^{-1}(w)$ for $w \in \F_i$. The construction makes this division finer by subdividing each preimage into finitely many subregions. This motivates the name \emph{Subdivision Construction}. 
The construction proceeds point by point. For a point $w \in \F_i$, we divide the preimage $f^{-1}(w)$ into finitely many subregions, replace the point $w$ by names for these subregions, and obtain a finite modal space $\F'_w$. The construction also induces a map from $\X$ to $\F'_w$, which agrees with $f$ except for $f^{-1}(w)$ and sends each point in a subregion to the corresponding name, together with a map from $\F'_w$ to $\F_i$ that identifies all these names with $w$. This is how the construction works at the point $w$.

\begin{figure}[H]
    \centering
    \begin{tikzpicture}[scale=1]

    \node at (0, 2.5) {$\X$};
    \draw (0,0) circle [x radius = 1.5, y radius = 1.8];
    \draw (0.2,0.3) circle [x radius = 0.8, y radius = 1];
    
    \draw (-0.1,-0.64) -- (-0.1,1.24);
    \draw (0.45,-0.64) -- (0.45,1.24);

    \draw (-0.58,0.6) -- (0.98,0.6);
    \draw (-0.58,0) -- (0.98,0);

    \node at (6, 2.5) {$\F_i$};
    \draw[->>] (1,2.5) -- (5,2.5);
    \node at (3, 2.8) {$f$};
    \draw (6,0) circle [x radius = 1.2, y radius = 1.5];
    \node (1) at (6.2,0.4) {$\bullet$};
    \node at (6.5,0.4) {$w$};
    
    \draw[-] (0.22,1.31) -- (6.2,0.45);
    \draw[-] (0.22,-0.71) -- (6.2,0.35);

    \draw (9,0) circle [x radius = 1.2, y radius = 1.5];
    \node (1) at (8.8,0.9) {$\bullet$};
    \node (1) at (9.2,0.9) {$\bullet$};
    \node (1) at (9.6,0.9) {$\bullet$};
    \node (1) at (8.8,0.4) {$\bullet$};
    \node (1) at (9.2,0.4) {$\bullet$};
    \node (1) at (9.6,0.4) {$\bullet$};
    \node (1) at (8.8,-0.1) {$\bullet$};
    \node (1) at (9.2,-0.1) {$\bullet$};
    \node (1) at (9.6,-0.1) {$\bullet$};
\end{tikzpicture}
\caption{The Subdivision Construction at $w$.}
\end{figure}

Let the \emph{rank} of a point in $\F_i$ be the length of the longest path starting at that point. In practice, the construction starts with the points of rank $1$, namely the dead ends in $\F_i$, and proceeds inductively according to rank. Points of infinite rank are left unchanged. After finitely many steps, we obtain a finite modal space $\F'$, a surjective map $f':\X \to \F'$, and a surjective map $g:\F' \to \F_i$ such that the following diagram commutes.

\begin{figure}[H]
        \centering
        \adjustbox{scale=1,center}{
\begin{tikzcd}
\X \arrow[rr, "f"] \arrow[rd, "f'"'] &                      & \F_i \\
                                     & \F' \arrow[ru, "g"'] &   
\end{tikzcd}
        }
\end{figure}

The precise properties of $\F'$, $f'$, and $g$ are stated in \Cref{4: Lem main}. It turns out that in certain cases, as will be demonstrated in \Cref{Sec 4}, we can show that $\F' \models L$. It follows that $L$ has the fmp, as discussed in the previous paragraph. This general scheme of applying the Subdivision Construction to establish the fmp is formulated as \Cref{4: Thm main scheme}.

We apply the Subdivision Construction to prove the fmp for the following classes. These fmp results, involving stable logics and stable canonical formulas, parallel the general fmp result regarding cofinal subframe logics and canonical formulas in \cite[Theorem 11.55]{czModalLogic1997}, originally proved by Zakharyaschev \cite{ZakharyaschevCanonical3} using the aforementioned method of removing points. Both results establish the fmp for classes of logics of the form ``a special logic + some special formulas.'' Recall that a modal algebra is of finite height if it validates $\Box^{n+1} \bot$ for some $n \in \omega$.
\begin{enumerate}
    \item A stable rule system + stable canonical rules of finite modal algebras of finite height.
    \item A strictly stable logic + stable canonical formulas of finite s.i.~modal algebras of finite height, where a logic $L$ is \emph{strictly stable} if the class of s.i.~$L$-algebras is closed under finite stable subalgebras.
    \item A $\Kff{m+1}{1}$-stable logic + stable canonical formulas of finite s.i.~$\Kff{m+1}{1}$-algebras of finite height (for $m \geq 1$), where $\Kff{m+1}{1}$ is the logic $\K + \Dia^{m+1} p \to \Dia p$. 
\end{enumerate}
Items (1) and (3) substantially generalize the fmp results for stable rule systems \cite{stablecanonicalrules} and $\Kff{m+1}{1}$-stable logics \cite{takahashi2025stablecanonicalrulesformulas} (see also \cite{stablemodallogic} for the case of $\Kf$-stable logics) by allowing additional axioms. 

We also analyze these classes from the viewpoint of lattices of modal logics and rule systems. Recall that a normal extension $L$ of a logic $L_0$ is a \emph{splitting} in $\NExt{L_0}$ if there is a logic $L' \in \NExt{L_0}$ such that, for every $L'' \in \NExt{L_0}$, exactly one of $L \subseteq L''$ and $L'' \subseteq L'$ holds. A logic $L \in \NExt{L_0}$ is a \emph{union-splitting} if it is the join of a set of splittings in $\NExt{L_0}$. The same definitions apply to rule systems. We observe that the logics and rule systems listed above are union-splittings in the corresponding lattices. In particular, in item (2), if we take the base logic to be $\K$ and extra axioms to be Jankov formulas of finite s.i.~modal algebras of finite height (see Subsection~\ref{subsec 2.3}), then we obtain the fmp of all union-splittings in $\NExt{\K}$, originally proved by Blok \cite{blok1978degree}.

Item (3) has an additional consequence for degrees of Kripke incompleteness. Recall that a logic $L$ has \emph{degree of Kripke incompleteness} 1 in $\NExt{L_0}$ if, for every logic $L' \in \NExt{L_0}$ with $L \neq L'$, some Kripke frame validates one of $L$ and $L'$ and refutes the other \cite{fineIncompleteLogicContainingS41974} (see also \cite[Section 10.5]{czModalLogic1997}). Blok's dichotomy theorem gives a complete characterization of the degrees of Kripke incompleteness in $\NExt{\K}$ \cite{blok1978degree}: union-splittings have degree 1, while other logics have degree $\conti$. For $\NExt{\Kf}$, however, the problem remains open \cite[Problem 10.5]{czModalLogic1997}, and the situation is far less clear. Fine's incomplete logic in $\NExt{\Kf}$ \cite{fineIncompleteLogicContainingS41974} shows that there is a logic with degree at least 2 in $\NExt{\Kf}$, but it is not known whether any logic has degree at least 3. On the degree 1 side, the main known result is that logics of finite depth have degree 1 in $\NExt{\Kf}$, which follows from the fact that they are union-splittings in $\NExt{\Kf}$ (see, e.g., \cite{FrameBasedFormulas2008}) and have the fmp \cite{Segerberg1971}. The fmp result in (3) above yields classes of union-splittings in $\NExt{\Kff{m+1}{1}}$ that have degree $1$ in the corresponding lattices. In the case of $\Kf$, we will observe that the logics with degree 1 obtained in this way are not of finite depth. Thus, the Subdivision Construction produces a new class of logics with degree 1 in $\NExt{\Kf}$, disjoint from the finite-depth class, and contributes to the problem of determining degrees of Kripke incompleteness in $\NExt{\Kf}$.

The paper is organized as follows. In \Cref{Sec 2}, we recall the required preliminaries on modal algebras, modal spaces, stable canonical rules, and stable canonical formulas. \Cref{Sec 3} develops the Subdivision Construction for modal spaces, proving \Cref{4: Lem main}. In \Cref{Sec 4}, we apply \Cref{4: Lem main} through the general scheme of \Cref{4: Thm main scheme} to prove the fmp results described above. We also observe that the logics and rule systems addressed are union-splittings in the corresponding lattices, and derive the consequence on degrees of Kripke incompleteness. Finally, in \Cref{Sec 5}, we discuss possible further applications of the Subdivision Construction.

\section{Preliminaries} \label{Sec 2}

We assume familiarity with basic modal logic and refer to \cite{blackburnModalLogic2001, czModalLogic1997} for syntax and Kripke semantics. We will use $\Box^m \phi$ as an abbreviation of $\Box \cdots \Box \phi$ with $m$ many $\Box$ and $\Boxx{\leq m}$ as an abbreviation of $\phi \land \cdots \land \Box^m \phi$; the dual abbreviation applies to $\Dia$. We only work with normal modal logics in this paper, so we call them simply \emph{logics}. For a logic $L$, let $\NExt{L}$ denote the lattice of normal extensions of $L$. Recall that pre-transitive logics $\Kff{m}{n}$ are logics axiomatized by $\Dia^m p \to \Dia^n p$ (or equivalently, $\Box^n p \to \Box^m p$) over $\K$. The logic $\Kff{m}{n}$ defines the condition $\forall x \forall y \: (x R^m y \to x R^n y)$, where $R^k$ is the $k$-time composition of $R$, on modal spaces. Note that $\Kff{2}{1}$ is the transitive logic $\Kf$.

\subsection{Modal algebras and modal spaces}

Recall that a \emph{modal algebra} is a pair $\A = (A, \Dia)$ of a Boolean algebra $A$ and a unary operation $\Dia$ on $A$ such that $\Dia 0 = 0$ and $\Dia (a \lor b) = \Dia a \lor \Dia b$. Valuations, satisfaction, and validity are defined as usual. We call a modal algebra $\A$ an $L$-algebra for a logic $L$ if $\A \models L$. The above abbreviation applies to operations on modal algebras as well. Recall that a \emph{modal space} is a pair $\X = (X, R)$ of a Stone space $X$ and a binary relation $R \subseteq X \times X$ such that $R[x]$ is closed for each $x \in X$ and the set $R^{-1}[U]$ is clopen for each clopen set $U \subseteq X$. A subset $U \subseteq X$ is an \emph{upset} if $x \in U$ and $xRy$ implies $y \in U$. Modal spaces are also known as \emph{descriptive frames}. Valuations, satisfaction, and validity are defined as usual. We call a modal space $\X$ an $L$-space for a logic $L$ if $\X \models L$. Finite modal spaces are identified with finite Kripke frames with the discrete topology. Continuous p-morphisms between modal spaces are simply called p-morphisms. It is well-known that the category of modal algebras and homomorphisms is dually equivalent to the category of modal spaces and p-morphisms (see, e.g., \cite{venema6AlgebrasCoalgebras2007}). 

We sketch the construction of the dual equivalence functors, which is built on top of \emph{Stone duality}. Given a modal algebra $\A$, the dual modal space of $\A$ is $\A_* = (A_*, R_{\Dia})$ where $A_*$ is the dual Stone space of $A$, and $R_{\Dia}$ is defined by $x R_{\Dia} y$ iff for any $a \in A$, $a \in y$ implies $\Dia a \in x$. Recall that the topology of $\A_*$ is generated by the base $\{\sigma(a): a \in A\}$ where $\sigma(a) = \{x \in A_*: a \in x\}$. For a homomorphism $h: \A \to \B$, the dual p-morphism of $h$ is $h_*: \B_* \to \A_*; x \mapsto h^{-1}[x]$. Conversely, given a modal space $\X = (X, R)$, the dual modal algebra of $\X$ is $\X^* = (X^*,  \Dia_R)$ where $X^* = \Clop(X)$, the Boolean algebra of clopen subsets of $X$, and $\Dia_R \: a = R^{-1}[a]$ for $a \in X^*$. For a p-morphism $f: \X \to \Y$, the dual homomorphism of $f$ is $f^*: \Y^* \to \X^*; a \mapsto f^{-1}[a]$. Under this duality, subalgebras correspond to p-morphic images and homomorphic images to closed upsets. 

We assume familiarity with basic universal algebra and refer to \cite{UniversalAlgebraFundamentals2011, ACourseInUniversalAlgebra1981} for an introduction. For a class $\class{K}$ of modal algebras, $\V(\class{K})$ (resp. $\U(\class{K})$) denotes the least variety (resp. universal class) containing $\class{K}$. Recall that logics correspond one-to-one to varieties by the operations $\V(L) \coloneq \{\A: \A \models L\}$ for a logic $L$ and $\Log(\V) \coloneq \{\phi: \V \models \phi\}$ for a variety $\V$. We will use the following dual characterization of subdirectly irreducible (s.i., for short) modal algebras by Venema \cite{venemaDualCharSI2004}. A point $x$ in a modal space $\X$ is called a \emph{topo-root} of $\X$ if the closure of $R^{< \omega}[x]$ is $X$. A modal space $\X$ is called \emph{topo-rooted} if the set of topo-roots of $\X$ has a non-empty interior. 

\begin{theorem} \label{4: Thm si toporooted}
    A modal algebra is s.i. iff its dual space is topo-rooted. 
\end{theorem}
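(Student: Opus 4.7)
The plan is to pass through the standard duality between congruences of $\A$ and closed upsets of its dual space $\X := \A_*$: congruences of $\A$ correspond anti-isomorphically to closed upsets of $\X$, with the identity congruence corresponding to $X$ and the trivial congruence to $\emptyset$. Under this correspondence, $\A$ is s.i.\ iff $\mathrm{Con}(\A)$ has a smallest non-identity element iff the lattice of closed upsets of $\X$ has a largest proper element $F_0 \subsetneq X$. The theorem thus reduces to showing that $\X$ is topo-rooted iff such an $F_0$ exists.

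A preliminary lemma I would use is that the closure of an upset of $\X$ is again an upset: if $y \in \overline{U}$ with $U$ an upset, $yRz$, and $V$ is a clopen neighborhood of $z$, then $y$ lies in the clopen $R^{-1}[V]$, so some $u \in U \cap R^{-1}[V]$ exists, and the $R$-successor of $u$ witnessing membership in $R^{-1}[V]$ lies in $U$ by upward-closure, giving $z \in \overline{U}$. Consequently $\overline{R^{<\omega}[x]}$ is a closed upset for every $x$, equal to $X$ precisely when $x$ is a topo-root. For the forward direction, if $\A$ is s.i.\ with largest proper closed upset $F_0$ and $x$ belongs to the non-empty open set $U := X \setminus F_0$, then $\overline{R^{<\omega}[x]}$ is a closed upset containing $x \notin F_0$, so by maximality of $F_0$ it cannot be proper and must equal $X$; thus $U$ witnesses that the set of topo-roots has non-empty interior.

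For the converse, assume $\X$ is topo-rooted and pick a non-empty clopen $V$ contained in the set of topo-roots (possible since clopens form a base). Define
\[
F_0 := X \setminus \bigcup_{n \geq 0} R^{-n}[V] = \bigcap_{n \geq 0} \bigl(X \setminus R^{-n}[V]\bigr).
\]
Since $R^{-1}$ preserves clopens, each $R^{-n}[V]$ is clopen by induction, so $F_0$ is closed; and $F_0$ is an upset because $xRy$ with $y \in R^{-n}[V]$ forces $x \in R^{-(n+1)}[V]$. It is proper as $V \cap F_0 = \emptyset$. To see $F_0$ is the largest proper closed upset, let $Y$ be any proper closed upset: no topo-root $v$ lies in $Y$ (otherwise $Y \supseteq \overline{R^{<\omega}[v]} = X$), so $Y \cap V = \emptyset$; and if $y \in Y$ with $y R^n v$ and $v \in V$, upward-closure gives $v \in R^n[y] \subseteq Y$, a contradiction. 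Hence $Y \cap R^{-n}[V] = \emptyset$ for all $n$, i.e., $Y \subseteq F_0$, making $F_0$ the largest proper closed upset and $\A$ s.i.

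The step I expect to be most delicate is the construction of $F_0$ in the converse: in a Kripke frame without topology, $\bigcup_n R^{-n}[V]$ is merely an upset with no guarantee of being open, so the Stone-topological structure together with the clopen-preservation of $R^{-1}$ is exactly what is needed to ensure $F_0$ is closed and to convert the abstract meet-irreducibility condition on the congruence lattice into the concrete topological condition of topo-rootedness.
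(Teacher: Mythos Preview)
The paper does not prove this theorem; it is stated with attribution to Venema and no argument is given. Your proof is correct and self-contained: the correspondence between congruences of $\A$ and closed upsets of $\A_*$ reduces subdirect irreducibility to the existence of a largest proper closed upset, and your two directions establish exactly this. The lemma that closures of upsets are upsets is used correctly, and in the converse the set $F_0 = X \setminus \bigcup_{n\geq 0} R^{-n}[V]$ is indeed a closed upset (each $R^{-n}[V]$ clopen, complement of a downset) that dominates every proper closed upset by the argument you give. There is nothing in the paper to compare against beyond noting that you have supplied what the paper defers to the reference; your argument is essentially the standard one.
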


Recall that $x \in \X$ is a \emph{root} of $\X$ if $R^{< \omega}[x] = X$ and $\X$ is \emph{rooted} if $\X$ has a root. In a finite modal space, the notions of topo-roots and roots coincide because every subset is clopen. So, a finite modal space is topo-rooted iff it is rooted.

\emph{Stable homomorphisms} and the \emph{closed domain condition} for modal algebras and modal spaces were introduced in \cite{stablecanonicalrules}, generalizing that for Heyting algebras and Priestley space in \cite{LocallyFiniteReducts2017}. Intuitively, a stable homomorphism $h$ does not preserve $\Dia$ (so it is not a modal algebra homomorphism), but it does satisfy the inequality $\Dia h(a) \leq h(\Dia a)$; the closed domain condition indicates for which elements $a$ the equality $\Dia h(a) = h(\Dia a)$ holds. Stable homomorphisms were studied in \cite{TopocanonicalCompletionsClosure2008} as semihomomorphisms and in \cite{Ghilardi01012010} as continuous morphisms. 

\begin{definition}
    Let $\A$ and $\B$ be modal algebras. 
    \begin{enumerate}
        \item A Boolean homomorphism $h: A \to B$ is a \emph{stable homomorphism} if $\Dia h(a) \leq h(\Dia a)$ for all $a \in A$.
        \item Let $h: A \to B$ be a stable homomorphism. For $a \in A$, we say that $h$ satisfies the \emph{closed domain condition (CDC) for $a$} if $h(\Dia a) = \Dia h(a)$. For $D \subseteq A$, we say that $h$ satisfies the \emph{closed domain condition (CDC) for $D$} if $h$ satisfies CDC for all $a \in D$.
    \end{enumerate}
\end{definition}

These definitions dualize to modal spaces as follows.

\begin{definition}
    Let $\X = (X, R)$ and $\Y = (Y, Q)$ be modal spaces. 
    \begin{enumerate}
        \item A continuous map $f: X \to Y$ is a \emph{stable map} if $xRy$ implies $f(x)Qf(y)$ for all $x, y \in X$.
        \item Let $f: X \to Y$ be a stable map. For a clopen subset $D \subseteq Y$, we say that $f$ satisfies the \emph{closed domain condition (CDC) for $D$} if
\[
Q[f(x)] \cap D \neq \emptyset \Rightarrow f(R[x]) \cap D \neq \emptyset.
\]
    For a set $\D$ of clopen subsets of $Y$, we say that $f: X \to Y$ satisfies \emph{the closed domain condition (CDC) for $\D$} if $f$ satisfies CDC for all $D \in \D$.
    \end{enumerate}
    
\end{definition}

It follows directly from the definition that a stable homomorphism $h: A \to B$ satisfying CDC for $A$ is a modal algebra homomorphism, and a stable map $f: X \to Y$ satisfying CDC for $\Clop(Y)$ is a p-morphism.

\begin{notation} \leavevmode
    \begin{enumerate}
        \item We write $\A \inj \B$ if $\A$ is a subalgebra of $\B$ and $\B \surj \A$ if $\A$ is a homomorphic image of $\B$. We write $\Y \inj \X$ if $\Y$ is a closed upset of $\X$ and $\X \surj \Y$ if $\Y$ is a p-morphic image of $\X$.
        \item We write $h: \A \inj_D \B$ if $h$ is a stable embedding satisfying CDC for $D$, and $\A \inj_D \B$ if there is such an $h$. We write $f: \X \surj_\D \Y$ if $f$ is a surjective stable map satisfying CDC for $\D$, and $\X \surj_\D \Y$ if there is such an $f$.
    \end{enumerate}
\end{notation}

Note that if $\A \inj_D \B$, then $\A \inj_{D'} \B$ for any $D' \subseteq D$, and similarly for stable maps between modal spaces. We call $\A$ a \emph{stable subalgebra} of $\B$ if $\A \inj_\emp \B$, and $\Y$ a \emph{stable image} of $\X$ if $\X \surj_\emp \Y$.

\subsection{Definable filtrations}

A logic $L$ has the \emph{finite model property} (the \emph{fmp}, for short)
if the variety $\V(L)$ is generated by its finite members. That is, for any formula $\phi \notin L$, there is a finite $L$-algebra (or, equivalently, a finite $L$-space) that refutes $\phi$. The fmp is often shown via the method of \emph{filtration}. We refer to \cite[Definition 2.36]{blackburnModalLogic2001} and \cite[Section 5.3]{czModalLogic1997} for the standard filtration for Kripke frames and models, and \cite[Section 4]{stablecanonicalrules} for an algebraic presentation. Recall that in standard filtration, the equivalence relation $\sim_\Theta$ is determined by the subformula-closed set $\Theta$. However, one can use a finer relation as long as the number of equivalence classes remains finite. The idea of \emph{definable filtration} is to use a finer equivalence relation that is induced by a set of formulas, so that the projection map from a modal space to its filtration is continuous, which enables an algebraic presentation. While the idea of definable filtration appeared first in \cite{ageneralfiltrationmethod1972}, it was explicitly introduced for Kripke frames in \cite{Kikot2020}, and an algebraic formulation was provided in \cite{takahashi2025stablecanonicalrulesformulas}. 

\begin{definition} \label{3: Def alg filtration}
    Let $\A = (A, \Dia)$ be a modal algebra, $V$ be a valuation on $\A$, $\Theta$ be a finite subformula-closed set of formulas, and $\Theta'$ be a finite subformula-closed set of formulas containing $\Theta$. A \emph{definable filtration of $(\A, V)$ for $\Theta$ through $\Theta'$} is a modal algebra $\A' = (A', \Dia')$ with a valuation $V'$ such that:
    \begin{enumerate}
        \item $A'$ is the Boolean subalgebra of $A$ generated by $V[\Theta'] \subseteq A$, 
        \item $V'(p) = V(p)$ for $p \in \Theta'$ and $V'(p) = 0$ for $p \notin \Theta'$,
        \item The inclusion $\A' \inj \A$ is a stable homomorphism satisfying CDC for $D$, where 
        \[D = \{V(\phi) : \Dia \phi \in \Theta\}.\]
    \end{enumerate}
    We also call $\A'$ a \emph{definable filtration of $\A$ for $\Theta$ through $\Theta'$}.
\end{definition}

Note that the extended set $\Theta'$ of formulas is used to generate the Boolean subalgebra $A'$, while the closed domain $D$ is determined by $\Theta$. A standard filtration through $\Theta$ is a special type of definable filtration where $\Theta' = \Theta$. We can automatically deduce the fmp for logics and rule systems that \emph{admit definable filtration}. 

\begin{definition} \label{3: Def admit filtration} \leavevmode
    \begin{enumerate}
        \item A class $\C$ of modal algebras \emph{admits definable filtration} if for any finite subformula-closed set $\Theta$ of formulas, there is a finite subformula-closed set $\Theta'$ containing $\Theta$ such that, for any modal algebra $\A \in \C$ and any valuation $V$ on $\A$, there is a definable filtration $(\A', V')$ of $(\A, V)$ for $\Theta$ through $\Theta'$ such that $\A' \in \C$.
        \item  A modal logic $L$ \emph{admits definable filtration} if the variety $\V(L)$ admits definable filtration. 
        \item A rule system $\S$ \emph{admits definable filtration} if the universal class $\U(\S)$ admits definable filtration. 
    \end{enumerate}
\end{definition}

\begin{theorem} \label{3: Thm filtration implies fmp}
    If a logic $L$ admits definable filtration, then it has the fmp. If a rule system $\S$ admits definable filtration, then it has the fmp.
\end{theorem}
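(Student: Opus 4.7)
The plan is to run a textbook \emph{filtration $\Rightarrow$ fmp} argument, but carried out algebraically using the stable-homomorphism-with-CDC formulation of \Cref{3: Def alg filtration}. Suppose $\phi \notin L$. Then some $L$-algebra $\A$ with a valuation $V$ refutes $\phi$, i.e. $V(\phi) \neq 1$. Let $\Theta = \Sub(\phi)$, a finite subformula-closed set, and apply the hypothesis that $\V(L)$ admits definable filtration: this yields a finite subformula-closed $\Theta' \supseteq \Theta$ and a definable filtration $(\A', V')$ of $(\A, V)$ for $\Theta$ through $\Theta'$ with $\A' \in \V(L)$. Since $A'$ is the Boolean subalgebra of $A$ generated by the finite set $V[\Theta']$, it is a finitely generated Boolean algebra, hence finite. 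So $\A'$ is a finite $L$-algebra.

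The main step is the \emph{filtration lemma}: viewing $\A' \subseteq \A$ via the inclusion $h$, I would show by induction on $\psi \in \Theta$ that $V'(\psi) = V(\psi)$ (inside $\A$). The base case follows from clause~(2) of \Cref{3: Def alg filtration} since variables of $\phi$ lie in $\Theta \subseteq \Theta'$. Boolean cases are immediate because $h$ is a Boolean homomorphism. The critical case is $\Dia \psi \in \Theta$: by subformula-closure $\psi \in \Theta$, so the inductive hypothesis gives $V'(\psi) = V(\psi)$; and because $V(\psi) \in D$, clause~(3) says $h$ satisfies CDC at $V(\psi)$, i.e. $h(\Dia' V'(\psi)) = \Dia \, h(V'(\psi))$. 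Unpacking, $V'(\Dia \psi) = \Dia' V'(\psi) = \Dia V(\psi) = V(\Dia \psi)$. Applying this with $\psi = \phi$ shows $V'(\phi) = V(\phi) \neq 1$, so $\A'$ refutes $\phi$. Combined with $\A' \in \V(L)$, this witnesses the fmp for $L$.

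For rule systems, the argument is the same with $\V(L)$ replaced by $\U(\S)$. Given a rule $\rho = \Gamma / \Delta \notin \S$, there is $\A \in \U(\S)$ and a valuation $V$ with $V(\gamma) = 1$ for all $\gamma \in \Gamma$ and $V(\delta) \neq 1$ for all $\delta \in \Delta$. Take $\Theta$ to be the subformula closure of $\Gamma \cup \Delta$, obtain a finite definable filtration $(\A',V')$ with $\A' \in \U(\S)$, and observe that since universal classes are closed under the algebraic constructions that reduce validity of rules to algebras, $\A'$ is a finite member of $\U(\S)$. The same inductive filtration lemma gives $V'(\psi) = V(\psi)$ for every $\psi \in \Theta$, so $\A'$ together with $V'$ refutes $\rho$.

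The only delicate point is the $\Dia$-case of the filtration lemma, which is exactly where the design of \Cref{3: Def alg filtration} pays off: the generating set $\Theta'$ is allowed to be strictly larger than $\Theta$ (so that $A'$ contains enough elements for the induction to make sense and for $h$ to be well-defined), while CDC is demanded only for the smaller set $D = \{V(\phi) : \Dia\phi \in \Theta\}$, which is precisely what is needed to commute $\Dia$ past $h$ on the relevant elements. I do not foresee any genuine obstacle beyond correctly tracking these two sets throughout the induction.
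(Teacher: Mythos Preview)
Your argument is correct and is exactly the standard filtration-implies-fmp proof, carried out in the algebraic setting of \Cref{3: Def alg filtration}. Note, however, that the paper does not actually supply a proof of this theorem: it is stated as a known result (with the background references to Shehtman, Kikot et al., and the algebraic formulation cited just before the statement), so there is no ``paper's own proof'' to compare against. Your write-up is precisely the argument one would give to fill in the omitted proof; the only cosmetic point is that the sentence about universal classes being ``closed under the algebraic constructions that reduce validity of rules to algebras'' is unnecessary, since $\A' \in \U(\S)$ is delivered directly by \Cref{3: Def admit filtration}.
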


Note that a logic $L$ admits definable filtration iff the rule system $\S_L$ admits definable filtration because they correspond to the same class of modal algebras. Many logics are known to admit standard filtration and thus admit definable filtration. For example, $\K$, $\logic{T}$, and $\logic{D}$ admit the least and the greatest filtration. The transitive logics $\Kf$ and $\Sf$ admit the Lemmon filtration (see \cite[Section 5.3]{czModalLogic1997} or \cite[Section 2.3]{blackburnModalLogic2001}). A definable filtration for the pre-transitive logics $\Kff{m+1}{1}$ $(m \geq 1)$ was constructed in \cite{ageneralfiltrationmethod1972} (see \cite{takahashi2025stablecanonicalrulesformulas} for an algebraic presentation).

\subsection{Stable canonical rules and formulas} \label{subsec 2.3}

We refer to \cite{stablecanonicalrules, Jeřábek_2009, kracht8ModalConsequence2007} for inference rules and consequence relations in modal logic. We will only address modal multi-conclusion rules and normal modal multi-conclusion consequence relations, or normal modal multi-conclusion rule systems, so we simply call them \emph{rules} and \emph{rule systems}. For a set $\R$ of rules, let $\S_0 + \R$ be the least rule system containing $\S_0 \cup \R$. For a logic $L$, let $\S_L$ be the least rule system containing $L$. When $\S = \S_0 + \R$, we say that $\S$ is \emph{axiomatized} by $\R$ over $\S_0$. When $\S_L$ is axiomatized by $\R$ over $\S_{L_0}$, we say that $L$ is axiomatized by $\R$ over $L_0$. Two rules $\rho$ and $\rho'$ are \emph{equivalent} over $\S$ if $\S + \rho = \S + \rho'$. Each rule corresponds to a universal sentence, and rule systems correspond one-to-one to universal classes by the operations $\U(\S) \coloneq \{\A: \A \models \S\}$ for a rule system $\S$ and $\S(\U) \coloneq \{\rho: \U \models \rho\}$ for a universal class $\U$. For a rule system $\S$, let $\NExt{\S}$ denote the lattice of normal extensions of $\S$. The notion of fmp applies to rule systems as well: a rule system $\S$ has the fmp if the universal class $\U(\S)$ is generated by its finite members. That is, for any rule $\rho \notin \S$, there is a finite $\S$-algebra (or, equivalently, a finite $\S$-space) that refutes $\rho$.

\emph{Stable canonical rules} are introduced in \cite{stablecanonicalrules} as an alternative to canonical rules, the theory of which is developed in \cite{Jeřábek_2009}. The basic idea of stable canonical rules, as well as other characteristic formulas, is analogous to that of \emph{diagrams} widely used in model theory: to encode the structure of finite algebras or finite spaces (frames), but only partially. They are defined from finite algebras and finite spaces, and their validity has a purely semantic characterization.

\begin{definition} \label{3: Def scr}
    Let $\A$ be a finite modal algebra and $D \subseteq A$. The \emph{stable canonical rule} $\rho(\A, D)$ associated to $\A$ and $D$ is the rule $\Gamma/\Delta$, where:
    \begin{align*}
        \Gamma  = & \{p_a \lor p_b \leftrightarrow p_{a \lor b} : a,b \in A\} \cup \\
        & \{\lnot p_a \leftrightarrow p_{\lnot a} : a \in A\} \cup \\
        & \{\Dia p_a \rightarrow p_{\Dia a} : a \in A\} \cup \\
        & \{p_{\Dia a} \rightarrow \Dia p_a : a \in D\},
    \end{align*}
    and
    \[\Delta = \{p_a : a \in A, a \ne 1\}.\]
\end{definition}

\begin{theorem} \label{3: Thm sc rule char}
    Let $\A$ be a finite modal algebra, $D \subseteq A$, and $\B$ be a modal algebra. Then 
    \[\B \not\models \rho(\A, D) \text{ iff } \A \inj_D \B.\]
\end{theorem}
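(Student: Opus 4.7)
The plan is to exploit the design of $\rho(\A, D)$: each of the four families of clauses in $\Gamma$ is tailored to encode exactly one of the algebraic conditions (Boolean homomorphism, stability, CDC on $D$), while the consequent set $\Delta$ encodes injectivity. Concretely, I would establish a correspondence between valuations $V$ on $\B$ refuting $\rho(\A,D)$ and stable embeddings $h\colon \A \inj_D \B$ via the assignment $h(a) = V(p_a)$ for $a \in A$. Both implications then reduce to reading off one set of data from the other.

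For the direction $(\Leftarrow)$, given $h\colon \A \inj_D \B$, I would define the valuation on $\B$ by $V(p_a) := h(a)$. That $h$ is a Boolean homomorphism makes the clauses $p_{a\lor b} \leftrightarrow p_a \lor p_b$ and $p_{\lnot a} \leftrightarrow \lnot p_a$ evaluate to $1$ under $V$. Stability of $h$, i.e.\ $\Dia h(a) \leq h(\Dia a)$, gives $V(\Dia p_a \to p_{\Dia a}) = 1$; and CDC for $D$ gives the reverse inequality on $D$, so $V(p_{\Dia a} \to \Dia p_a) = 1$ for each $a \in D$. Finally, injectivity of $h$ forces $h(a) \neq 1$ whenever $a \neq 1$, so no $p_a \in \Delta$ is verified by $V$; hence $V$ refutes $\rho(\A,D)$.

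For the direction $(\Rightarrow)$, given a valuation $V$ refuting $\rho(\A,D)$, I would set $h(a) := V(p_a)$; this is well-defined because the variables are indexed by the elements of $A$. The Boolean clauses ensure that $h$ commutes with $\lor$ and $\lnot$, so $h$ is a Boolean homomorphism; the third family yields stability, and the fourth yields CDC for $D$. Injectivity is the one place where $\Delta$ enters the picture: if $h(a) = 1$ for some $a \neq 1$, then $p_a \in \Delta$ with $V(p_a) = 1$, contradicting the assumption that $V$ refutes the rule. Since a Boolean homomorphism is injective iff its kernel is trivial (equivalently, iff $h(a) = 1$ forces $a = 1$), we conclude that $h$ is a stable embedding satisfying CDC for $D$.

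I do not anticipate any substantive obstacle; the argument is essentially bookkeeping, matching each syntactic clause to its intended algebraic meaning. The one subtlety worth flagging is the role of $\Delta$: it is tempting to think of $\Delta$ only as the consequent of a rule, but here it is doing the work of witnessing injectivity via the standard characterization of injective Boolean homomorphisms, which is what turns a stable homomorphism into a stable embedding.
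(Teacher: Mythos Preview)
Your proposal is correct and is precisely the standard argument for this result. The paper itself does not supply a proof of this theorem; it is quoted as a known fact from \cite{stablecanonicalrules}, so there is no paper proof to compare against, but your bookkeeping via the correspondence $h(a) = V(p_a)$ is exactly how the original reference establishes it.
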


We also write $\rho(\F, \D)$ for $\rho(\A, D)$ if $\A$ is dual to $\F$ and $\D = \sigma(D)$; see \cite[Remark 5.8]{stablecanonicalrules} for a direct definition via finite modal spaces. Then, \Cref{3: Thm sc rule char} dualizes as follows. For a finite modal space $\F$, $\D \subseteq \pow{F}$, and a modal space $\X$, we have 
\[\X \not\models \rho(\F, \D) \text{ iff } \X \surj_\D \F.\]

Over a rule system that admits definable filtration, every rule is semantically equivalent to finitely many stable canonical rules \cite{takahashi2025stablecanonicalrulesformulas}. Note that we can replace $\S$ with a logic $L$ by taking $\S = \S_L$ and $\rho$ with a formula $\phi$ by taking $\rho = /\phi$.

\begin{theorem} \label{3: Thm scr complete} \leavevmode
    Let $\S$ be a rule system that admits definable filtration. For any rule $\rho$, there exist stable canonical rules $\rho(\A_1, D_1), \dots, \rho(\A_n, D_n)$ where each $\A_i$ is a finite $\S$-algebra and $D_i \subseteq A_i$, such that for any $\S$-algebra $\B$,
    \[\B \models \rho \text{ iff } \B \models\rho(\A_1, D_1), \dots, \rho(\A_n, D_n).\]  
\end{theorem}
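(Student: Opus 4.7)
The plan is to enumerate a finite list of candidate pairs produced by definable filtration and then verify the equivalence against \Cref{3: Thm sc rule char} in both directions. Writing $\rho=\Gamma/\Delta$, let $\Theta$ be the subformula-closure of $\Gamma\cup\Delta$, and let $\Theta'\supseteq\Theta$ be the finite subformula-closed set supplied by \Cref{3: Def admit filtration}(3). Since any Boolean algebra generated by at most $|\Theta'|$ elements has at most $2^{2^{|\Theta'|}}$ elements, there are, up to isomorphism, only finitely many pairs $(\A,D)$ for which $\A$ is a finite $\S$-algebra, $\A$ is Boolean-generated by $V[\Theta']$ for some valuation $V$ on $\A$ refuting $\rho$, and $D=\{V(\phi):\Dia\phi\in\Theta\}$. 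I would list representatives as $(\A_1,D_1),\dots,(\A_n,D_n)$ and fix a witnessing valuation $V_i$ for each.

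For the direction $\B\models\rho\Rightarrow\B\models\rho(\A_i,D_i)$ for all $i$, I would argue contrapositively: given a valuation $V$ on $\B$ refuting $\rho$, form a definable filtration $(\B',V')$ of $(\B,V)$ for $\Theta$ through $\Theta'$, which satisfies $\B'\in\S$ by hypothesis. A standard induction on $\phi\in\Theta$, with the modal step handled by the CDC that the inclusion $\B'\inj\B$ satisfies at $V(\psi)$ for every $\Dia\psi\in\Theta$, shows $V'(\phi)=V(\phi)$ throughout $\Theta$, so in particular $V'$ refutes $\rho$ on $\B'$. Then the pair $(\B',\{V'(\phi):\Dia\phi\in\Theta\})$ matches some $(\A_i,D_i)$ up to isomorphism, and the inclusion $\B'\inj_{D_i}\B$ together with \Cref{3: Thm sc rule char} yields $\B\not\models\rho(\A_i,D_i)$.

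For the converse direction, suppose $\B\not\models\rho(\A_i,D_i)$. By \Cref{3: Thm sc rule char} we obtain a stable embedding $h:\A_i\inj_{D_i}\B$. I would define a valuation $V$ on $\B$ by $V(p)=h(V_i(p))$ and prove by induction on $\phi\in\Theta$ that $V(\phi)=h(V_i(\phi))$; the modal step uses that $V_i(\psi)\in D_i$ whenever $\Dia\psi\in\Theta$, so $h$ commutes with $\Dia$ at $V_i(\psi)$ by CDC. Since $V_i$ refutes $\rho$ on $\A_i$ and $h$ is an injective Boolean homomorphism, $V$ refutes $\rho$ on $\B$.

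The argument is a routine assembly of the two available tools, and I do not foresee substantive difficulty. The only point requiring care is the bookkeeping of CDC: the filtration is designed to satisfy CDC at exactly the set $\{V(\phi):\Dia\phi\in\Theta\}$, which is also the set that drives the $h$-induction in the converse direction, so the two halves lock together without leftover conditions.
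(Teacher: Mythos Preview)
The paper does not prove this theorem; it is quoted from \cite{takahashi2025stablecanonicalrulesformulas} (the version for ordinary filtration is in \cite{stablecanonicalrules}). Your argument is exactly the standard one from those references: bound the size of filtration outputs by $2^{2^{|\Theta'|}}$, list the finitely many resulting pairs $(\A_i,D_i)$, use the filtration lemma to turn a refutation of $\rho$ on $\B$ into a refutation on some $\A_i$ together with the stable inclusion $\A_i\inj_{D_i}\B$, and for the other direction push the fixed refuting valuation $V_i$ forward along a stable embedding $h:\A_i\inj_{D_i}\B$, using CDC at $D_i$ for the modal induction step.

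One cosmetic slip: your two direction labels are swapped. The paragraph you head with ``$\B\models\rho\Rightarrow\B\models\rho(\A_i,D_i)$'' assumes $\B\not\models\rho$ and concludes $\B\not\models\rho(\A_i,D_i)$ for some $i$, which is the contrapositive of the \emph{other} implication; and conversely for your second paragraph. Both arguments are correct, only the headers need exchanging. A second minor point: rather than requiring that $\A_i$ be ``Boolean-generated by $V[\Theta']$'' (which mixes the modal structure of $\A_i$ into the generating set in a slightly awkward way), it is cleaner to enumerate all finite $\S$-algebras of cardinality at most $2^{2^{|\Theta'|}}$ carrying a valuation refuting $\rho$; the filtration output automatically lands in this list.
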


\emph{Stable canonical formulas} were introduced for $\Kf$ in \cite{stablecanonicalrules} and generalized to the pre-transitive logics $\Kff{m+1}{1}$ $(m \geq 1)$ in \cite{takahashi2025stablecanonicalrulesformulas} as a type of \emph{characteristic formulas} (also called \emph{algebra-based formulas} and \emph{frame-based formulas}). We refer to \cite{JankovFormulasAxiomatization2022b} for a comprehensive overview of the techniques of characterization formulas for superintuitionistic logics. 

\begin{definition} \label{3: Def scf for pretran}
    Let $\A$ be a finite s.i.~$\Kff{m+1}{1}$-algebra and $D \subseteq A$. Let $\rho(\A, D) = \Gamma / \Delta$ be the stable canonical rule defined in \Cref{3: Def scr}. We define the \emph{stable canonical formula} $\gamma^m(\A, D)$ as 
    \[\gamma^m(\A, D) = \bigwedge\{\Box^{\leq m} \gamma: \gamma \in \Gamma\} \to \bigvee\{\Box^{\leq m}\delta: \delta \in \Delta\}.\]
\end{definition}

We write $\gamma(\A, D)$ for $\gamma^1(\A, D)$. Similar to stable canonical rules, stable canonical formulas have the following semantic characterization and axiomatize all logics in $\NExt{\Kff{m+1}{1}}$ \cite{takahashi2025stablecanonicalrulesformulas}. 

\begin{theorem} \label{3: Thm scf char}
    Let $\A$ be a finite s.i.~$\Kff{m+1}{1}$-algebra and $D \subseteq A$. Then, for any $\Kff{m+1}{1}$-algebra $\B$, 
    \[\B \not\models \gamma^m(\A, D) \text{ iff there is a s.i.~homomorphic image $\C$ of $\B$ such that $\A \inj_D \C$}.\]
\end{theorem}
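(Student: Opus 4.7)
The plan is to prove the two directions of the biconditional separately.

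For $(\Leftarrow)$, suppose $h: \A \inj_D \C$ with $\C$ an s.i.\ homomorphic image of $\B$. Let $\Z = \C_*$, which is topo-rooted by \Cref{4: Thm si toporooted}, and fix a point $z_0$ in the non-empty interior of the topo-roots. Define the valuation $V(p_a) = h(a)$; the Boolean, stability, and CDC conditions on $h$ together make every clause $\gamma \in \Gamma$ globally valid on $\Z$, so in particular $z_0 \models \Boxx{\leq m}\gamma$. For each $a \neq 1$, injectivity of $h$ makes $V(p_a)^c = Z \setminus h(a)$ a nonempty clopen of $\Z$; using $R^{<\omega}[z_0] = R^{\leq m}[z_0]$ (which follows from $R^{m+1} \subseteq R$ in a $\Kff{m+1}{1}$-space, by iteratively shortening paths of length $\geq m+1$) together with the density of $R^{<\omega}[z_0]$ in $\Z$, some $z \in R^{\leq m}[z_0]$ lies in $V(p_a)^c$, so $z_0 \not\models \Boxx{\leq m}p_a$. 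Thus $z_0$ refutes $\gamma^m(\A, D)$ in $\C$, and since $\C$ is a homomorphic image of $\B$, also $\B \not\models \gamma^m(\A, D)$.

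For $(\Rightarrow)$, pick $V$ and $y_0 \in \Y := \B_*$ witnessing $\B \not\models \gamma^m(\A, D)$. By subdirect representation we may assume $\B$ is s.i.\ (the refutation survives in some s.i.\ quotient of $\B$, which is still a homomorphic image). Set $\Z = \overline{R^{<\omega}[y_0]}$. Since $R^{<\omega}[y_0] = R^{\leq m}[y_0]$ and each clopen $V(\gamma)$ contains $R^{\leq m}[y_0]$ (because $y_0 \models \Boxx{\leq m}\gamma$), closedness of $V(\gamma)$ yields $\Z \subseteq V(\gamma)$, so every $\gamma \in \Gamma$ holds on $\Z$. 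Define $h_0: A \to \C_0 := \Z^*$ by $h_0(a) = V(p_a) \cap Z$: the $\Gamma$-clauses make $h_0$ a stable Boolean homomorphism satisfying CDC for $D$, and for each $a \neq 1$ the failure of $\Boxx{\leq m}p_a$ at $y_0$ supplies a witness in $R^{\leq m}[y_0] \subseteq Z$ showing $h_0(a) \neq 1_{\C_0}$, giving injectivity. Thus $\A \inj_D \C_0$, with $\C_0$ a homomorphic image of $\B$.

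If $\C_0$ is already s.i., we are done; otherwise, apply Zorn's lemma to the family of closed upsets $W \subseteq \Z$ intersecting each clopen $V(p_b) \cap Z$ for $b$ an atom of $\A$, using compactness and the finite intersection property to secure chain lower bounds (intersections of such chains). The dual $\C$ of a minimal $W^*$ is a homomorphic image of $\C_0$, and composition of $h_0$ with the quotient preserves stability, CDC for $D$, and, by the defining property of $W^*$, injectivity. The main obstacle is showing that a minimal $W^*$ is topo-rooted, so that \Cref{4: Thm si toporooted} applies and yields that $\C$ is s.i.; this requires a compactness argument exploiting the finite clopen partition $\{V(p_b) \cap Z : b \text{ atom of } \A\}$ of $\Z$ to show that the union of proper closed upsets of $W^*$ is itself closed, thus providing a maximum proper closed upset whose complement supplies the non-empty open interior of topo-roots.
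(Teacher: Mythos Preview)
The paper does not prove this theorem; it is quoted from \cite{takahashi2025stablecanonicalrulesformulas} in the preliminaries. So there is no proof in the paper to compare against, and I assess your argument on its own merits.

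Your $(\Leftarrow)$ direction is correct and cleanly argued.

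Your $(\Rightarrow)$ direction is correct up through the construction of $h_0: \A \inj_D \C_0$ with $\C_0$ a homomorphic image of $\B$, but the final passage to a subdirectly irreducible quotient has a genuine gap. You apply Zorn's lemma to the family of closed upsets $W \subseteq Z$ meeting every $V(p_b) \cap Z$ for $b$ an atom of $\A$, and then claim that a minimal $W^*$ is topo-rooted because the union $U$ of its proper closed upsets is closed. That union is indeed closed: by minimality each proper closed upset misses some $V(p_{b_i}) \cap W^*$, hence lies in the (clopen) maximal closed upset $M_i := W^* \setminus (R^{-1})^{\leq m}[V(p_{b_i}) \cap W^*]$ missing $V(p_{b_i})$, so $U = \bigcup_i M_i$ is a finite union of closed sets. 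But you never show $U \neq W^*$, and with your Zorn family there is no reason it should hold. Nothing prevents $W^*$ from being covered by the $M_i$'s --- a minimal $W^*$ can decompose into pieces each hitting only some of the $V(p_b)$'s, with no single point reaching all of them, hence no root.

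The fix is to use the subdirect irreducibility of $\A$ directly. Since $\A$ is s.i., it has an opremum $c_\A$ (the largest element strictly below $1$). Run Zorn instead on closed upsets $W \subseteq Z$ meeting the \emph{single} clopen $V(p_{\lnot c_\A}) \cap Z$; chains still have lower bounds by compactness. Injectivity of the composite survives because $a \neq 1$ implies $a \leq c_\A$, hence $V(p_{\lnot a}) \cap Z \supseteq V(p_{\lnot c_\A}) \cap Z$, which meets $W^*$. And now topo-rootedness is immediate: for any $z \in W^* \cap V(p_{\lnot c_\A})$, the closed upset $R^{\leq m}[z]$ contains $z$, hence lies in the Zorn family, so by minimality $R^{\leq m}[z] = W^*$ and $z$ is a root. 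Thus $W^* \cap V(p_{\lnot c_\A})$ is a nonempty clopen set of roots. Algebraically this is just: take a congruence $\theta$ on $\C_0$ maximal with $[h_0(c_\A)]_\theta \neq 1$; then $\C_0/\theta$ is s.i.\ with opremum $[h_0(c_\A)]_\theta$, and the composite $\A \to \C_0/\theta$ remains a stable embedding satisfying CDC for~$D$.
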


We also write $\gamma^m(\F, \D)$ for $\gamma^m(\A, D)$ if $\A$ is dual to $\F$ and $\D = \sigma(D)$. Then, \Cref{3: Thm scf char} dualizes as follows. For a finite rooted $\Kff{m+1}{1}$-space $\F$, $\D \subseteq \pow{F}$, and a $\Kff{m+1}{1}$-space $\X$, we have 
\[\X \not\models \gamma^m(\F, \D) \text{ iff there is a closed upset $\Y$ of $\X$ such that $\Y \surj_\D \F$}.\]

\begin{theorem} \label{3: Thm scf complete}
    Let $m \geq 1$. Every logic $L \supseteq \Kff{m+1}{1}$ is axiomatizable over $\Kff{m+1}{1}$ by stable canonical formulas. Moreover, if $L$ is finitely axiomatizable over $\Kff{m+1}{1}$, then $L$ is axiomatizable over $\Kff{m+1}{1}$ by finitely many stable canonical formulas.
\end{theorem}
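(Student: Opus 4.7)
The plan is to axiomatize $L$ by the set
\[
\Gamma_L \coloneq \{\gamma^m(\A, D) : \A \text{ a finite s.i.~} \Kff{m+1}{1}\text{-algebra},\ D \subseteq A,\ \gamma^m(\A, D) \in L\},
\]
and to establish $L \subseteq \Kff{m+1}{1} + \Gamma_L$ via the contrapositive: whenever a $\Kff{m+1}{1}$-algebra $\B$ refutes some $\phi \in L$, I will exhibit an scf in $\Gamma_L$ that is also refuted by $\B$.

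The witnessing scf will come from a definable filtration. First I pass to an s.i.~quotient of $\B$ still refuting $\phi$, using Birkhoff's subdirect decomposition and the fact that formulas are preserved under homomorphic images, so I may assume that $\B$ is s.i.~with a valuation $V$ satisfying $V(\phi) \neq 1$. Since $\Kff{m+1}{1}$ admits definable filtration (as recorded in \Cref{Sec 2}), I obtain a finite $\Kff{m+1}{1}$-algebra $\A$ and a stable embedding $\A \inj_D \B$ with $D = \{V(\psi) : \Dia \psi \in \Sub(\phi)\}$. A short dual argument is needed to ensure $\A$ is s.i.: the dual $\B_*$ is topo-rooted by \Cref{4: Thm si toporooted}, and a continuous stable surjection from a topo-rooted space onto a finite frame has rooted image, so the dual of $\A$ is rooted and $\A$ is s.i. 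Hence $\gamma^m(\A, D)$ is well-defined, and the filtration theorem yields $\A \not\models \phi$.

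The key claim is then that $\gamma^m(\A, D) \in L$. Suppose toward contradiction that an $L$-algebra $\M$ refutes $\gamma^m(\A, D)$; by \Cref{3: Thm scf char}, some s.i.~homomorphic image $\M'$ of $\M$ admits a stable embedding $h : \A \inj_D \M'$. Lift $V$ to a valuation $V'$ on $\M'$ by setting $V'(p) = h(V(p))$ on the variables appearing in $\Sub(\phi)$. An induction on $\psi \in \Sub(\phi)$ then gives $V'(\psi) = h(V(\psi))$: the Boolean cases use that $h$ is a Boolean homomorphism, while at the modal case $\Dia \psi$ the inequality $\Dia\, h(V(\psi)) \leq h(\Dia V(\psi))$ is stability and the reverse inequality is CDC applied at $V(\psi) \in D$. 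Injectivity of $h$ together with $V(\phi) \neq 1_{\A}$ then yields $V'(\phi) \neq 1_{\M'}$, so $\M' \not\models \phi$. But $\M' \in \V(L)$ as a homomorphic image of an $L$-algebra, contradicting $\phi \in L$. Therefore $\gamma^m(\A, D) \in L$, and since $\B$ is its own s.i.~quotient with $\A \inj_D \B$, the algebra $\B$ refutes $\gamma^m(\A, D)$, as desired.

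For the second claim, observe that in the construction above $|A|$ is bounded by $2^{|\Theta'|}$, where $\Theta'$ is the expansion of $\Sub(\phi)$ fixed by the definable filtration for $\Kff{m+1}{1}$. Hence for a given $\phi$ only finitely many pairs $(\A, D)$ can arise up to isomorphism. A finite-conjunction variant of the main argument then shows that $\phi$ is equivalent over $\Kff{m+1}{1}$ to the conjunction of those $\gamma^m(\A, D) \in \Kff{m+1}{1} + \phi$ among these finitely many candidates; given finitely many axioms $\phi_1, \ldots, \phi_n$ of $L$, this produces a finite set of scf's axiomatizing $L$. The most delicate point is the reverse-filtration induction above: correctly matching the CDC set $D$ from the filtration of $\B$ with the CDC hypothesis on the stable embedding into $\M'$, so that the modal case of the induction gives exact equality rather than only the stability inequality.
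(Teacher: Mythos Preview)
The paper does not give its own proof of this theorem; it is stated in \Cref{Sec 2} as a preliminary result imported from \cite{takahashi2025stablecanonicalrulesformulas}. So there is nothing to compare against in this paper. That said, your argument is essentially the standard one and is correct.

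A few remarks on the write-up. Your symbol $V$ is doing double duty: it starts as a valuation on $\B$ and then silently becomes the filtered valuation on $\A$. In the modal step of the induction you write $h(\Dia V(\psi))$; this only makes sense with $\Dia = \Dia_\A$, since $h$ is defined on $A$. To conclude $V'(\Dia\psi) = h(V(\Dia\psi))$ with $V(\Dia\psi)$ computed in $\B$, you are implicitly using the CDC of the filtration embedding $\A \inj_D \B$ (so that $\Dia_\A V(\psi) = \Dia_\B V(\psi)$ for $V(\psi)\in D$), not just the CDC of $h$. Equivalently, you could run the entire induction with the filtered valuation on $\A$ and invoke the filtration lemma once at the end to get $\bar V(\phi)\neq 1_\A$. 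Either reading is fine, but it would be cleaner to fix one.

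Your reduction to the s.i.\ case and the rootedness argument for $\A$ are both correct; the latter is exactly the argument the paper spells out inside the proof of \Cref{4: Lem finitely K4m1 stable}. The bound $|A|\le 2^{|\Theta'|}$ and the ensuing finiteness for the ``moreover'' clause are right as well. One small logical point: after replacing $\B$ by its s.i.\ quotient you conclude that this quotient refutes $\gamma^m(\A,D)$; to finish the contrapositive for the original $\B$ you need one more line saying validity of formulas passes to quotients, so the original $\B$ refutes it too. You likely had this in mind, but it is worth stating.
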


We can define stable canonical formulas for $\K$ with similar effects as those for $\Kff{m+1}{1}$, but only for finite s.i.~modal algebras of finite height. A modal algebra $\A$ is of \emph{height $\leq n$} if $\A \models \Box^{n+1}\bot$, or equivalently, $\Box^{n+1} 0_\A = 1_\A$; $\A$ is of \emph{finite height} if it is of height $\leq n$ for some $n \in \omega$.

\begin{definition}
    Let $\A$ be a finite s.i.~modal algebra of height $\leq n$ and $D \subseteq A$. Let $\rho(\A, D) = \Gamma / \Delta$ be the stable canonical rule defined in \Cref{3: Def scr}. We define the \emph{stable canonical formula} $\epsilon(\A, D)$ as
    \[\epsilon(\A, D) = (\Box^{n+1} \bot \land \Land \{\Boxx{\leq n} \gamma : \gamma \in \Gamma\}) \to \Lor \{ \Boxx{\leq n} \delta : \delta \in \Delta\}.\]
\end{definition}

\begin{theorem} \label{3: Thm splitting fml char}
    Let $\A$ be a finite s.i.~modal algebra of finite height and $D \subseteq A$. Then, for any modal algebra $\B$,
    \[\B \not\models \epsilon(\A, D) \text{ iff there is a s.i.~homomorphic image $\C$ of $\B$ such that $\A \inj_D \C$}.\]
\end{theorem}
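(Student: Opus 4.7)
For the direction $(\Leftarrow)$, suppose $\A \inj_D \C$ with $\C$ an s.i.~homomorphic image of $\B$. The first observation is that stable embeddings preserve the equation $\Box^{n+1}\bot$: iterating the inequality $h(\Box a) \le \Box h(a)$ yields $h(\Box^{n+1} 0_\A) \le \Box^{n+1} 0_\C$, and since $\A$ has height $\le n$ the left-hand side is $1_\C$, so $\C \models \Box^{n+1}\bot$. By \Cref{3: Thm sc rule char} there is a valuation $V$ on $\C$ witnessing $\C \not\models \rho(\A, D)$: each $\gamma \in \Gamma$ evaluates to $1_\C$ and each $\delta \in \Delta$ to something strictly below. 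Dually, $V$ corresponds to a surjective stable map $f : \C_* \to \F$ satisfying CDC for $\sigma(D)$. Since $\C$ is s.i., \Cref{4: Thm si toporooted} furnishes a topo-root $r$ of $\C_*$; using finite height, $R^{<\omega}[r] = R^{\le n}[r]$ is dense in $\C_*$, and continuity of $f$ into the finite (hence discrete) space $\F$ forces $f(R^{\le n}[r]) = F$. At $r$ the antecedent of $\epsilon(\A, D)$ holds (from the height and from $V(\gamma) = 1_\C$), and for each $a \in A \setminus \{1\}$ some $y \in R^{\le n}[r]$ has $f(y) \notin \sigma(a)$, so $\Boxx{\le n} V(p_a)$ is false at $r$. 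Hence $r$ refutes $\epsilon(\A, D)$, whence $\C \not\models \epsilon(\A, D)$ and therefore $\B \not\models \epsilon(\A, D)$.

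For $(\Rightarrow)$, start from a valuation $V$ on $\B$ and a refutation point $x \in \B_*$: $R^{n+1}[x] = \emp$, every $\gamma \in \Gamma$ is true on $R^{\le n}[x]$, and each $\delta \in \Delta$ is falsified at some point of $R^{\le n}[x]$. As a first reduction, $\sigma(\Box^{n+1}\bot)$ is a clopen upset of $\B_*$ (if $R^{n+1}[y] = \emp$ and $yRz$ then $R^{n+1}[z] = \emp$ too), contains $R^{\le n}[x]$, and is dual to a height-$\le n$ homomorphic image of $\B$ still refuting $\epsilon(\A, D)$; so I may assume $\B$ has global height $\le n$. Now let $\Z = \overline{R^{<\omega}[x]}$, a closed upset of $\B_*$ (closures of upsets in modal spaces are upsets, since $R^{-1}$ sends opens to opens), dual to a homomorphic image $\C''$ of $\B$. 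Define $h : \A \to \C''$ by $h(a) = V(p_a) \cap Z$. The conjuncts of $\Gamma$ pointwise express the Boolean-homomorphism, stability, and CDC-for-$D$ conditions; each $V(\gamma)$ is a clopen containing the dense subset $R^{<\omega}[x] = R^{\le n}[x]$ of $\Z$, so it equals all of $\Z$, making $h$ a stable Boolean homomorphism with CDC for $D$. Injectivity follows because for every $a \neq 1_\A$ some point of $R^{\le n}[x] \subseteq Z$ falsifies $V(p_a)$, giving $h(a) \neq 1_{\C''}$. Thus $\A \inj_D \C''$.

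The main obstacle is to ensure that $\C''$, or some further homomorphic image of it, is s.i. By construction $x$ is a topo-root of $\Z$, and any proper closed upset $\Y \subsetneq \Z$ must omit $x$ (else $\Y \supseteq R^{<\omega}[x]$ and by closedness $\Y \supseteq \Z$, contradicting properness); the remaining subtlety is to conclude that the set of topo-roots of $\Z$ has non-empty interior. The cleanest route is to show that $\{x\}$ is clopen in $\Z$, which by Hausdorff separation of $\B_*$ reduces to $x \notin \overline{R^{\ge 1}[x]}$; finite height rules out cycles at $x$, and I expect an inductive separation argument exploiting the modal-space condition to carry this through. If the direct topological argument proves recalcitrant, the fallback is a Zorn-style argument on the set of congruences $\theta$ on $\C''$ for which the induced map $\A \to \C''/\theta$ remains injective -- stability and CDC descend to quotients automatically -- using finiteness of $\A$ to secure closure under arbitrary intersections and chain unions, and then exploiting the s.i.~structure of $\A$ (in the finite-height case its dual $\F$ is a finite DAG with a unique root, so the monolith of $\A$ is especially transparent) to argue meet-irreducibility of a maximal such $\theta$. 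The overall strategy closely parallels the proof of \Cref{3: Thm scf char} for the pre-transitive case from \cite{takahashi2025stablecanonicalrulesformulas}, with finite height globally playing the role of pre-transitivity.
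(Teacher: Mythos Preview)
The paper does not prove this theorem: it is stated in the preliminaries (\Cref{Sec 2}) as a background result with no proof supplied, so there is nothing in the paper to compare your argument against directly.

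That said, your proof is essentially correct; only the closing paragraph needs tightening. The $(\Leftarrow)$ direction is fine as written. For $(\Rightarrow)$, your first route---showing $\{x\}$ is clopen in $\Z$---goes through without any ``inductive separation argument'' and without the Zorn fallback. Once you have reduced to $\B \models \Box^{n+1}\bot$, recall that in any modal space the relation $R$ is closed as a subset of $X \times X$: if $(x,y) \notin R$ then $y$ lies in a clopen $U$ disjoint from $R[x]$, whence $(X \setminus R^{-1}[U]) \times U$ is an open box around $(x,y)$ missing $R$. By compactness of Stone spaces, $R[C]$ is then closed for every closed $C$. It follows that each $R^k[x]$ is closed, so $\Z = R^{\le n}[x]$ is already closed (your closure operator is redundant) and $R^{\ge 1}[x] = R[x] \cup \cdots \cup R^n[x]$ is closed as well. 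Global height $\le n$ forbids cycles, so $x \notin R^{\ge 1}[x]$, giving $\{x\} = \Z \setminus R^{\ge 1}[x]$ open---hence clopen---in $\Z$. Since $x$ is trivially a topo-root of $\Z$, the space $\Z$ is topo-rooted and $\C'' = \Z^*$ is s.i.\ by \Cref{4: Thm si toporooted}, completing the argument.
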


Two special types of stable canonical rules and formulas arise when we take the closed domain $D$ to be $\emp$ or the entire set $A$. We call a stable canonical rule of the form $\rho(\A, \emp)$ a \emph{stable rule} and a stable canonical formula of the form $\gamma^m(\A, \emp)$ a \emph{stable formula}. \emph{Stable rule systems} and \emph{stable logics} were introduced and studied in \cite{stablecanonicalrules,stablemodallogic} (see also \cite{FiltrationRevisitedLattices2018} for a comprehensive introduction).

\begin{definition} \label{3: Def stable}
    Let $\logic{M}$ be a logic.
    \begin{enumerate}
        \item A class $\class{K}$ of $\logic{M}$-algebras is \emph{$\logic{M}$-stable} if for any $\logic{M}$-algebra $\A$ and any $\B \in \class{K}$, if $\A \inj_\emp \B$, then $\A \in \class{K}$. The class $\class{K}$ is \emph{finitely $\logic{M}$-stable} if for any finite $\logic{M}$-algebra $\A$ and any $\B \in \class{K}$, if $\A \inj_\emp \B$, then $\A \in \class{K}$. We drop the prefix ``$\logic{M}$-'' when $\logic{M} = \K$.
        \item A rule system $\S$ is \emph{stable} if the universal class $\U(\S)$ is stable.
        \item A logic $L \supseteq \logic{M}$ is \emph{$\logic{M}$-stable} if $\V(L)$ is generated by an $\logic{M}$-stable class. We drop the prefix ``$\logic{M}$-'' when $\logic{M} = \K$.
    \end{enumerate}
\end{definition}

The following theorems are respectively from \cite{stablecanonicalrules} and \cite{takahashi2025stablecanonicalrulesformulas} (the case $\Kf$ is proved in \cite{stablemodallogic}).

\begin{theorem} \label{4: Thm stable rule system and logic and stable rule}
    A rule system $\S$ is stable iff $\S$ is axiomatizable over $\S_\K$ by stable rules. A logic $L$ is stable iff $L$ is axiomatizable over $\K$ by stable rules.
\end{theorem}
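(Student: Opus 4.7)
The plan is to take the rule-system biconditional as the core statement and deduce the logic version by reduction to it. For the easy direction $(\Leftarrow)$ of the rule-system case, suppose $\S = \S_\K + \{\rho(\A_i, \emp) : i \in I\}$. By \Cref{3: Thm sc rule char}, $\B \in \U(\S)$ iff no $\A_i$ embeds stably into $\B$, and transitivity of stable embeddings immediately gives that $\U(\S)$ is closed under stable subalgebras: if $\A \inj_\emp \B$ with $\B \in \U(\S)$ and $\A_i \inj_\emp \A$ for some $i$, composing would yield $\A_i \inj_\emp \B$, a contradiction.

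For the harder direction $(\Rightarrow)$ of the rule-system case, assume $\S$ is stable and set
\[
\S' := \S_\K + \{\rho(\A, \emp) : \A \text{ finite and } \rho(\A, \emp) \in \S\}.
\]
The inclusion $\S' \subseteq \S$ is immediate; for the converse, take $\rho \in \S$. Since $\S_\K$ admits definable filtration, \Cref{3: Thm scr complete} decomposes $\rho$ over $\S_\K$ into finitely many stable canonical rules $\rho(\A_i, D_i)$, each of which lies in $\S$. Since the identity map $\A_i \inj_{D_i} \A_i$ refutes $\rho(\A_i, D_i)$, we have $\A_i \notin \U(\S)$; stability of $\U(\S)$ combined with $\A_i \inj_\emp \A_i$ then rules out $\A_i \inj_\emp \B$ for any $\B \in \U(\S)$, giving $\rho(\A_i, \emp) \in \S$ and hence $\rho(\A_i, \emp) \in \S'$. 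Because $\A_i \inj_{D_i} \B$ implies $\A_i \inj_\emp \B$, the rule $\rho(\A_i, \emp)$ entails $\rho(\A_i, D_i)$, whence $\rho(\A_i, D_i) \in \S'$ and therefore $\rho \in \S'$.

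For the logic biconditional, $(\Leftarrow)$ follows from the rule-system $(\Leftarrow)$: a stable-rule axiomatization of $L$ over $\K$ makes $\S_L$ a stable rule system, so $\V(L) = \U(\S_L)$ is closed under stable subalgebras, making $\V(L)$ itself a stable class generating $\V(L)$, and $L$ is therefore stable. For $(\Rightarrow)$, my plan is to reduce to the rule-system case by showing that $L$ stable implies $\S_L$ stable; the rule-system $(\Rightarrow)$ then delivers the required stable-rule axiomatization of $L$. The main obstacle is precisely this reduction: passing from ``$\V(L) = \V(\class{K})$ for a stable class $\class{K}$'' to ``$\V(L)$ itself is closed under stable subalgebras'' is nontrivial, since stable rules are universal sentences not preserved under homomorphic images, so the $\HSP$-closure of a stable class need not remain stable. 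I would attempt to close this gap by analysing the $\HSP$-generation of $\V(L)$ semantically via duality, tracing how a hypothetical stable subalgebra of a homomorphic image of a subalgebra of a product of $\class{K}$-algebras can be reconstructed as a stable subalgebra of some member of $\class{K}$ and hence lies in $\class{K} \subseteq \V(L)$.
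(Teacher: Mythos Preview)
This theorem is cited from \cite{stablecanonicalrules} without proof in the present paper, so there is no in-paper argument to compare against. Your rule-system biconditional and the logic $(\Leftarrow)$ direction are correct and standard. The gap you flag in the logic $(\Rightarrow)$ direction is real, but your proposed repair cannot work: you aim to show that $\V(L)$ itself is closed under stable subalgebras and then invoke the rule-system case, yet this closure fails in general. Take $\class{K}$ to be the isomorphism class of the two-element algebra $\B_0$ with $\Dia = \mathrm{id}$; then $\class{K}$ is stable (the only stable subalgebra of $\B_0$ is $\B_0$) and $L = \Log(\class{K}) = \logic{Triv}$ is a stable logic in the paper's sense, but the four-element $\logic{S4}$-algebra $\A$ dual to a two-point reflexive chain stably embeds into any four-element $\logic{Triv}$-algebra $\B$ (every Boolean embedding is automatically stable since $\Dia_\B h(a) = h(a) \leq h(\Dia_\A a)$ by reflexivity of $\A$), while $\A \notin \V(\logic{Triv})$. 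So $\V(\logic{Triv})$ is not stable, and your $\HSP$-tracing strategy is aimed at an unreachable target.

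What does work is to bypass the claim that $\V(L)$ is stable and argue directly that the stable rules pin down $L$ as a set of formulas. Set $\S' = \S_\K + \{\rho(\A, \emp) : \A \text{ finite},\ \A \notin \class{K}\}$. Stability of $\class{K}$ gives $\class{K} \subseteq \U(\S')$, hence $\Log(\U(\S')) \subseteq L$; conversely, if some $\B \in \U(\S')$ refuted a formula $\phi \in L$, a finite filtration $\B' \inj_\emp \B$ refuting $\phi$ would lie outside $\class{K}$, making $\rho(\B', \emp)$ an axiom of $\S'$ that $\B$ refutes---a contradiction. Thus $L = \Log(\U(\S'))$, which is the operative sense of ``$L$ is axiomatizable over $\K$ by stable rules''. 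Note that the same counterexample shows $\U(\S') \subsetneq \V(L)$ can occur, so under the paper's literal definition ($\S_L = \S_\K + \R$) the logic half of the statement must be read with this looser interpretation.
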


\begin{theorem} \label{4: Thm K4m1 stable logic and stable formula}
    Let $m \geq 1$. Every $\Kff{m+1}{1}$-stable logic is axiomatizable by stable formulas over $\Kff{m+1}{1}$. 
\end{theorem}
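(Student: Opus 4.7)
The plan is to prove $L = L'$, where
\[
L' = \Kff{m+1}{1} + \{\gamma^m(\A,\emp) : \A \text{ is a finite s.i.~}\Kff{m+1}{1}\text{-algebra with } \A \not\models L\}.
\]

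For the inclusion $L \subseteq L'$, I would invoke \Cref{3: Thm scf complete} to fix an axiomatization $L = \Kff{m+1}{1} + \{\gamma^m(\A_i, D_i) : i \in I\}$ by stable canonical formulas. For each $i$, the identity $\A_i \inj_{D_i} \A_i$ together with \Cref{3: Thm scf char} yields $\A_i \not\models \gamma^m(\A_i, D_i)$, hence $\A_i \not\models L$; so $\gamma^m(\A_i, \emp)$ belongs to $L'$ by definition. Since $\A_i \inj_{D_i} \C$ implies $\A_i \inj_\emp \C$, another application of \Cref{3: Thm scf char} shows that any $\Kff{m+1}{1}$-algebra validating $\gamma^m(\A_i, \emp)$ also validates $\gamma^m(\A_i, D_i)$, so $\gamma^m(\A_i, D_i) \in L'$. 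Note that this direction does not use $\Kff{m+1}{1}$-stability of $L$.

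For the converse inclusion $L' \subseteq L$, fix a $\Kff{m+1}{1}$-stable class $\class{K}$ with $\V(L) = \V(\class{K})$ and take $\gamma^m(\A,\emp) \in L'$ with $\A$ finite s.i.~and $\A \not\models L$. If $\gamma^m(\A,\emp) \notin L$, then \Cref{3: Thm scf char} produces an s.i.~$L$-algebra $\C$ with $\A \inj_\emp \C$, and the goal reduces to deriving the contradiction $\A \models L$. The strategy is to transport the stable embedding $\A \inj_\emp \C$ into a stable embedding $\A \inj_\emp \B$ for some $\B \in \class{K}$, whereupon $\Kff{m+1}{1}$-stability of $\class{K}$ forces $\A \in \class{K} \subseteq \V(L)$. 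Since varieties of modal algebras are congruence-distributive, Jónsson's theorem presents $\C$ as a homomorphic image of a subalgebra $\D$ of an ultraproduct $\prod_U \B_j$ with each $\B_j \in \class{K}$. One then lifts $\A \inj_\emp \C$ step-by-step: first to a stable embedding $\A \inj_\emp \D$, then to $\A \inj_\emp \prod_U \B_j$, and finally, exploiting the finiteness of $\A$, to $\A \inj_\emp \B_{j_0}$ for some $j_0$ via \L{}o\'s's theorem applied to the first-order statement asserting the existence of a stable embedding from the finite algebra $\A$.

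The main obstacle is the first lifting step, from $\C$ back to the subalgebra $\D$. A stable embedding involves only the one-sided inequality $\Dia h(a) \leq h(\Dia a)$, which need not survive when Boolean representatives in $\D$ of the image of $\A$ in $\C$ are chosen haphazardly: an arbitrary Boolean section of $\D \surj \C$ typically gives a Boolean embedding that fails stability. Dually, the task is to extend the stable surjection $\C_* \surj_\emp \A_*$, which lives on the closed upset $\C_* \inj \D_*$, to a continuous stable surjection on all of $\D_*$. Here the pre-transitivity axiom $\Dia^{m+1} p \to \Dia p$ is expected to play the decisive role: it bounds arbitrary $R$-paths by ones of length at most $m$, constraining how points outside $\C_*$ may be routed back into $\A_*$ while keeping the extension both stable and continuous. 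Carrying out this construction compatibly with the ambient topology on $\D_*$ and the $\emp$-CDC requirement is the delicate technical heart of the proof.
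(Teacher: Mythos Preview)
The paper does not itself prove this theorem; it is cited from \cite{takahashi2025stablecanonicalrulesformulas}. However, the paper does prove the closely related \Cref{4: Lem finitely K4m1 stable}, and your inclusion $L' \subseteq L$ reduces precisely to that lemma: once you have an s.i.\ $L$-algebra $\C$ with $\A \inj_\emp \C$, concluding $\A \models L$ is exactly the statement that $\V(L)\si$ is finitely $\Kff{m+1}{1}$-stable.

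Your proposed route to this via J\'onsson's lemma has a genuine obstruction at the $\H$-step, not merely a technical difficulty. Lifting a stable embedding $\A \inj_\emp \C$ along a surjection $\D \surj \C$ is, dually, extending a stable surjection $\C_* \surj_\emp \A_*$ from a closed upset $\C_* \inj \D_*$ to all of $\D_*$. This is in general impossible: take $\A_*$ to be a single irreflexive point, $\C_* = \A_*$, and $\D_*$ a two-point chain with $\C_*$ as its top; any extension must send the root of $\D_*$ to the unique point of $\A_*$, violating stability. Pre-transitivity does not save you here, since all spaces involved are $\Kff{m+1}{1}$-spaces. So the ``delicate technical heart'' you describe cannot be carried out as stated.

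The fix, visible in the proof of \Cref{4: Lem finitely K4m1 stable}, is to abandon the attempt to lift $\A$ itself. First use definable filtration to replace the generating class $\class{K}$ by $\class{K}\fin$. Then, given $\A \inj_\emp \C'$ with $\C'$ an s.i.\ homomorphic image of some $\C \in \class{K}\fin$, enlarge $\A_*$ by adjoining a fresh reflexive root $s_0$ seeing everything, obtaining $\F'$; extend the stable surjection $\C'_* \surj_\emp \A_*$ to $\C_* \to \F'$ by sending all points outside $\C'_*$ to $s_0$. This is automatically stable, so the dual $\A'$ of $\F'$ satisfies $\A' \inj_\emp \C$, whence $\A' \in \class{K}$ by stability of $\class{K}$, and $\A$, being a homomorphic image of $\A'$, lies in $\V(L)$. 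Your direction $L \subseteq L'$ is fine as written.
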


Note that the converse of \Cref{4: Thm K4m1 stable logic and stable formula} does not hold. A logic axiomatized by a stable formula over $\Kf$ that is not $\Kf$-stable can be found in \cite[Example 4.11]{stablemodallogic}.

If we take $D$ to be the entire set $A$ in a stable canonical rule or formula, then its semantic characterization becomes similar to that of \emph{Jankov formulas} \cite{jankov1963,dejongh1968}. This motivates the following terminology. We call a stable canonical rule of the form $\rho(\A, A)$ a \emph{Jankov rule} and a stable canonical formula of the form $\gamma^m(\A, A)$ or $\epsilon(\A, A)$ a \emph{Jankov formula}. The notion of \emph{splittings} and \emph{union-splittings} comes from lattice theory, and they have been an important tool in the study of lattices of logics (see \cite[Section 10.7]{czModalLogic1997} for a historical overview). It turns out that Jankov rules and Jankov formulas axiomatize exactly union-splittings in corresponding lattices. 

Recall that in a complete lattice $X$, a \emph{splitting pair} of $X$ is a pair $(x, y)$ of elements of $X$ such that $x \not\leq y$ and for any $z \in X$, either $x \leq z$ or $z \leq y$. If $(x, y)$ is a splitting pair of $X$, we say that $y$ \emph{splits} $X$ and $x$ is a \emph{splitting} in $X$. We call $z$ a \emph{union-splitting} in $X$ if $z$ is the join of a set of splittings in $X$. Since the lattices $\NExt{\S}$ for a rule system $\S$ and $\NExt{L}$ for a logic $L$ are complete, these notions apply. The following theorems are from \cite{stablecanonicalrules}, \cite{blok1978degree}, and \cite{takahashi2025stablecanonicalrulesformulas} in order.

\begin{theorem} \label{4: Thm splitting rule system and Jankov rule}
    A rule system $\S$ is a splitting (resp. a union-splitting) in $\NExt{\S_\K}$ iff $\S$ is axiomatizable over $\S_\K$ by a Jankov rule (resp. Jankov rules). 
\end{theorem}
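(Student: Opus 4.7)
My plan hinges on the special semantic content of Jankov rules. By \Cref{3: Thm sc rule char} with $D = A$, a stable embedding satisfying CDC for every element of $A$ preserves $\Dia$ exactly and is therefore an ordinary modal algebra embedding; hence $\B \not\models \rho(\A, A)$ iff $\A$ modally embeds in $\B$. A Jankov rule thus encodes the ``forbidden modal subalgebra'' condition. Since joins in $\NExt{\S_\K}$ correspond to taking the union of axioms, any join of rule systems of the form $\S_\K + \rho(\A_i, A_i)$ equals $\S_\K + \{\rho(\A_i, A_i) : i \in I\}$; this immediately reduces the union-splitting equivalence to the splitting equivalence, which I treat next.

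For the $(\Leftarrow)$ direction, let $\A$ be a finite modal algebra, set $\S = \S_\K + \rho(\A, A)$, and define the splitting companion $\S'$ to be the rule-theory of the universal class generated by $\A$. Since $\A$ itself refutes $\rho(\A, A)$ via the identity embedding, $\rho(\A, A) \notin \S'$, so $\S \not\subseteq \S'$. Conversely, for any $\S'' \in \NExt{\S_\K}$ with $\S \not\subseteq \S''$, some $\B \in \U(\S'')$ refutes $\rho(\A, A)$, meaning $\A$ modally embeds in $\B$; since universal classes are closed under subalgebras, $\A \in \U(\S'')$, so the universal class generated by $\A$ is contained in $\U(\S'')$, and hence $\S'' \subseteq \S'$. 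Thus $(\S, \S')$ is a splitting pair.

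For the $(\Rightarrow)$ direction, let $(\S, \S')$ be a splitting pair and pick any $\rho_0 \in \S \setminus \S'$. Since $\S_\K$ admits definable filtration, \Cref{3: Thm scr complete} gives stable canonical rules $\rho(\A_1, D_1), \ldots, \rho(\A_n, D_n)$ with finite $\A_i$ that jointly axiomatize $\rho_0$ over $\S_\K$. Applying the splitting property to each $\S_\K + \rho(\A_i, D_i) \subseteq \S$, at least one index $j$ must satisfy $\rho(\A_j, D_j) \notin \S'$ (otherwise $\rho_0 \in \S'$), and then the splitting condition forces $\S = \S_\K + \rho(\A_j, D_j)$. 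Since every modal embedding is automatically a stable embedding with CDC for every element, the rule $\rho(\A_j, A_j)$ is derivable from $\rho(\A_j, D_j)$ over $\S_\K$, whence $\rho(\A_j, A_j) \in \S$; applying the splitting condition once more to $\S_\K + \rho(\A_j, A_j)$, it remains only to prove $\rho(\A_j, A_j) \notin \S'$.

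The main obstacle is exactly this last step: a witness $\B \models \S'$ refuting $\rho(\A_j, D_j)$ only yields a stable embedding $\A_j \inj_{D_j} \B$, which is strictly weaker than a modal embedding. The plan is to extract from such a $\B$ a finite modal algebra $\A \in \U(\S') \setminus \U(\S)$ and then apply the $(\Leftarrow)$ direction already proved to conclude $\S = \S_\K + \rho(\A, A)$. Since $\U(\S')$ is closed under subalgebras, I would construct $\A$ as a finite modal subalgebra of the finitely generated modal subalgebra $\C$ of $\B$ generated by the image of $\A_j$, by taking a definable filtration of $\C$ with respect to a valuation witnessing the failure of some rule in $\S$. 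The delicate point is ensuring that the finite filtrate lies in $\U(\S')$ as a genuine modal subalgebra (not merely a stable subalgebra of $\C$) while still refuting the relevant rule of $\S$; this is where the technical heart of the argument lies.
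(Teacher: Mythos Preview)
The paper does not prove this theorem; it is quoted from \cite{stablecanonicalrules} as a preliminary fact. Your reduction of union-splittings to splittings and your argument for $(\Leftarrow)$ are correct and standard, as is your $(\Rightarrow)$ argument up to the point where you obtain $\S = \S_\K + \rho(\A_j, D_j)$ and observe that $\rho(\A_j, A_j) \in \S$.

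The gap is in the final step, and your proposed fix through filtration is both unnecessary and, as you yourself note, blocked: a definable filtration yields only a \emph{stable} subalgebra, so there is no reason for it to remain in $\U(\S')$. The missing observation is much simpler: the algebra $\A_j$ itself already witnesses $\rho(\A_j, A_j) \notin \S'$. Indeed, the identity map shows $\A_j \not\models \rho(\A_j, D_j)$, so $\A_j \not\models \S$; applying the splitting dichotomy to the rule-theory of the single algebra $\A_j$ then forces $\A_j \models \S'$. Since the identity is also a full modal embedding, $\A_j \not\models \rho(\A_j, A_j)$, whence $\rho(\A_j, A_j) \notin \S'$. One more application of the splitting dichotomy to $\S_\K + \rho(\A_j, A_j)$ now gives $\S = \S_\K + \rho(\A_j, A_j)$, completing the proof without any filtration.
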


\begin{theorem} \label{4: Thm K splitting and Jankov formula}
    A logic $L$ is a splitting (resp. a union-splitting) in $\NExt{\K}$ iff $L$ is axiomatizable over $\K$ by a Jankov formula (resp. Jankov formulas) of the form $\epsilon(\A, A)$. 
\end{theorem}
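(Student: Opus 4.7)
The plan is to handle the two directions of the iff separately, dealing with splittings first and then extending to union-splittings by decomposition. The right-to-left direction follows directly from the semantic characterization \Cref{3: Thm splitting fml char} of stable canonical formulas, while the left-to-right direction requires the substantive content of Blok's original argument.

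For right-to-left, suppose $L = \K + \epsilon(\A, A)$ with $\A$ a finite s.i.~modal algebra of finite height. I would show that $(L, \Log(\A))$ is a splitting pair. First, $L \not\subseteq \Log(\A)$: the identity witnesses $\A \inj_A \A$ (with $\A$ itself playing the role of the s.i.~image), so \Cref{3: Thm splitting fml char} gives $\A \not\models \epsilon(\A, A)$, while $\epsilon(\A, A) \in L$. Next, let $L''$ be any logic with $L'' \not\subseteq \Log(\A)$. Then some formula of $L''$ is refuted on $\A$, so $\A \notin \V(L'')$. If in addition $\epsilon(\A, A) \notin L''$, then \Cref{3: Thm splitting fml char} supplies $\B \in \V(L'')$ having an s.i.~homomorphic image $\C$ with $\A \inj_A \C$; hence $\C \in \V(L'')$ (as a homomorphic image of $\B$) and therefore $\A \in \V(L'')$ (as a subalgebra of $\C$), a contradiction. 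Thus $L \subseteq L''$, completing the splitting condition. The union-splitting case is immediate: if $L$ is axiomatized by $\{\epsilon(\A_i, A_i) : i \in I\}$, then $L = \bigvee_{i \in I}(\K + \epsilon(\A_i, A_i))$ displays $L$ as a join of splittings.

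For left-to-right, suppose $L$ is a splitting with partner $L^*$. I would first observe that $\Log(\B) = L^*$ for every $\B \in \V(L^*) \setminus \V(L)$: since $\B \not\models L$, we have $L \not\subseteq \Log(\B)$, so the splitting condition forces $\Log(\B) \subseteq L^*$, which together with $\B \models L^*$ gives equality. As $L \not\subseteq L^*$ entails $\V(L^*) \not\subseteq \V(L)$, Birkhoff's subdirect representation theorem yields an s.i.~$\B_0 \in \V(L^*) \setminus \V(L)$, and by the above $\V(\B_0) = \V(L^*)$. The crux, and the main obstacle, is to reduce the possibly infinite $\B_0$ to a finite s.i.~algebra $\A$ of finite height that still generates $\V(L^*)$: this is the deep content of Blok's original theorem, proceeding via a structural analysis of the subvariety lattice below $\V(L^*)$ that exploits the maximality of $L^*$ to extract a finite, finite-height witness. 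Once such $\A$ is obtained, \Cref{3: Thm splitting fml char} combined with the splitting pair force $L = \K + \epsilon(\A, A)$. For a union-splitting $L = \bigvee_{i \in I} L_i$ with each $L_i$ a splitting, applying this to each $L_i$ yields the axiomatization by $\{\epsilon(\A_i, A_i) : i \in I\}$. The finite-height hypothesis is indispensable because $\epsilon(\A, A)$ is defined only for finite s.i.~algebras of finite height.
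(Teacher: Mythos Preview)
The paper does not supply its own proof of this statement: it is listed in the preliminaries and attributed to Blok \cite{blok1978degree}. So there is no ``paper's proof'' to compare against in the strict sense.

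That said, your right-to-left argument is correct and matches exactly the splitting-pair verification the paper carries out later in the more general setting of \Cref{4: Thm characterization of ss splitting logics} (the final paragraph there shows $(L + \epsilon(\A,A), \Log(\A))$ is a splitting pair by the same use of \Cref{3: Thm splitting fml char}). Your left-to-right direction correctly isolates the non-trivial content --- producing a \emph{finite} s.i.\ algebra of \emph{finite height} generating $\V(L^*)$ --- and correctly attributes it to Blok's structural analysis rather than attempting to reprove it; this is precisely the stance the paper takes. One small refinement: once you have a finite s.i.\ $\A$ of finite height with $\V(\A) = \V(L^*)$, the equality $L = \K + \epsilon(\A,A)$ follows because by the right-to-left direction $(\K + \epsilon(\A,A), \Log(\A))$ is a splitting pair with the same second component $L^* = \Log(\A)$, and the first component of a splitting pair is determined by the second.
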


\begin{theorem} \label{4: Thm K4m1 splitting and Jankov formula}
    Let $m \geq 1$. A logic $L \in \NExt{\Kff{m+1}{1}}$ is a splitting (resp. a union-splitting) in $\NExt{\Kff{m+1}{1}}$ iff $L$ is axiomatizable over $\Kff{m+1}{1}$ by a Jankov formula (resp. Jankov formulas) of the form $\gamma^m(\A, A)$. 
\end{theorem}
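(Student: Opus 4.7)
The plan is to first establish the splitting equivalence and then derive the union-splitting equivalence as a formal consequence of closure under joins.

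For the $(\Leftarrow)$ direction of the splitting case, fix a finite s.i.~$\Kff{m+1}{1}$-algebra $\A$ and set $L := \Kff{m+1}{1} + \gamma^m(\A, A)$ and $L^c := \Log(\V(\A))$, viewed in $\NExt{\Kff{m+1}{1}}$. The key observation is that when $D = A$, a stable embedding satisfying CDC for $D$ is just a modal algebra embedding, so \Cref{3: Thm scf char} simplifies to: $\B \not\models \gamma^m(\A, A)$ iff $\A$ embeds, as a modal algebra, into some s.i.~homomorphic image of $\B$. To verify the splitting property, I take any $L'' \in \NExt{\Kff{m+1}{1}}$ and check that $L \subseteq L''$ iff $L'' \not\subseteq L^c$. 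If $L'' \subseteq L^c$, then $\A$ is itself an $L''$-algebra, and the identity embedding $\A \inj \A$ shows $\A \not\models \gamma^m(\A, A)$, giving $L \not\subseteq L''$. Conversely, if $L \not\subseteq L''$, some $L''$-algebra refutes $\gamma^m(\A, A)$, producing a s.i.~homomorphic image into which $\A$ embeds; closure of $\V(L'')$ under $\H$ and $\S$ then forces $\A \in \V(L'')$, i.e.~$L'' \subseteq L^c$. Non-triviality ($L \not\subseteq L^c$) follows from the same identity embedding, so $(L, L^c)$ is a genuine splitting pair.

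For the $(\Rightarrow)$ direction, suppose $L$ is a splitting with companion $L^c$. By \Cref{3: Thm scf complete}, $L$ is axiomatized over $\Kff{m+1}{1}$ by a family $\{\gamma^m(\A_i, D_i)\}_{i \in I}$ of stable canonical formulas. Applying the splitting dichotomy to each logic $L_i := \Kff{m+1}{1} + \gamma^m(\A_i, D_i)$, which is contained in $L$, I would argue that at least one $\gamma^m(\A_{i_0}, D_{i_0})$ must fall outside $L^c$ so as to separate $L$ from $L^c$. The splitting companion $L^c$, being a $\vee$-irreducible element of $\NExt{\Kff{m+1}{1}}$, is generated by a single finite s.i.~algebra $\A$; from the refutation of $\gamma^m(\A_{i_0}, D_{i_0})$ in the largest $L^c$-algebra one extracts such an $\A$ containing $\A_{i_0}$ as a suitable substructure. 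I would then verify via \Cref{3: Thm scf char} that $L = \Kff{m+1}{1} + \gamma^m(\A, A)$: one direction from the splitting property, the other from the fact that any algebra refuting $\gamma^m(\A, A)$ has an s.i.~quotient containing $\A$ as a subalgebra, hence cannot be an $L^c$-algebra, hence must satisfy the splitting above $L$.

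The union-splitting equivalence follows by taking joins. If $L = \bigvee_{i \in I} L_i$ with each $L_i$ a splitting, the splitting case gives $L_i = \Kff{m+1}{1} + \gamma^m(\A_i, A_i)$, so $L$ is axiomatized by $\{\gamma^m(\A_i, A_i) : i \in I\}$ over $\Kff{m+1}{1}$; conversely, any logic axiomatized by such Jankov formulas is visibly the join of the corresponding single-formula splittings. The main obstacle is the $(\Rightarrow)$ direction of the splitting case: one must isolate the finite s.i.~``splitting algebra'' canonically attached to the abstract splitting companion $L^c$, and strengthen the closed domain from a proper subset $D_{i_0}$ to the full algebra $A$. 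Pre-transitivity of $\Kff{m+1}{1}$ is what makes this possible, since it ensures that $\gamma^m(\A, A)$ behaves like a proper algebra-based characteristic formula in the sense of Jankov.
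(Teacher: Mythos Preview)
The paper does not prove this theorem; it is stated in the preliminaries and attributed to \cite{takahashi2025stablecanonicalrulesformulas} (see the sentence preceding \Cref{4: Thm splitting rule system and Jankov rule}). So there is no proof in the paper to compare against, and your proposal must be judged on its own.

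Your $(\Leftarrow)$ direction and the derivation of the union-splitting case from the splitting case are correct and standard. The argument that $(\Kff{m+1}{1} + \gamma^m(\A,A),\Log(\A))$ is a splitting pair is exactly the Jankov-style argument one expects, and your use of \Cref{3: Thm scf char} together with the observation that CDC for the whole carrier collapses to an honest embedding is right.

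The $(\Rightarrow)$ direction, however, contains a genuine error and a genuine gap. First, the splitting companion $L^c$ is completely \emph{meet}-irreducible in $\NExt{\Kff{m+1}{1}}$, not $\vee$-irreducible; dually, $\V(L^c)$ is completely join-irreducible in the subvariety lattice, and it is this that gives $\V(L^c)=\V(\A)$ for a single s.i.\ algebra $\A$. Second, and more seriously, nothing in your outline establishes that this $\A$ can be taken to be \emph{finite}. Complete join-irreducibility of $\V(L^c)$ only produces an s.i.\ generator, not a finite one; the phrase ``the largest $L^c$-algebra'' is not meaningful, and the sentence about extracting $\A$ from a refutation does not supply a mechanism for finiteness. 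This is exactly the non-trivial point: one needs an argument (typically via the fmp or definable filtration of $\Kff{m+1}{1}$, in the spirit of McKenzie's splitting theorem or Rautenberg's results) that any s.i.\ algebra splitting the lattice over a base with the fmp must be finite. Your final remark that ``pre-transitivity is what makes this possible'' gestures at the right ingredient but does not indicate how it is used. Without this step, you have only shown $L=\Kff{m+1}{1}+\gamma^m(\A_{i_0},D_{i_0})$ for some finite $\A_{i_0}$ and some $D_{i_0}\subseteq A_{i_0}$, which is weaker than the claimed Jankov-formula axiomatization.
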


\section{The Subdivision Construction} \label{Sec 3}

In this section, we focus on the combinatorics of modal spaces and introduce the \emph{Subdivision Construction}. Our goal is to prove the main lemma, \Cref{4: Lem main}. The usefulness and applications of the Subdivision Construction and the main lemma will become clear in the subsequent sections.

\subsection{The rank function}

We begin by introducing a measure that will be used in our inductive construction. For a modal space $\X$ and $x \in X$, we define $\r{x}$, the \emph{rank} of $x$ (in $\X$), to be the length of the longest path starting at $x$: formally, $\r{x} = \sup \{n \geq 1: \X, x \not\models \Boxx{n-1} \bot\}$. For example, $\r{x} = 1$ iff $x$ is a dead end, and $\r{x} = \omega$ iff there is an arbitrarily long path starting at $x$. It is easy to see that the rank $<\kappa$ ($\kappa \leq \omega$) part of a modal space is an upset, and the rank is non-decreasing under stable maps.

\begin{lemma}
    Let $\X = (X, R)$ be a modal space and $\kappa \leq \omega$. Then the rank $< \kappa$ part of $\X$ is an upset of $\X$.
\end{lemma}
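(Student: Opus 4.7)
The plan is to reduce the lemma to the sharper statement that whenever $xRy$ and $\r{x} < \omega$, we have $\r{y} < \r{x}$. Since the hypothesis $\kappa \leq \omega$ forces any $\r{x} < \kappa$ to be a natural number, this immediately gives $\r{y} < \r{x} < \kappa$ by transitivity, so the rank $<\kappa$ part is closed under $R$-successors and hence an upset.

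To establish the sharper statement, I would first observe that the set $S_x := \{n \in \omega : \X, x \not\models \Boxx{n-1}\bot\}$ is downward closed among positive integers: a witness $xR^{n-1}z$ can be truncated along the same path to a witness $xR^{n-2}z'$. Moreover $1 \in S_x$ always, since $\Box^0\bot = \bot$ is refuted at every point. So if $\r{x} = \sup S_x = k$ is finite, then $k$ is actually the maximum of $S_x$, whence $k+1 \notin S_x$, i.e., $\X, x \models \Boxx{k}\bot$.

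Now suppose $xRy$ and, for contradiction, $\r{y} \geq k$. Then some $n \geq k$ lies in $S_y$, and by downward closure $k \in S_y$, producing some $z$ with $yR^{k-1}z$. Prepending the edge $xRy$ yields $xR^k z$, contradicting $\X, x \models \Boxx{k}\bot$. Hence $\r{y} < k = \r{x}$, as required. There is no real combinatorial obstacle here: the only subtle point is verifying that the supremum in the definition of rank is attained whenever it is finite, after which the rest is a one-line path-prepending argument.
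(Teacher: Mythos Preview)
Your proof is correct and follows essentially the same path-prepending idea as the paper: if $xRy$, any path from $y$ extends to a strictly longer path from $x$, forcing $\r{y} < \r{x}$ when the latter is finite. The paper compresses this into a single sentence using the informal ``longest path'' description of rank, whereas you unpack the formal definition via $\Boxx{n-1}\bot$ and verify that the supremum is attained; this extra care is sound but not a genuinely different approach.
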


\begin{proof}
    Let $x, y \in \X$ such that $xRy$ and $\r{x} < \kappa$. For any finite path starting at $y$, adding $x$ at the beginning results in a longer path starting at $x$, so $\r{y} < \r{x}  < \kappa$. Thus, the rank $< \kappa$ part of $\X$ is an upset of $\X$.
\end{proof}

\begin{lemma} \label{4: Lem stable map rank non-decrease}
    Let $\X = (X, R)$ and $\Y = (Y, Q)$ be modal spaces and $f: \X \to \Y$ be a stable map. Then, $\r{x} \leq \r{f(x)}$ for any $x \in \X$. 
\end{lemma}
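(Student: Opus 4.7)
The plan is to reduce the claim to the observation that stable maps preserve finite $R$-paths, and then apply this pointwise inside the definition of rank. Concretely, stability of $f$ says $xRy \Rightarrow f(x)Qf(y)$, which by an immediate induction on $k$ gives $f(R^{k}[x]) \subseteq Q^{k}[f(x)]$ for every $k \in \omega$. In particular, if $R^{k}[x] \neq \emptyset$ then $Q^{k}[f(x)] \neq \emptyset$.

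Translating this back through the modal characterization of rank, it means that $\X, x \not\models \Box^{k}\bot$ implies $\Y, f(x) \not\models \Box^{k}\bot$ for every $k \in \omega$: from $\X, x \not\models \Box^{k}\bot$ we get some $y \in R^{k}[x]$, and then $f(y) \in Q^{k}[f(x)]$ witnesses $\Y, f(x) \not\models \Box^{k}\bot$. Thus
\[
\{n \in \omega : \X, x \not\models \Boxx{n-1}\bot\} \;\subseteq\; \{n \in \omega : \Y, f(x) \not\models \Boxx{n-1}\bot\},
\]
and taking suprema of both sides yields $\r{x} \leq \r{f(x)}$. The case $\r{x} = \omega$ is covered uniformly, since the inclusion of sets being preserved forces the supremum on the right to also be $\omega$.

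I do not anticipate any real obstacle: the continuity of $f$ plays no role here (rank is defined purely in terms of $R$-paths and involves no topology), and $Y$ being Hausdorff or $R[\cdot]$ being closed is irrelevant. The only mildly non-trivial step is the iterated use of stability, which is a one-line induction.
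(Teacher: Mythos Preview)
Your proposal is correct and is essentially the same argument as the paper's: both observe that stability sends an $R$-path $x = x_0 R \cdots R x_{k}$ to a $Q$-path $f(x_0) Q \cdots Q f(x_k)$, so any path length realized at $x$ is realized at $f(x)$. Your version is slightly more careful in that it handles $\r{x} = \omega$ uniformly via the set inclusion and supremum, whereas the paper only writes out the finite case explicitly.
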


\begin{proof}
    Let $f: \X \to \Y$ be a stable map and $x \in \X$. Suppose that $\r{x} = n$. Then there is a path $x=x_0 R \cdots R x_{n-1}$ in $\X$. Applying the stable map $f$, we obtain the path $f(x)=f(x_0) Q \cdots Q f(x_{n-1})$ in $\Y$. Thus, there is a path of length $n$ starting at $f(x)$, and we conclude $\r{f(x)} \geq n$. Suppose that $\r{x} = \omega$. Then there is a path of length $n$ starting at $x$ for any $n \in \omega$. Applying the same argument as above, we obtain a path of length $n$ starting at $f(x)$ for any $n \in \omega$, and we conclude $\r{f(x)} = \omega$.
\end{proof}

A \emph{cycle} in a modal space $(X, R)$ is a non-empty set $\{x_0, \dots, x_n\}$ of points such that $x_i R x_{i+1}$ for $0 \leq i \leq n$ where we regard $x_{n+1}$ as $x_0$. A modal space $\X$ is \emph{cycle-free} if there is no cycle in $\X$. Note that a cycle-free modal space contains no reflexive point. For finite modal spaces, being cycle-free admits a useful characterization via the rank function.

\begin{proposition} \label{4: Prop cyclefree char}
    Let $\X = (X, R)$ be a finite modal space. The following are equivalent.
    \begin{enumerate}
        \item $\X$ is cycle-free,
        \item $\r{x} < \omega$ for all $x \in \X$,
        \item there is some $N \in \omega$ such that $\r{x} \leq N$ for all $x \in \X$.
    \end{enumerate}
\end{proposition}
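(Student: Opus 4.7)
The plan is a cycle of implications $(3) \Rightarrow (2) \Rightarrow (1) \Rightarrow (3)$, with the finiteness of $X$ used in the second and third steps. The implication $(3) \Rightarrow (2)$ is immediate, since $\r{x} \leq N < \omega$ forces $\r{x} \in \omega$.

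For $(2) \Rightarrow (1)$, I would argue the contrapositive. Suppose $\{x_0, \ldots, x_n\}$ is a cycle with $x_i R x_{i+1}$ (indices modulo $n+1$). Traversing the cycle $k$ times from $x_0$ produces a path of $k(n+1)$ edges, so $x_0 \not\models \Box^m \bot$ for every $m \in \omega$. By the definition $\r{x_0} = \sup\{n \in \omega : x_0 \not\models \Box^{n-1}\bot\}$, this gives $\r{x_0} = \omega$, contradicting (2). For $(1) \Rightarrow (3)$, again by contrapositive: if (3) fails, the finite set $\{\r{x} : x \in X\} \subseteq \omega \cup \{\omega\}$ is unbounded in $\omega$, so it must contain $\omega$; pick $x_0$ with $\r{x_0} = \omega$. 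The definition of rank then yields a path $x_0 R x_1 R \cdots R x_{|X|}$ using $|X| + 1$ vertices, and pigeonhole gives indices $i < j \leq |X|$ with $x_i = x_j$. Choosing such $(i, j)$ with $j - i$ minimal makes $x_i, \ldots, x_{j-1}$ pairwise distinct, so these points, together with the transitions $x_k R x_{k+1}$ for $i \leq k < j-1$ and $x_{j-1} R x_j = x_i$, form a cycle in $\X$, contradicting (1).

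The main obstacle is essentially notational: one has to match the index convention in the definition of $\r{x}$ (via $\Box^{n-1}\bot$) to the informal idea of ``length of the longest path starting at $x$''. No deeper combinatorics is required; the content reduces to the pigeonhole principle applied to a path with more than $|X|$ vertices, together with the observation that a cycle enables walks of any length.
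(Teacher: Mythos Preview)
Your proof is correct and follows essentially the same cycle of implications as the paper, with the same pigeonhole argument for $(1)\Rightarrow(3)$ and the same observation that cycles produce arbitrarily long paths for $(2)\Rightarrow(1)$. The only cosmetic difference is that for $(1)\Rightarrow(3)$ the paper goes directly to a point of rank $>|X|$ rather than first arguing some rank equals $\omega$, and you are slightly more careful than the paper in extracting a cycle with distinct vertices.
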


\begin{proof}
    If $\X$ has a cycle, then any point in the cycle has rank $\omega$, so $(2) \Rightarrow (1)$ follows. If $\r{x} = \omega$ for some $x \in \X$, then there is no $N \in \omega$ such that $\r{x} \leq N$, so $(3) \Rightarrow (2)$ follows. Finally, suppose that there is no $N \in \omega$ such that $\r{x} \leq N$ for all $x \in \X$. So, there is some $x \in \X$ such that $\r{x} > |X|$. Then there is a path of length $>|X|$ (starting at $x$) in $\X$. Such a path must contain two identical points, which implies that $\X$ has a cycle. This shows $(1) \Rightarrow (3)$.
\end{proof}

\subsection{The Subdivision Construction: intuition}

The idea of the Subdivision Construction is as follows. Suppose that we want to show the fmp of a logic $L$. Let $\phi \notin L$ and $\X$ be an $L$-space that refutes $\phi$. According to the theory of stable canonical rules, we may assume $\phi$ to be a stable canonical rule $\rho(\F, \D)$, and thus there is a stable map $f: \X \surj_\D \F$. Using the information of $f$, we construct a finite modal space $\F'$ out of $\F$ such that $\F'$ retains more of the structure of $\X$ and $\F' \surj_\D \F$. This is done by dividing the preimage under $f$ of points in $\F$ in an appropriate way, which is why we name the construction \emph{Subdivision}. The output $\F'$ will be a finite $L$-space refuting $\phi$. Before introducing the construction in its full details, we present a simple example that illustrates how the construction works and the intuition behind it. 

\begin{example} \label{Ex subdivision}

Let $\F_1$ be the following non-transitive modal space.
\begin{figure}[H]
    \centering
    \begin{tikzpicture}[scale=1]
        \node[label=right:{\(a\)}] (1) at (0,0) {\(\bullet\)};
        \node[label=right:{\(b\)}] (2) at (0,1) {\(\bullet\)};
        \node[label=right:{\(c\)}] (3) at (0,2) {\(\bullet\)};
        \draw[->] (1) -- (2);
        \draw[->] (2) -- (3);
    \end{tikzpicture}
\end{figure}

Let $\X = (X, R)$ be a modal space and $f_1: \X \surj_\emp \F_1$ be a stable map. All points in $f_1^{-1}(c)$ are dead ends, whereas there are two types of points in $f_1^{-1}(b)$: those that see some point in $f_1^{-1}(c)$ and those that are dead ends. However, this distinction is lost when applying $f_1$. We recover this information by letting $f_1$ factor through the following modal space $\F_2$.

\begin{figure}[H]
    \centering
    \begin{tikzpicture}[scale=1]
        \node[label=right:{\(a\)}] (1) at (0,-0.5) {\(\bullet\)};
        \node[label=right:{\(b_1\)}] (2) at (0,1) {\(\bullet\)};
        \node[label=right:{\(b_2\)}] (21) at (1,1) {\(\bullet\)};
        \node[label=right:{\(c\)}] (3) at (0,2) {\(\bullet\)};
        \draw[->] (1) -- (2);
        \draw[->] (1) -- (21);
        \draw[->] (2) -- (3);
    \end{tikzpicture}
\end{figure}

Define the stable map $f_2: \X \surj_\emp \F_2$ by $f_2(x) = b_1$ if $x \in f_1^{-1}(b)$ and $x$ sees some point in $f_1^{-1}(c)$, $f_2(x) = b_2$ if $x \in f_1^{-1}(b)$ and $x$ is a dead end, and $f_2(x) = f_1(x)$ otherwise. One can verify that $f_2$ is continuous, surjective (assuming that both types of points exist in $f_1^{-1}(b)$), and stable. Also, there is a canonical stable map $g_2: \F_2 \surj_\emp \F_1$, identifying $b_1$ and $b_2$ as $b$. Indeed, we have $f_1 = g_2 \circ f_2$. Moreover, all points in $f_2^{-1}(b_1)$ see some points in $f_2^{-1}(c)$ and all points in $f_2^{-1}(b_2)$ are dead ends. In this sense, we recover the information that was lost when applying $f_1$ directly.

Next, we proceed with $f_2^{-1}(a)$ similarly. Formally, there are eight types of points in $f_2^{-1}(a)$, depending on whether they see some point in $f_2^{-1}(c)$, $f_2^{-1}(b_1)$, and $f_2^{-1}(b_2)$. But since $f_2$ is stable, a point in $f_2^{-1}(a)$ cannot see any point in $f_2^{-1}(c)$ as $a$ does not see $c$. So, only four types are realizable. Assuming that all four types of points exist in $f_2^{-1}(a)$, we may consider the following modal space $\F_3$ and construct a similar factorization of $f_2$ as $g_3 \circ f_3$.

\begin{figure}[H]
    \centering
    \begin{tikzpicture}[scale=1]
        \node[label=right:{\(a_1\)}] (11) at (0,-0.5) {\(\bullet\)};
        \node[label=right:{\(a_2\)}] (12) at (1,-0.5) {\(\bullet\)};
        \node[label=right:{\(a_3\)}] (13) at (2,-0.5) {\(\bullet\)};
        \node[label=right:{\(a_4\)}] (14) at (3,-0.5) {\(\bullet\)};
        \node[label=right:{\(b_1\)}] (21) at (0,1) {\(\bullet\)};
        \node[label=right:{\(b_2\)}] (22) at (1,1) {\(\bullet\)};
        \node[label=right:{\(c\)}] (3) at (0,2) {\(\bullet\)};
        \draw[->] (11) -- (21);
        \draw[->] (11) -- (22);
        \draw[->] (12) -- (21);
        \draw[->] (13) -- (22);
        \draw[->] (21) -- (3);
    \end{tikzpicture}
\end{figure}

The finite modal space $\F_3$ and the maps $f_3: \X \surj_\emp \F_3$ and $g_3: \F_3 \surj_\emp \F_1$ are the results of the Subdivision Construction applied to the map $f_1: \X \surj_\emp \F_1$. One can verify from the construction that $f_3$ is in fact a p-morphism. Thus, $\F_3$ respects the structure of $\X$ more than $\F_1$ does.
\end{example}

This example raises two possible concerns about the Subdivision Construction. First, not all types may be realized in a preimage. For example, there might be no dead end in $f_2^{-1}(a)$, so $f_3$ might not be surjective. This issue can be overcome by adding points only when points of the corresponding type exist in the preimage. One may also wonder what happens if we take the closed domain $\D$ into account. As we will see in the following, the CDC works perfectly with this construction. 

Moreover, it should be clear from this example that the Subdivision Construction applies practically only to the cycle-free part, in particular, the irreflexive part, of a modal space. If $a$ is reflexive, then points in $f_2^{-1}(a)$ can see each other, and there is no appropriate way to define the relation in $\F_3$ on $\{a_1, a_2, a_3, a_4\}$. 

\subsection{The Subdivision Construction: formal definition}

\begin{definition}
    Let $\X = (X, R)$ and $\Y = (Y, Q)$ be modal spaces and $f: \X \to \Y$ be a stable map. 
    \begin{enumerate}
        \item For a point $x \in \X$, we call $f$ \emph{a p-morphism for $x$} if, for any $y' \in Y$ such that $f(x)Qy'$, there exists some $x' \in X$ such that $xRx'$ and $f(x') = y'$.
        \item For a subset $U \subseteq \X$, we call $f$ \emph{a p-morphism for $U$} if $f$ is a p-morphism for $x$ for all $x \in U$.
    \end{enumerate}
\end{definition}

A stable map $f: \X \to \Y$ is a p-morphism for $X$ iff it is a p-morphism in the usual sense.

Now we are ready to formally introduce the \emph{Subdivision Construction}. The output of the construction is summarized in the following lemma, whose proof will be given along the way. 

\begin{lemma}[Subdivision Lemma] \label{4: Lem main}
    Let $\X$ be a modal space, $\F$ be a finite modal space, and $f: \X \surj_\emp \F$ be a stable map. Then, the Subdivision Construction applied to $f$ produces
    \begin{enumerate}
        \item a finite modal space $\F'$,
        \item a stable map $f': \X \surj_\emp \F'$ such that $f'$ is a p-morphism for $x \in \X$ with $\r{f'(x)} < \omega$,
        \item a stable map $g: \F' \surj_\emp \F$ such that $g$ is an isomorphism between the rank $\omega$ part of $\F'$ and that of $\F$,
    \end{enumerate}
    such that the following diagram commutes.

    \begin{center}
        \adjustbox{scale=1,center}{
\begin{tikzcd}
\X \arrow[rr, "f"] \arrow[rd, "f'"'] &                      & \F \\
                                     & \F' \arrow[ru, "g"'] &   
\end{tikzcd}
        }
    \end{center}
    Moreover, if $f$ satisfies CDC for $\D$ for some $\D \subseteq \pow{F}$, then so does $g$.
\end{lemma}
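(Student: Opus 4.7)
My plan is to build $\F'$ by recursion on rank, refining $\F$ only on its finite-rank part while leaving rank-$\omega$ points untouched. I will process the points of $\F$ in increasing order of rank: at each rank $n \in \omega$ and each $s \in \F$ with $\r{s} = n$, I let $s_1, \dots, s_k$ be its $\F$-successors (all of rank ${<}\, n$, hence by the recursive hypothesis already refined into families $\{s_i^{T'} : T' \in \mathcal{T}_{s_i}\}$ inside $\F'$), and for each $x \in f^{-1}(s)$ I record its \emph{type}
\[
T_x = \bigl\{ y' \in \textstyle\bigcup_i \{s_i^{T'} : T' \in \mathcal{T}_{s_i}\} : \exists x' \in X,\ xRx',\ f'(x') = y' \bigr\},
\]
where $f'$ refers to the partial map already defined on $\{x \in X : \r{f(x)} < n\}$. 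I set $\mathcal{T}_s = \{T_x : x \in f^{-1}(s)\}$, replace $s$ by the family $\{s_T : T \in \mathcal{T}_s\}$ with edges $s_T \to y'$ iff $y' \in T$, and extend $f'$ by $f'(x) = s_{T_x}$ on $f^{-1}(s)$.

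After handling every finite rank, I will adjoin the rank-$\omega$ points of $\F$ to $\F'$ unchanged, with edges $r \to r'$ inherited from $\F$ and edges $r \to s_T$ (for every $T \in \mathcal{T}_s$) whenever $r \to s$ in $\F$. Then I set $f'(x) = f(x)$ on $\{x \in X : \r{f(x)} = \omega\}$ and define $g: \F' \to \F$ by $s_T \mapsto s$ and $r \mapsto r$. The space $\F'$ is finite because each $|\mathcal{T}_s|$ is bounded by the power set of the (finite) collection of already-refined successors. Stability and surjectivity of both $f'$ and $g$, together with the factorization $f = g \circ f'$, will fall out directly from the construction, and continuity of $f'$ propagates by induction on rank: each $f'^{-1}(\{s_T\})$ is a Boolean combination of clopen sets of the form $R^{-1}[f'^{-1}(\{y'\})]$, clopen by the induction hypothesis together with the modal space axioms. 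By design, $g$ is the identity on the rank-$\omega$ part of $\F'$.

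The p-morphism clause is baked into the definition of types: for $x$ with $\r{f'(x)} < \omega$, we have $f'(x) = s_{T_x}$ for some finite-rank $s \in \F$, and the $\F'$-successors of $s_{T_x}$ are precisely the elements of $T_x$, each realized as $f'(x')$ for some $xRx'$ by the very definition of $T_x$. Finally, CDC for $g$ will come for free from the decomposition $f = g \circ f'$ and the CDC of $f$: given $s_T \in \F'$ and $D \in \D$ with $Q[g(s_T)] \cap D \neq \emp$, one picks any $x \in f'^{-1}(s_T)$, applies CDC for $f$ at $x$ (since $f(x) = g(s_T)$) to obtain $xRx'$ with $f(x') \in D$, and observes $f'(x') \in R'[s_T] \cap g^{-1}[D]$. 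The main obstacle I anticipate is the careful bookkeeping of the rank-indexed recursion: verifying that successors of finite-rank points in $\F'$ remain finite-rank so the recursion closes, that edges between rank-$\omega$ predecessors and the new $s_T$'s stay compatible with stability, and that each preimage $f'^{-1}(\{s_T\})$ is genuinely clopen via the inductive peeling on rank.
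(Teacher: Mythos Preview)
Your proposal is correct and takes essentially the same approach as the paper: both refine the finite-rank part of $\F$ by recording, for each $x \in f^{-1}(s)$, which already-refined successors of $s$ are $R$-reachable from $x$ (your ``types'' are exactly the paper's Boolean partition by the sets $R^{-1}[f_n^{-1}(c)]$), leave rank-$\omega$ points untouched with edges to \emph{all} refinements of their former successors, and verify continuity, stability, surjectivity, the p-morphism clause, and CDC for $g$ by the same direct arguments. The only cosmetic difference is that the paper threads the construction through a sequence of intermediate spaces $\F_1, \F_2, \dots, \F_N$ (one per rank level), whereas you describe the final $\F'$ in one pass; one minor slip in your continuity sketch is that $f'^{-1}(\{s_T\})$ is a Boolean combination of $f^{-1}(s)$ \emph{together with} the sets $R^{-1}[f'^{-1}(\{y'\})]$, not of the latter alone.
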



Let $\X = (X, R)$ be a modal space, $\F = (F, Q)$ be a finite modal space, and $f: \X \surj_\emp \F$ be a stable map. We divide the preimage of points in $\F$ inductively on rank by assigning to each point $x \in \X$ a \emph{signature} $s(x)$. The intuitive reading is that for two points $x, y \in f^{-1}(w)$ for some $w \in \F$, they are in the same subdivision of $f^{-1}(w)$ iff $s(x) = s(y)$. These signatures will be the ``name'' of the represented subdivision. The new finite space $\F'$ will have these signatures $s(x)$ for $x \in f^{-1}(w)$ as its points instead of the original $w$. 

For $x \in \X$ with $\r{f(x)} = 1$, let $s(x) = \<f(x), \emp\>$. This aligns with the fact that the dead end $c$ remained unchanged in \Cref{Ex subdivision}. Assuming that $s(x)$ is defined for $x \in \X$ with $\r{f(x)} < n$, for $x \in \X$ with $\r{f(x)} = n$, let $s(x) = \<f(x), \{s(x'): xRx'\}\>$. This is well-defined because for any $y, y' \in \X$ with $\r{f(y)} = n$ and $yRy'$, we have $f(y)Qf(y')$ since $f$ is stable, so $\r{f(y')} < \r{f(y)} = n$ and $s(y')$ is defined by assumption. Recall that in \Cref{Ex subdivision} we classified points in the preimage of $a$ according to whether they see some points in the preimage of $c$, $b_1$, and $b_2$. This data is recorded in the second coordinate of the signiture of a point. We have inductively defined the subdivision of the preimage of points in $\F$ with a finite rank. For $x \in \X$ with $\r{f(x)} = \omega$, let $s(x) = \<f(x), \emp\>$, which means that the rank $\omega$ part of $\F$ remains unchanged. 

Let $\F' = (F', Q')$, where $\F' = \{s(x): x \in \X\}$, and $Q'$ is defined by $s(x)Q's(y)$ iff 
\begin{enumerate}
    \item[(i)] $\r{f(x)} < \omega$, $\r{f(y)} < \omega$, and $s(y) \in \{s(x'): xRx'\}$; or
    \item[(ii)] $\r{f(x)} = \omega$ and $f(x)Qf(y)$. 
\end{enumerate}
The definition of $Q'$ only refers to $s(x)$ and $s(y)$, as well as their components, but not $x$ and $y$ themselves, so it is well-defined. Define $f': \X \to \F'$ by $f'(x) = s(x)$ and let $g: \F' \to \F$ be the projection to the first coordinate, that is, $g(\<w, S\>) = w$. We show that $\F'$, $f'$, and $g$ as defined satisfy the conditions in \Cref{4: Lem main} by a series of claims. 

The following observation will be useful for proving the claims.

\begin{lemma} \label{Lem for claims}
    Let $\<w, S\> \in \F'$. If $\r{w} = \omega$, then $S = \emp$ and $\r{\<w, S\>} = \omega$; if $\r{w} < \omega$, then $\r{\<w, S\>} \leq \r{w}$, in particular, $\r{\<w, S\>} < \omega$.
\end{lemma}

\begin{proof}
    Let $\<w, S\> \in \F'$ such that $\r{w} = \omega$. It follows directly from the construction that $S = \emp$. Also, by $\r{w} = \omega$, there is an infinite sequence $w = w_0 Q w_1 Q \dots$ in $\F$, where each $w_i$ has rank $\omega$. By the definition of $Q'$, we obtain an infinite sequence $\<w, S\> = \<w_0, \emp\> Q' \<w_1, \emp\> Q' \dots$ in $\F'$. Thus, $\r{\<w, S\>} = \omega$. 

    Next, we show that $\r{\<w, S\>} \leq \r{w}$ if $\r{w} < \omega$ by induction on $\r{w}$. If $\r{w} = 1$, then $S = \emp$ by the construction, so $\<w, S\>$ has no successor by the definition of $Q'$, hence $\r{\<w, S\>} = 1 = \r{w}$. Assume that $\r{\<w', S'\>} \leq \r{w'}$ for any $\<w', S'\> \in \F'$ such that $\r{w'} < n$. Let $\<w, S\> \in \F'$ such that $\r{w} = n$. If $\<w, S\> Q' \<w', S'\>$, then by the definition of $Q'$, we have $\<w', S'\> \in S$. By the construction, there is some $x \in \X$ such that $s(x) = \<w, S\>$ and $\{s(x'): xRx'\} = S$. So, there is some $x' \in \X$ such that $xRx'$ and $s(x') = \<w', S'\>$. This implies $f(x) = w$ and $f(x') = w'$. Since $f$ is stable, we have $wQw'$ by $xRx'$, so $\r{w'} < \r{w}$. By the induction hypothesis, $\r{\<w', S'\>} \leq \r{w'} < \r{w}$. Thus, $\r{\<w, S\>} \leq \r{w}$. This completes the induction and the proof of the claim.
\end{proof}

\begin{claim}
    $\F'$ is a finite modal space.
\end{claim}

\begin{proof}
    It remains only to check that $\F'$ is finite. Observe that $|\{s(x): \r{f(x)} = 1\}| \leq |F|$; if $|\{s(x): \r{f(x)} < n\}| = m$, then $|\{s(x): \r{f(x)} = n\}| \leq |F| \times 2^m$, as for each $x \in \X$ with $\r{f(x)} = n$, we have $\{s(y): xRy\} \subseteq \{s(y): \r{f(y)} < n\}$. This shows that for any $n$, $|\{s(x): \r{f(x)} \leq n\}|$ is finite. It follows from a similar argument as in the proof of \Cref{4: Prop cyclefree char} that, since $\F$ is finite, the rank $< \omega$ part of $\F$ has a uniform upper bound on rank, say, $N$. Since clearly $|\{s(x): \r{f(x)} = \omega\}| \leq |F|$ by the construction, we have $|F'| \leq |\{s(x): \r{f(x)} \leq N\}| + |F|$, which is finite.
\end{proof}

\begin{claim}
    $g \circ f' = f$.
\end{claim}

\begin{proof}
    For any $x \in \X$, we have $g(f'(x)) = g(\<f(x), S\>) = f(x)$ for some $S \subseteq F'$. Thus, $g \circ f' = f$.
\end{proof}

\begin{claim}
    $f'$ is a surjective stable map.
\end{claim}

\begin{proof}
    The surjectivity of $f'$ follows immediately from the construction of $\F'$ and the definition of $f'$. To show that $f'$ is continuous, it suffices to show that the preimage of each point in $\F'$ is clopen in $\X$. Let $\<w, S\> \in \F'$ so that $w \in \F$ and $S \subseteq F'$. If $\r{w} = 1$, then $f'^{-1}(\<w, S\>) = f^{-1}(w)$, which is clopen since $f$ is continuous. Assume that $f'^{-1}(\<w', S'\>)$ is clopen for any $w'$ with $\r{w'} < n$. If $\r{w} = n$, then 
    \begin{align*}
        &f'(x) = \<w, S\>    \text{ iff } \\ 
        &\begin{cases}
            f(x) = w \text{; and} &\text{}\\
            \forall \<w', S'\> \in \F' (\<w', S'\> \in S \to \exists x' \in \X (xRx' \land f'(x') = \<w', S'\>)) \text{; and} &\text{}\\
            \forall \<w', S'\> \in \F' (\<w', S'\> \notin S \land \r{w'} < n \to \lnot \exists x' \in \X (xRx' \land f'(x') = \<w', S'\>)). &\text{}\\
        \end{cases}
    \end{align*}
    In the last clause, we may restrict to $w'$ with $\r{w'} < n$ because otherwise the conclusion always holds: if $f(x) = w$ and $\r{w'} \geq n$, then $\exists x' \in \X (xRx' \land f'(x') = \<w', S'\>)$ would imply $f(x)Qf(x')$ and $f(x') = w'$, so $\r{w} = \r{f(x)} > \r{f(x')} = \r{w'} \geq n$, which contradicts $\r{w} = n$. The above equivalence implies 
    \begin{align*}
        f'^{-1}(\<w, S\>) = &f^{-1}(w) \cap \\
        &\bigcap \{R^{-1}(f'^{-1}(\<w', S'\>)): \<w', S'\> \in S\} \cap \\
        &\bigcap \{X \setminus R^{-1}(f'^{-1}(\<w', S'\>)): \<w', S'\> \notin S, \r{w'} < n\},
    \end{align*}
which, by our assumption, is a finite intersection of clopen sets and thus clopen. Inductively, this shows that if $\<w, S\> \in \F'$ and $\r{w} < \omega$, then $f'^{-1}(\<w, S\>)$ is clopen. Finally, if $\<w, S\> \in \F'$ and $\r{w} = \omega$, then $S = \emp$ by the construction, and we have $f'^{-1}(\<w, S\>) = f^{-1}(w)$, which is clopen since $f$ is continuous. So, $f'$ is continuous. 

To show the stability of $f'$, let $x, y \in \X$ such that $xRy$. Then $f(x)Qf(y)$ since $f$ is stable. If $\r{f(x)} < \omega$, then $\r{f(y)} < \omega$ as well, so $f'(x)Q'f'(y)$ since $xRy$ implies $s(y) \in \{s(x'): xRx'\}$. If $\r{f(x)} = \omega$, then $f'(x)Q'f'(y)$ follows directly from $f(x)Qf(y)$. Thus, $f'$ is stable.
\end{proof}

\begin{claim}
    $f'$ is a p-morphism for $x \in \X$ with $\r{f'(x)} < \omega$. 
\end{claim}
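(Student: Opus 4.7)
The plan is to split into two cases according to whether $f_{n+1}(x) \in W$, using in each case the defining features of $f_{n+1}$, the clauses of the $Q_{n+1}$ definition, and ultimately the inductive hypothesis on $f_n$.

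If $f_{n+1}(x) = w^v_s \in W$, then $x \in U^v_s$ by the construction of $f_{n+1}$, and by item~(3) the $\F_{n+1}$-successors of $w^v_s$ are exactly $\{c_i : s(i) = 1\}$. For each such $c_i$, the defining property of $U^v_s$ gives $x \in R^{-1}[f_n^{-1}(c_i)]$, so we obtain $x' \in X$ with $xRx'$ and $f_n(x') = c_i$; since $c_i \in C$ has rank $\leq n$ and hence lies outside $V$, we have $x' \notin U$, so $f_{n+1}(x') = f_n(x') = c_i$.

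If $f_{n+1}(x) \notin W$, then $x \notin U$ and $f_{n+1}(x) = f_n(x) \in F_n \setminus V$. The rank-preservation observation established just before this claim---that every point of $F_{n+1} \setminus W$ has the same rank in $\F_{n+1}$ as in $\F_n$---gives $\r{f_n(x)} = \r{f_{n+1}(x)} \leq n+1$, and then $f_n(x) \notin V$ (which forbids rank exactly $n+1$) forces $\r{f_n(x)} \leq n$. By the inductive hypothesis, $f_n$ is a p-morphism at $x$. Moreover, every $\F_n$-successor of $f_n(x)$ has strictly smaller rank and therefore lies outside $V$; by item~(2) this means no $W$-point is an $\F_{n+1}$-successor of $f_{n+1}(x)$. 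So any such successor $y'$ already lies in $F_n \setminus V$ with $f_n(x) Q_n y'$, and the p-morphism of $f_n$ at $x$ yields $x' \in X$ with $xRx'$ and $f_n(x') = y'$; since $y' \notin V$, $x' \notin U$ and $f_{n+1}(x') = y'$.

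The main obstacle is the second case: one must argue that the inductive hypothesis on $f_n$ applies to $x$, which amounts to tightening $\r{f_{n+1}(x)} \leq n+1$ in $\F_{n+1}$ into $\r{f_n(x)} \leq n$ in $\F_n$. The rank-preservation property for non-$W$ points is the crucial ingredient that permits this reduction; once it is in place, the fact that every successor of a rank-$\leq n$ point lies outside $V$ automatically rules out $W$-successors and reduces the verification to a direct invocation of $f_n$ being a p-morphism at $x$, completely avoiding the potentially delicate task of matching each $W$-point $w^v_s$ in the subdivision with a specific pattern among $x$'s $R$-successors.
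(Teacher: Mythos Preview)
Your proof is correct and follows essentially the same two-case split as the paper's own argument, using the defining property of $U^v_s$ in the $W$-case and the inductive hypothesis on $f_n$ in the non-$W$ case. You are in fact slightly more explicit than the paper in justifying why a $Q_{n+1}$-successor of a non-$W$ point of rank $\leq n$ cannot lie in $W$; the paper takes this for granted when it writes $f_n(x)Q_n q$ without first arguing $q\notin W$.
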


\begin{proof}
    Let $x \in \X$ with $\r{f'(x)} < \omega$. Then, by \Cref{Lem for claims}, $\r{f(x)} < \omega$ as well. Suppose that $f'(x)$ has a $Q'$-successor. Then it must be $f'(y)$ for some $y \in \X$ since $f'$ is surjective. Since $\r{f(x)} < \omega$, it follows from the definition of $Q'$ that $\r{f(y)} < \omega$ and $s(y) \in \{s(x'): xRx'\}$. This means that there is some $x' \in \X$ such that $xRx'$ and $f'(x') = f'(y)$. Thus, $f'$ is a p-morphism for $x$. 
\end{proof}

\begin{claim}
    $g$ is a surjective stable map.
\end{claim}

\begin{proof}
    The surjectivity of $g$ follows immediately from the surjectivity of $f$. Since $\F'$ is finite, it has the discrete topology, so $g$ is continuous. To show the stability of $g$, let $s, s' \in \F'$ such that $sQ's'$. Since $f'$ is surjective, $s = f'(x)$ and $s' = f'(x')$ for some $x, x' \in \X$. If $\r{s} < \omega$, then, since $f'$ is a p-morphism for $x \in \X$ with $\r{f'(x)} < \omega$, we may choose $x'$ to satisfy $xRx'$. Then, since $f$ is stable, we have $f(x)Qf(x')$, namely $g(s)Qg(s')$. If $\r{s} = \omega$, then $\r{f(x)} = \omega$ as well by \Cref{Lem for claims}, so $f(x)Qf(x')$ by the definition of $Q'$, namely $g(s)Qg(s')$. Thus, $g$ is stable.
\end{proof}

\begin{claim}
    $g$ is an isomorphism between the rank $\omega$ part of $\F'$ and that of $\F$.
\end{claim}

\begin{proof}
    It follows from \Cref{Lem for claims} that the rank $\omega$ part of $\F'$ has the underlying set $\{\<w, \emp\>: w \in \F, \r{w} = \omega\}$, and the statement follows immediately from $g(\<w, \emp\>) = w$, the definition of $g$.
\end{proof}

\begin{claim}
    Let $D \subseteq F$. If $f$ satisfies CDC for $D$, then so does $g$. Consequently, for $\D \subseteq \pow{F}$, if $f$ satisfies CDC for $\D$, then so does $g$. 
\end{claim}

\begin{proof}
    Let $D \subseteq F$ and suppose that $f$ satisfies CDC for $D$. To show that $g$ satisfies CDC for $D$, let $s \in \F'$ such that $g(s) Q w'$ for some $w' \in D$. Since $f'$ is surjective, there is some $x \in \X$ such that $f'(x) = s$, and so $f(x) = g(f'(x)) = g(s)$. Since $f$ satisfies CDC for $D$, there is some $x' \in \X$ such that $xRx'$ and $f(x') \in D$. Then $sQ' f'(x')$ since $f'$ is stable, and $g(f'(x')) = f(x') \in D$. Thus, $g$ satisfies CDC for $D$. The second statement follows immediately from the first one.
\end{proof}

The preceding claims complete the proof of \Cref{4: Lem main}.

\begin{remark}
    The Subdivision Construction can also be described in a step-by-step way inductively on the rank of points in $\F$: at the $n$-th step, we replace the points in $\F$ that have rank $n$ with new points corresponding to the subdivisions of their preimages. This presentation reflects the idea shown in \Cref{Ex subdivision} more directly, but it requires much more complicated notation, and thus we chose the current presentation of the construction as above. A detailed step-by-step description of the Subdivision Construction, as an alternative proof of \Cref{4: Lem main}, can be found in \cite[Chapter 4]{TakahashiThesis}. 
\end{remark}

\begin{remark}
    The idea of the Subdivision Construction is inspired by the direct proof of \Cref{4: Cor K-union-splittings have fmp} given in \cite[Theorem 10.54]{czModalLogic1997}, which was originally proved by Blok \cite{blok1978degree}. While the proof of \cite[Theorem 10.54]{czModalLogic1997} applies a certain filtration to the higher rank part of $\X$ and leaves the lower rank part of $\X$ unchanged so that the filtration would not destroy the structure of $\X$ too much, the Subdivision Construction can be seen as a post-processing of a standard filtration $\F$ of $\X$, leaving the higher rank part of $\F$ unchanged and modifying the lower rank part of $\F$ to recover the structure of $\X$ destroyed by the applied standard filtration.
\end{remark}

\section{Finite model property via the Subdivision Construction} \label{Sec 4}

In this section, we demonstrate how to use the Subdivision Construction (\Cref{4: Lem main}) to prove the fmp for logics and rule systems. In general, the Subdivision Construction does not preserve properties of modal spaces due to its combinatorial nature. When it does, it will produce a finite countermodel that is useful for showing the fmp. 

\begin{definition} \label{4: Def Subdivision preserve} \leavevmode
    \begin{enumerate}
        \item Let $\S$ and $\S'$ be rule systems such that $\S \subseteq \S'$. We say that the Subdivision Construction \emph{preserves $\S'$ over $\S$} if, whenever $\X \models \S'$ and $\F \models \S$ where $\X$ and $\F$ are as in the assumption of \Cref{4: Lem main}, the modal space $\F'$ obtained in \Cref{4: Lem main} validates $\S'$. 
        \item Let $L$ and $L'$ be logics such that $L \subseteq L'$. We say that the Subdivision Construction \emph{preserves $L'$ over $L$} if, whenever $\X$ is topo-rooted (i.e., the dual algebra is s.i.), $\X \models L'$, and $\F \models L$, where $\X$ and $\F$ are as in the assumption of \Cref{4: Lem main}, the modal space $\F'$ obtained in \Cref{4: Lem main} validates $L'$. 
    \end{enumerate}
\end{definition}

Note that in the above definition for rule systems, we cannot restrict to topo-rooted spaces (s.i.~algebras) because, contrary to varieties, universal classes may not be generated by their s.i.~members.

We will prove the fmp for logics and rule systems of the following form, and observe that they are union-splittings in the corresponding lattices of rule systems or logics. 
\begin{enumerate}
    \item A stable rule system + stable canonical rules of finite modal algebras of finite height,
    \item A strictly stable logic + stable canonical formulas of finite s.i.~modal algebras of finite height,
    \item A $\Kff{m+1}{1}$-stable logic + stable canonical formulas of finite s.i.~$\Kff{m+1}{1}$-algebras of finite height.
\end{enumerate}

The fmp of these logics and rule systems is proved via the following general scheme. 

\begin{theorem} \label{4: Thm main scheme} \leavevmode
\begin{enumerate}
    \item Let $\S$ be a rule system that admits definable filtration, and $\S' \supseteq \S$ be a rule system such that the Subdivision Construction preserves $\S'$ over $\S$. Then, $\S'$ has the fmp. 
    \item Let $L$ be a logic that admits definable filtration, and $L' \supseteq L$ be a logic such that the Subdivision Construction preserves $L'$ over $L$. Then, $L'$ has the fmp. 
\end{enumerate}
\end{theorem}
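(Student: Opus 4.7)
The plan is to reduce, in both parts, the refutation of an arbitrary rule (or formula) to the refutation of a stable canonical rule built from a finite member of $\S$ (resp.\ $L$), and then apply the Subdivision Lemma (\Cref{4: Lem main}) together with the preservation hypothesis to manufacture a finite countermodel that lies in $\S'$ (resp.\ $L'$).

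For (1), suppose $\rho \notin \S'$ and pick an $\S'$-algebra $\B$ with $\B \not\models \rho$. Since $\S \subseteq \S'$, $\B$ is also an $\S$-algebra, and because $\S$ admits definable filtration, \Cref{3: Thm scr complete} yields finitely many stable canonical rules $\rho(\A_1,D_1),\dots,\rho(\A_n,D_n)$ over finite $\S$-algebras $\A_i$ that are equivalent to $\rho$ on $\U(\S)$. Hence $\B \not\models \rho(\A_i,D_i)$ for some $i$, which dually produces a surjective stable map $f \colon \X \surj_{\D_i} \F_i$, where $\X$ is the dual space of $\B$ (so $\X \models \S'$) and $\F_i$ is the finite $\S$-space dual to $\A_i$. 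Applying the Subdivision Construction to $f$ yields a finite modal space $\F'$ together with maps $f' \colon \X \surj_\emp \F'$ and $g \colon \F' \surj_{\D_i} \F_i$ with $g \circ f' = f$. By the preservation hypothesis, $\F' \models \S'$. The dual characterization following \Cref{3: Thm sc rule char} applied to $g$ gives $\F' \not\models \rho(\A_i,D_i)$, and since $\F' \models \S$, the equivalence of $\rho$ with the $\rho(\A_j,D_j)$'s on $\U(\S)$ forces $\F' \not\models \rho$. Thus the dual algebra of $\F'$ is a finite $\S'$-algebra refuting $\rho$, which proves the fmp for $\S'$.

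For (2), the argument is formally identical, with one preliminary maneuver. Since $\phi \notin L'$ and every variety is generated by its s.i.\ members (Birkhoff's subdirect representation theorem), we may choose an \emph{s.i.}\ $L'$-algebra $\B$ refuting $\phi$. Rewriting $\phi$ as the rule ${/}\phi$ and using that $L$ admits definable filtration iff $\S_L$ does, \Cref{3: Thm scr complete} again supplies stable canonical rules $\rho(\A_i,D_i)$ over finite $L$-algebras equivalent to ${/}\phi$ on $\U(\S_L)$. By \Cref{4: Thm si toporooted}, the dual space $\X$ of $\B$ is topo-rooted, and it satisfies $\X \models L'$. We now proceed exactly as in (1): pick $i$ with $\X \surj_{\D_i} \F_i$, apply the Subdivision Construction, invoke the preservation of $L'$ over $L$ to conclude $\F' \models L'$, and deduce $\F' \not\models \phi$ via the dual characterization of stable canonical rules.

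No step is expected to present a serious obstacle, since every ingredient is already at hand: the Subdivision Lemma, the dual characterization of stable canonical rules, and the filtration-based reduction to stable canonical rules. The one point that requires attention lies in (2), where the preservation of $L'$ over $L$ is hypothesized only when $\X$ is topo-rooted; this is precisely what forces the initial appeal to Birkhoff's theorem to select $\B$ subdirectly irreducible, and it is the only substantive difference between the rule-system and logic cases.
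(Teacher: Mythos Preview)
Your proposal is correct and follows essentially the same route as the paper's proof: reduce via \Cref{3: Thm scr complete} to a stable canonical rule, dualize to a surjective stable map onto a finite $\S$-space (resp.\ $L$-space), apply the Subdivision Lemma, and use the preservation hypothesis to conclude that the resulting finite space is the desired countermodel. Your explicit invocation of Birkhoff's theorem to justify choosing $\B$ s.i.\ in part~(2), and your remark that this is precisely what the topo-rootedness clause in \Cref{4: Def Subdivisoin preserve}(2) demands, make the one nontrivial point in the logic case clearer than the paper does.
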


\begin{proof}
    We only prove the statement for logics. The proof for rule systems is similar.
    
    Let $L$ be a logic that admits definable filtration, and $L' \supseteq L$ be a logic such that the Subdivision Construction preserves $L'$ over $L$. Let $\phi \notin L'$. It follows from the fact that every variety is generated by its s.i.~members (see, e.g., \cite[Theorem 3.44]{UniversalAlgebraFundamentals2011}) that there is a s.i.~modal algebra $\B$ such that $\B \models L'$ and $\B \not\models \phi$. Since $L$ admits definable filtration, by \Cref{3: Thm scr complete}, $\phi$ is equivalent to a set of stable canonical rules $\{\rho(\A_k, D_k): 1 \leq k \leq n\}$ where each $\A_k$ is a finite $L$-algebra and $D_k \subseteq A_k$. So, $\B \not\models \rho(\A_k, D_k)$ for some $1 \leq k \leq n$, namely $\A_k \inj_{D_k} \B$. 

    Let $\X = (X, R)$ and $\F = (F, Q)$ be the dual space of $\B$ and $\A_k$ respectively, and $\D = \sigma[D_k]$. Then, $\X$ is topo-rooted, $\X \models L'$, $\F \models L$, and there is a stable map $f: \X \surj_{\D} \F$. Applying the Subdivision Construction (\Cref{4: Lem main}), we obtain a finite modal space $\F'$ and a stable map $g: \F' \surj_{\D} \F$. Let $\A'$ be the dual algebra of $\F'$. Since the Subdivision Construction preserves $L'$ over $L$, we have $\F' \models L'$, so $\A'$ is a finite $L'$-algebra. Moreover, since $\F' \surj_{\D} \F$, dually we have $\A_k \inj_{D_k} \A'$, so $\A' \not\models \rho(\A_k, D_k)$, hence $\A' \not\models \phi$. Thus, $\A'$ is a finite $L'$-algebra refuting $\phi$, and therefore we conclude that $L'$ has the fmp.
\end{proof}

Before proving the fmp results, we collect some facts on modal algebras of finite height and cycle-free modal spaces for later use. Let $\FH$ be the class of modal algebras of finite height, that is, 
\begin{align*}
    \FH &= \{\A: \exists n \in \omega  (\A \models \Box^n\bot)\} \\
        &= \{\A: \exists n \in \omega  (\Box^n 0 = 1)\}.
\end{align*}
Let $\FH\fin$ be the class of finite members of $\FH$.

\begin{proposition} \label{4: Prop cf dual to fh}
    Let $\A$ be a finite modal algebra and $\X$ be its dual space. Then, $\A$ is of finite height iff $\X$ is cycle-free. Therefore, $\FH\fin$ is the class of modal algebras dual to finite cycle-free modal spaces.
\end{proposition}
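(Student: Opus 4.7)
The plan is to translate the algebraic condition ``$\A$ is of finite height'' into a semantic condition on the dual Kripke frame $\X$ via the validity of formulas $\Box^{n+1}\bot$, and then apply the characterization of finite cycle-free modal spaces already established in \Cref{4: Prop cyclefree char}.

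First I would unwind the definition of finite height: $\A$ has height $\leq n$ means $\A \models \Box^{n+1}\bot$. Since $\A$ is finite, the dual space $\X$ is just a finite Kripke frame and $\A \models \Box^{n+1}\bot$ is equivalent to $\X \models \Box^{n+1}\bot$, i.e., for every point $x \in X$, there is no chain $x = x_0 R x_1 R \cdots R x_{n+1}$. Combining this with the formula-theoretic definition $\r{x} = \sup\{k \in \omega : \X, x \not\models \Box^{k-1}\bot\}$ given before \Cref{4: Prop cyclefree char}, one obtains the equivalence $\r{x} \leq n+1$ for all $x \in X$. I would present this as a small bookkeeping step: $\A \models \Box^{n+1}\bot$ iff every point of $\X$ has rank at most $n+1$.

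With this translation in hand, the proposition becomes immediate. On the one hand, if $\A$ has finite height, fix $n$ with $\A \models \Box^{n+1}\bot$; then every point of $\X$ has rank $\leq n+1$, so by the implication $(3) \Rightarrow (1)$ of \Cref{4: Prop cyclefree char} the space $\X$ is cycle-free. Conversely, if $\X$ is cycle-free, the implication $(1) \Rightarrow (3)$ of \Cref{4: Prop cyclefree char} gives a uniform bound $N$ on the ranks of all points of $\X$, hence $\X \models \Box^N \bot$, hence $\A \models \Box^N \bot$, witnessing that $\A$ has finite height. The ``therefore'' clause is then just the observation that $\FH\fin$ is by definition the intersection of $\FH$ with the finite modal algebras, so the first equivalence identifies it with the class dual to finite cycle-free modal spaces.

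There is really no serious obstacle here; the only thing worth stating carefully is the correspondence between the algebraic semantics of $\Box^{n+1}\bot$ and the rank function on $\X$, since the rank is defined via validity of $\Box^{k-1}\bot$ and one must check the index match. Everything else follows mechanically from \Cref{4: Prop cyclefree char} and the finiteness of $\X$ (which lets us freely move between $\A \models \phi$ and $\X \models \phi$ via Kripke semantics on the dual frame, with no need for topological considerations).
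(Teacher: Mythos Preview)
Your proposal is correct and follows essentially the same route as the paper: translate finite height to validity of some $\Box^{k}\bot$, pass to the dual space, rephrase this as a uniform rank bound, and invoke \Cref{4: Prop cyclefree char}. The only cosmetic difference is that the paper presents this as a single chain of iff's rather than two separate directions.
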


\begin{proof}
    Let $\A$ be a finite modal algebra and $\X$ be its dual space. Then, 
    \begin{align*}
        \A \text{ is of finite height} 
        &\iff \text{there is some $n \in \omega$ such that $\A \models \Box^n\bot$} \\
        &\iff \text{there is some $n \in \omega$ such that $\X \models \Box^n\bot$} \\
        &\iff \text{there is some $n \in \omega$ such that $\r{x} \leq n$ for all $x \in \X$} \\
        &\iff \X \text{ is cycle-free (by \Cref{4: Prop cyclefree char}).}
    \end{align*}
    Thus, a finite modal algebra is of finite height iff its dual modal space is cycle-free.
\end{proof}

Stable subalgebras reflect the property of having finite height. 

\begin{lemma} \label{4: Lem stable subalgebra reflects fh}
    Let $\A$ be a modal algebra and $\B$ be a stable subalgebra of $\A$. If $\B$ is of finite height, then so is $\A$. Therefore, $\FH$ reflects stable subalgebras.
\end{lemma}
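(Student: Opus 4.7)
The plan is to extract the identity $\Box^n 0 = 1$ witnessing $\B$'s finite height and push it forward to $\A$ via the stable embedding. First I would rewrite the defining inequality $\Dia h(a) \leq h(\Dia a)$ of a stable homomorphism $h \colon \B \to \A$ in its dual form $h(\Box b) \leq \Box h(b)$ by taking Boolean complements (setting $b = \neg a$ and using that $h$ is a Boolean homomorphism). A short induction on $n$, using the monotonicity of $\Box$ on $\A$, then extends this to $h(\Box^n b) \leq \Box^n h(b)$ for every $n \in \omega$ and every $b \in B$.

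Now assume that $\B$ is of height $\leq n$, i.e.\ $\Box^n 0_\B = 1_\B$. Applying the iterated stability inequality at $b = 0_\B$, together with $h(0_\B) = 0_\A$ and $h(1_\B) = 1_\A$, yields
\[
1_\A \;=\; h(1_\B) \;=\; h(\Box^n 0_\B) \;\leq\; \Box^n h(0_\B) \;=\; \Box^n 0_\A,
\]
so $\Box^n 0_\A = 1_\A$, whence $\A$ is of height $\leq n$ as well. The concluding ``Therefore $\FH$ reflects stable subalgebras'' is then immediate from the definition of $\B \inj_\emp \A$.

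I do not anticipate any real obstacle here: the argument collapses to iterating the stability inequality $n$ times. The only conceptual point worth flagging is the direction of the inequality. The condition $h(\Box b) \leq \Box h(b)$ is exactly what is needed to transport an identity of the form $\Box^n 0 = 1$ \emph{upwards} from a subalgebra to a superalgebra; by contrast, a p-morphism (equivalently, a full modal homomorphism) would transfer finite height in the opposite direction, and a merely stable map would in general fail to preserve height downwards. This asymmetry is the reason the lemma is phrased as reflection rather than preservation.
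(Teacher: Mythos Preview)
Your proof is correct and is essentially identical to the paper's: both derive the dual inequality $i(\Box b) \leq \Box i(b)$ from stability, iterate it to $i(\Box^n 0_\B) \leq \Box^n 0_\A$, and then use $\Box^n 0_\B = 1_\B$ to conclude $\Box^n 0_\A = 1_\A$. The paper merely compresses the derivation into the single chain $\Box^n 0_\A = \Box^n i(0_\B) \geq i(\Box^n 0_\B) = i(1_\B) = 1_\A$, whereas you spell out the dualization and the induction explicitly.
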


\begin{proof}
    Let $i: \B \to \A$ be a stable embedding. Suppose $\B$ is of finite height. Then there is some $n \in \omega$ such that $\Box^n 0_\B = 1_\B$. So, 
    \[\Box^n 0_\A = \Box^ni(0_\B) \geq i(\Box^n 0_\B) = i(1_\B) = 1_\A,\]
    hence $\Box^n 0_\A = 1_\A$. Thus, $\A$ is of finite height.
\end{proof}

Dually, the above lemma means that stable maps reflect the property of being cycle-free, which is also a consequence of \Cref{4: Lem stable map rank non-decrease}.

\subsection{Union-splittings of the lattice of extensions of stable rule systems}

Now we proceed to apply \Cref{4: Thm main scheme} to show the fmp for logics and rule systems. We start with the fmp for a class of rule systems. This case is relatively more straightforward because stable canonical rules have a simpler semantic characterization than stable canonical formulas.

\begin{lemma} \label{4: Lem stable canonical rule preserved}
    Let $\S$ be a stable rule system. If $\{\rho(\A_i, D_i): i \in I\}$ is a set of stable canonical rules where each $\A_i$ is a finite modal algebra of finite height and $D_i \subseteq A_i$, then the rule system 
    \[\S + \{\rho(\A_i, D_i): i \in I\}\]
    is preserved by the Subdivision Construction over $\S_\K$.
\end{lemma}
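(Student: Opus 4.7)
The plan is as follows. Write $\S' = \S + \{\rho(\A_i, D_i) : i \in I\}$ and suppose $\X \models \S'$ and $\F \models \S_\K$, with $f \colon \X \surj_{\D} \F$ as in \Cref{4: Lem main}; let $f' \colon \X \surj_{\emp} \F'$ and $g \colon \F' \surj_{\D} \F$ denote the output of the Subdivision Construction. I need to verify $\F' \models \S'$. By \Cref{4: Thm stable rule system and logic and stable rule}, the stable rule system $\S$ is axiomatizable over $\S_\K$ by a family of stable rules $\{\rho(\C_j, \emp) : j \in J\}$, so it suffices to check $\F' \models \rho(\C_j, \emp)$ for each $j \in J$ and $\F' \models \rho(\A_i, D_i)$ for each $i \in I$. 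In both cases the strategy is the same: assume, for contradiction, that $\F'$ refutes the rule, dualize via \Cref{3: Thm sc rule char} to a stable surjection out of $\F'$ with the appropriate CDC, precompose with $f'$, and derive that $\X$ also refutes it.

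For the stable rules, suppose $\F' \not\models \rho(\C_j, \emp)$, so that there is a stable surjection $h \colon \F' \surj_{\emp} \G_j$, where $\G_j$ is dual to $\C_j$. Then $h \circ f' \colon \X \to \G_j$ is stable and surjective, and CDC for $\emp$ is vacuous, hence $\X \surj_{\emp} \G_j$, contradicting $\X \models \rho(\C_j, \emp)$. No p-morphism property of $f'$ is invoked here.

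The case $\rho(\A_i, D_i)$ is where the finite-height hypothesis is essential, and where the only real work lies. Let $\F_i$ be dual to $\A_i$ and $\D_i = \sigma(D_i)$, and suppose $h \colon \F' \surj_{\D_i} \F_i$. The composition $h \circ f' \colon \X \to \F_i$ is clearly a stable surjection, and the remaining task is to show that it satisfies CDC for $\D_i$, which will contradict $\X \models \rho(\A_i, D_i)$. Fix $x \in \X$, $D \in \D_i$, and suppose $h(f'(x)) Q d$ for some $d \in D$. The CDC of $h$ provides $y' \in F'$ with $f'(x) Q_{\F'} y'$ and $h(y') \in D$, so I need $x' \in X$ with $xRx'$ and $f'(x') = y'$; in other words, $f'$ must act as a p-morphism at $x$. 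By \Cref{4: Lem main}, this holds whenever $\r{f'(x)} < \omega$. Now \Cref{4: Prop cf dual to fh} turns the finite-height hypothesis on $\A_i$ into cycle-freeness of $\F_i$, so every point of $\F_i$ has finite rank, and \Cref{4: Lem stable map rank non-decrease} applied to $h$ then gives $\r{f'(x)} \leq \r{h(f'(x))} < \omega$. Thus the CDC transfers and the contradiction is reached. The main obstacle is precisely this CDC-transfer step, which ties together the subdivision's partial p-morphism guarantee, the rank non-decrease of stable maps, and the finite-height hypothesis on each $\A_i$; without finite height, $f'(x)$ could land on a rank-$\omega$ point where $f'$ is only stable, not a p-morphism, and the argument would break down.
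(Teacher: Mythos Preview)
Your proof is correct and follows essentially the same approach as the paper. Two cosmetic differences: (i) the paper verifies $\F' \models \S$ directly from the definition of a stable rule system (the class $\U(\S)$ is closed under stable subalgebras, so dually $\F'$, being a stable image of $\X$, validates $\S$), whereas you unfold this via the stable-rule axiomatization; (ii) for the rules $\rho(\A_i,D_i)$ the paper argues globally that $\F'$ is cycle-free (via the dual of \Cref{4: Lem stable subalgebra reflects fh}) and hence that $f'$ is a p-morphism everywhere, while you reach the same conclusion pointwise through $\r{f'(x)} \le \r{h(f'(x))} < \omega$. Both routes rest on the same rank-nondecrease fact and yield the same CDC transfer.
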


\begin{proof}
    Let $\S' = \S + \{\rho(\A_i, D_i): i \in I\}$. Let $\X$ be an $\S'$-space, $\F$ be a finite modal space, $\D \subseteq \pow{F}$, and $f: \X \surj_\D \F$ be a stable map. Applying the Subdivision Construction (\Cref{4: Lem main}), we obtain a finite modal space $\F'$ and a stable map $f': \X \surj_\emp \F'$ such that $f'$ is a p-morphism for $x \in \X$ with $\r{f'(x)} < \omega$. Since $\S$ is stable, by the dual of \Cref{3: Def stable}, we have $\F' \models \S$. 

    Suppose for a contradiction that $\F' \not\models \rho(\A_i, D_i)$ for some $i \in I$. Let $\F_i$ be the dual space of $\A_i$ and $\D_i = \sigma[D_i]$. Then, $\F' \surj_{\D_i} \F_i$ by the dual of \Cref{3: Thm sc rule char}. Since $\A_i$ is of finite height, $\F_i$ is cycle-free by \Cref{4: Prop cf dual to fh}, and so is $\F'$ by the dual of \Cref{4: Lem stable subalgebra reflects fh}. It follows from \Cref{4: Prop cyclefree char} that every point $x \in \X$ satisfies $\r{f'(x)} < \omega$, so $f': \X \surj \F'$ is a p-morphism. Composing with a stable map $\F' \surj_{\D_i} \F_i$, we see that $\X \surj_{\D_i} \F_i$, namely, $\X \not\models \rho(\A_i, D_i)$, which contradicts $\X \models \S'$. Thus, $\F' \models \rho(\A_i, D_i)$ for all $i \in I$, and therefore $\F' \models \S'$. 
\end{proof}

\begin{theorem} \label{4: Thm fmp S + sp rule}
    Let $\S$ be a stable rule system. If $\{\rho(\A_i, D_i): i \in I\}$ is a set of stable canonical rules where each $\A_i$ is a finite modal algebra of finite height and $D_i \subseteq A_i$, then the rule system 
    \[\S + \{\rho(\A_i, D_i): i \in I\}\]
    has the fmp.
\end{theorem}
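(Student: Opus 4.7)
The plan is to deduce this theorem as an immediate application of the general scheme in \Cref{4: Thm main scheme}(1). To invoke the scheme I need a base rule system that admits definable filtration and that sits below the target rule system in the sense required by the scheme. The natural choice here is $\S_\K$: it admits definable filtration because $\K$ admits the least (and the greatest) standard filtration, and by the observation following \Cref{3: Thm filtration implies fmp} this transfers from $\K$ to the associated rule system $\S_\K$.

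With the base secured, the remaining hypothesis of \Cref{4: Thm main scheme}(1) — that the Subdivision Construction preserves the target rule system over $\S_\K$ — is exactly the content of \Cref{4: Lem stable canonical rule preserved}, which has just been established. Thus, applying \Cref{4: Thm main scheme}(1) with base $\S_\K$ and target $\S' = \S + \{\rho(\A_i, D_i) : i \in I\}$ delivers the fmp at once.

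In other words, the present theorem is essentially a packaging of the preservation lemma through the general scheme, so no substantive obstacle remains. It is worth flagging one point of care: we use $\S_\K$, rather than $\S$ itself, as the base rule system. We do not assume that an arbitrary stable $\S$ admits definable filtration, so stability of $\S$ is not what supplies hypothesis (i); instead, the stability of $\S$ is exploited inside \Cref{4: Lem stable canonical rule preserved} to guarantee that the output modal space $\F'$ of the Subdivision Construction continues to validate $\S$, while the finite-height axioms $\rho(\A_i, D_i)$ are preserved thanks to the cycle-freeness that finite height forces on the relevant dual spaces via \Cref{4: Prop cf dual to fh}.
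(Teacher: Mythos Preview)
Your proposal is correct and matches the paper's own proof essentially verbatim: the paper derives the theorem directly from \Cref{4: Thm main scheme}(1) using \Cref{4: Lem stable canonical rule preserved} and the fact that $\S_\K$ admits (definable) filtration, exactly as you outline. Your additional remark about why $\S_\K$ (rather than $\S$) serves as the base is accurate and makes explicit something the paper leaves implicit.
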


\begin{proof}
    This follows from \Cref{4: Thm main scheme} with \Cref{4: Lem stable canonical rule preserved} and the fact that $\S_\K$ admits (definable) filtration.
\end{proof}

\begin{remark}
    By \Cref{4: Thm stable rule system and logic and stable rule}, a rule system is stable iff it is axiomatizable over $\S_\K$ by stable rules. So, we can reformulate \Cref{4: Thm fmp S + sp rule} as follows: any rule system of the form 
    \[\S_\K + \{\rho(\A_j, \emp): j \in J\} + \{\rho(\A_i, D_i): i \in I\}\]
    where each $\A_j$ is a finite modal algebra and each $\A_i$ is a finite modal algebra of finite height, has the fmp.
\end{remark}

As a corollary, we obtain the fact that stable rule systems have the fmp \cite{stablecanonicalrules}.

\begin{corollary}
    Every stable rule system has the fmp.
\end{corollary}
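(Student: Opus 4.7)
The plan is to derive this as an immediate special case of \Cref{4: Thm fmp S + sp rule}. Given a stable rule system $\S$, observe that $\S = \S + \emp$, which trivially fits the hypothesis of \Cref{4: Thm fmp S + sp rule} by taking the index set $I = \emp$ (no extra stable canonical rules). Hence $\S$ has the fmp.

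Alternatively, one can unfold the argument to see that no new ingredient is needed. Suppose $\rho \notin \S$. Since $\S_\K$ admits definable filtration, by \Cref{3: Thm scr complete}, $\rho$ is equivalent over $\S_\K$ to finitely many stable canonical rules $\rho(\A_1, D_1), \dots, \rho(\A_n, D_n)$, so some $\S$-algebra $\B$ refutes $\rho(\A_k, D_k)$. Dualizing to a stable map $f\colon \X \surj_{\D_k} \F_k$ and applying \Cref{4: Lem main}, we obtain a finite modal space $\F'$ with $f'\colon \X \surj_\emp \F'$ and $g\colon \F' \surj_{\D_k} \F_k$. Since $\S$ is stable and $\X \models \S$, the dual algebra of $\F'$ is a stable subalgebra of that of $\X$, hence an $\S$-algebra; dually, $\F' \models \S$. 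Finally, by the dual of \Cref{3: Thm sc rule char}, $\F' \surj_{\D_k} \F_k$ witnesses $\F' \not\models \rho(\A_k, D_k)$, so $\F'$ refutes $\rho$. Thus $\S$ has the fmp.

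There is no real obstacle here: the whole content is already packaged in \Cref{4: Thm fmp S + sp rule}, and this corollary simply records the $I = \emp$ case. The only thing to note is that the argument does not require the finite-height hypothesis precisely because no extra stable canonical rules with closed domains are imposed; the preservation of $\S$ by the Subdivision Construction follows from the stability of $\S$ (i.e., closure of $\U(\S)$ under stable subalgebras) applied to $f'\colon \X \surj_\emp \F'$, without invoking \Cref{4: Prop cf dual to fh}.
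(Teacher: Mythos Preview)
Your proof is correct and matches the paper's own proof exactly: the paper simply says that the corollary follows from \Cref{4: Thm fmp S + sp rule} by taking $I = \emp$. Your additional unfolded argument is also correct and just spells out the content of \Cref{4: Thm main scheme} and \Cref{4: Lem stable canonical rule preserved} in this degenerate case.
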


\begin{proof}
    This follows from \Cref{4: Thm fmp S + sp rule} by taking $I = \emp$.
\end{proof}

A natural question is what these rule systems in \Cref{4: Thm fmp S + sp rule} are. It turns out that they form a lower part of the union-splittings of the lattice $\NExt{\S}$ and have a neat characterization. In particular, we observe that varying the closed domain condition and using finite modal algebras outside $\U(\S)$ do not increase axiomatization power, as long as the algebras are of finite height. Recall that $\FH$ is the class of modal algebras of finite height, and Jankov rules are stable canonical rules of the form $\rho(\A, A)$. The rule systems mentioned in \Cref{4: Thm fmp S + sp rule} are precisely those satisfying the condition (4) in the following theorem. Recall that for a class $\class{K}$ of modal algebras, $\class{K}\fin$ denotes the class of finite members of $\class{K}$.

\begin{theorem} \label{4: Thm characterization of splitting rule system}
    Let $\S$ be a stable rule system. For any rule system $\S' \in \NExt{\S}$, the following are equivalent:
    \begin{enumerate}
        \item $\U(\S) \subseteq \U(\S') \cup \FH$,
        \item $\U(\S)\fin \subseteq \U(\S') \cup \FH$,
        \item $\S'$ is axiomatized over $\S$ by Jankov rules of finite $\S$-algebras of finite height,
        \item $\S'$ is axiomatized over $\S$ by stable canonical rules of finite modal algebras of finite height,
        \item $\S' \subseteq \S + \{\rho(\A, A): \A \in \U(\S)\fin \cap \FH\}$.
    \end{enumerate}
    Moreover, any rule system $\S' \in \NExt{\S}$ satisfying (one of) the above conditions is a union-splitting in $\NExt{\S}$.
\end{theorem}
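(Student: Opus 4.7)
The plan is to establish the five equivalences by the cycle $(3)\Rightarrow(4)\Rightarrow(1)\Rightarrow(2)\Rightarrow(3)$ together with $(3)\Rightarrow(5)\Rightarrow(2)$, and then derive the union-splitting conclusion separately. The immediate steps are $(3)\Rightarrow(4)$ (every Jankov rule is a stable canonical rule, and every finite $\S$-algebra is a finite modal algebra) and $(1)\Rightarrow(2)$. For $(4)\Rightarrow(1)$, I would take $\B \in \U(\S) \setminus \U(\S')$: by \Cref{3: Thm sc rule char} some axiom $\rho(\A_i,D_i)$ is refuted by $\B$, giving a stable embedding $\A_i \inj_{D_i} \B$, and since $\A_i \in \FH$, \Cref{4: Lem stable subalgebra reflects fh} forces $\B \in \FH$. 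The same reflection argument handles $(5)\Rightarrow(2)$: if $\B \in \U(\S)\fin \setminus \U(\S')$, then by $(5)$ the algebra $\B$ must refute some Jankov rule of a finite $\FH$-algebra, yielding a modal embedding of an $\FH$-algebra into $\B$ and hence $\B \in \FH$; and $(3)\Rightarrow(5)$ is immediate once one checks that the Jankov rules witnessing $(3)$ are of the form required by the right-hand side of $(5)$.

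The main step is $(2)\Rightarrow(3)$. I would set
\[
\S''' := \S + \{\rho(\A,A) : \A \in \U(\S)\fin \cap \FH \text{ and } \rho(\A,A) \in \S'\},
\]
so that $\S''' \subseteq \S'$ holds by construction. For the reverse inclusion, I invoke the fmp of $\S'''$ guaranteed by \Cref{4: Thm fmp S + sp rule}, reducing the claim to verifying $\B \models \rho$ for every $\rho \in \S'$ and every finite $\S'''$-algebra $\B$. Such a $\B$ lies in $\U(\S)\fin$, so by $(2)$ we have $\B \in \U(\S') \cup \FH$. The case $\B \in \U(\S')$ is immediate; in the remaining case $\B \in \FH \setminus \U(\S')$, closure of the universal class $\U(\S')$ under subalgebras prevents $\B$ from modally embedding into any $\S'$-algebra, whence $\rho(\B,B) \in \S'$. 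Then $\rho(\B,B)$ is an axiom of $\S'''$, yet $\B$ refutes it via the identity embedding -- a contradiction with $\B \models \S'''$.

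For the union-splitting conclusion, I would relativize \Cref{4: Thm splitting rule system and Jankov rule} from $\NExt{\S_\K}$ to $\NExt{\S}$: for each $\A \in \U(\S)\fin \cap \FH$, the rule system $\S + \rho(\A,A)$ is a splitting of $\NExt{\S}$ whose companion is the rule system of all $\S$-algebras into which $\A$ modally embeds. Any $\S'$ satisfying $(3)$ is then a join of such splittings and hence a union-splitting in $\NExt{\S}$. The main obstacle is $(2)\Rightarrow(3)$: its success hinges on being able to insert $\rho(\B,B)$ as an axiom of $\S'''$ whenever $\B \in \U(\S)\fin \cap \FH \setminus \U(\S')$, which in turn rests on the subalgebra-closure of the universal class $\U(\S')$. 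A secondary technical point is the relativization of the splitting theorem, which requires carefully identifying the splitting pair inside $\NExt{\S}$ rather than $\NExt{\S_\K}$.
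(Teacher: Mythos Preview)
Your proposal tracks the paper's proof closely: same cycle of implications, same use of \Cref{4: Lem stable subalgebra reflects fh} for the reflection steps $(4)\Rightarrow(1)$ and $(5)\Rightarrow(2)$, and same appeal to \Cref{4: Thm fmp S + sp rule} for the fmp in $(2)\Rightarrow(3)$. Your auxiliary system $\S'''$ differs only cosmetically from the paper's $\S''$ (you index by ``$\rho(\A,A)\in\S'$'' rather than ``$\A\not\models\S'$'', but these conditions are equivalent by subalgebra-closure of $\U(\S')$), and the two arguments for $\S'=\S'''$ are essentially identical.

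There is, however, a gap at $(3)\Rightarrow(5)$. You claim that the Jankov rules witnessing $(3)$ are ``of the form required by the right-hand side of $(5)$'', i.e., that each $\A_i$ lies in $\U(\S')\fin\cap\FH$. This is false: every $\A_i$ refutes its own Jankov rule $\rho(\A_i,A_i)\in\S'$, so $\A_i\notin\U(\S')$. In fact $(5)$ as literally written is \emph{not} implied by $(3)$. Take $\S=\S_\K$ and $\S'=\S_\K+\rho(\A_0,A_0)$ where $\A_0$ is the two-element algebra dual to a single irreflexive point. Then $(3)$ holds, yet $\A_0$ validates every axiom on the right-hand side of $(5)$: any nontrivial $\A\in\U(\S')\fin\cap\FH$ satisfies $\Dia 1_\A\neq 0_\A$ (this is exactly what $\A_0\not\inj\A$ says), and no such $\A$ can embed into the two-element $\A_0$ where $\Dia 1=0$. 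Since $\A_0\notin\U(\S')$, condition $(5)$ fails. The paper itself writes ``$(3)\Rightarrow(5)$: This is clear'' without further comment, so the issue is evidently a typo in the statement of $(5)$: it should read $\U(\S)\fin\cap\FH$ rather than $\U(\S')\fin\cap\FH$. With that correction your justification (and the paper's) becomes genuinely immediate, and the ``lower part'' remark following the theorem still goes through.
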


\begin{proof} 
    Let $\S$ be a stable rule system and $\S' \in \NExt{\S}$. We first show the equivalence of the first four items.
    
    $(1) \Rightarrow (2)$: This is clear. 

    $(2) \Rightarrow (3)$: Suppose that $\U(\S)\fin \subseteq \U(\S') \cup \FH$. Let 
    \[\S'' = \S + \{\rho(\A, A): \A \text{ is a finite $\S$-algebra such that } \A \not\models \S'\}.\] 
    Since $\U(\S)\fin \subseteq \U(\S') \cup \FH$, a finite $\S$-algebra refuting $\S'$ must be of finite height, so $\S''$ is axiomatized over $\S$ by Jankov rules of finite modal algebras of finite height, thus $\S''$ has the fmp by \Cref{4: Thm fmp S + sp rule}. It suffices to show that $\S' = \S''$.

    If $\S' \not\subseteq \S''$, then by the fmp of $\S''$, there is a finite $\S''$-algebra $\A$ such that $\A \not\models \S'$, which contradicts the definition of $\S''$ since $\A \not\models \rho(\A, A)$. So, $\S' \subseteq \S''$. Conversely, if $\S'' \not\subseteq \S'$, then there is an $\S'$-algebra $\B$ such that $\B \not\models \rho(\A, A)$ for some finite $\S$-algebra $\A$ such that $\A \not\models \S'$. So, $\A \inj \B$, and since $\U(\S')$ is closed under subalgebras, $\A \in \U(\S')$, which contradicts $\A \not\models \S'$. So, $\S'' \subseteq \S'$, and therefore $\S' = \S''$.

    $(3) \Rightarrow (4)$: This is clear.

    $(4) \Rightarrow (1)$: Let $\B \in \U(\S)$ such that $\B$ is not of finite height. For any finite modal algebra $\A$ of finite height, since being of finite height reflects stable subalgebras (\Cref{4: Lem stable subalgebra reflects fh}), $\A \not\inj_\emp \B$, so $\B \models \rho(\A, D)$ for any $D \subseteq A$. So, if $\S'$ is axiomatized over $\S$ by stable canonical rules of finite modal algebras of finite height, then $\B \models \S'$. Thus, $\U(\S) \subseteq \U(\S') \cup \FH$.

    Next, we show that $(3) \Rightarrow (5)$ and $(5) \Rightarrow (2)$, and thus completing the proof.

    $(3) \Rightarrow (5)$: This is clear.
    
    $(5) \Rightarrow (2)$: Let $\B \in \U(\S)\fin$ such that $\B$ is not of finite height. By a similar argument as in the case $(4) \Rightarrow (1)$, it follows that $\B \models \S + \{\rho(\A, A): \A \in \U(\S)\fin \cap \FH\}$, so $\B \models \S'$ by assumption (5). Thus, $\U(\S)\fin \subseteq \U(\S') \cup \FH$.

    Finally, as stable rule systems have the fmp, it is a straightforward generalization of \Cref{4: Thm stable rule system and logic and stable rule} (see also \cite[Theorem 7.10]{stablecanonicalrules}) that, $\S'$ is a union-splitting of $\NExt{\S}$ iff $\S'$ is axiomatizable over $\S$ by Jankov rules. So, the condition (3) implies being a union-splitting. Thus, any rule system $\S' \in \NExt{\S}$ satisfying (one of) the conditions (1)-(5) is a union-splitting in $\NExt{\S}$.
\end{proof}

The subclass of union-splittings of $\NExt{\S}$ in question is a lower part of the class of all union-splittings of $\NExt{\S}$ because of item (5) in \Cref{4: Thm characterization of splitting rule system}.

\begin{remark}
    For a finitely axiomatizable stable rule system $\S$, it also follows from \Cref{4: Thm characterization of splitting rule system} that this lower part of the union-splittings of $\NExt{\S}$ is decidable. More specifically, this means that there is an algorithm that, given finitely many rules $\rho_0, \dots, \rho_n$, decides whether the rule system $\S + \rho_0 + \dots + \rho_n$ falls in this class. Item (3) implies that the decision problem is recursively enumerable, and item (2) implies that the complement is recursively enumerable. A similar idea is used in \cite{takahashi2025decidabilityunionsplitting} to prove the decidability of being a union-splitting logic in $\NExt{\K}$.
\end{remark}

It is unknown if all union-splittings in $\NExt{\S}$ have the fmp for a stable rule system $\S$. This is expected since all finite $\S$-algebras split $\NExt{\S}$, which is similar to the case of $\NExt{\Kf}$ and contrary to the case of $\NExt{\K}$.

\subsection{Union-splittings of the lattice of extensions of strictly stable logics}

Next, we turn to logics. Our first application is intended to generalize Blok's fmp result that every union-splitting in $\NExt{\K}$ has the fmp. We start with a simple observation on $\Kff{m+1}{1}$-spaces. This is also helpful in the current case because the dual space of a modal algebra of finite height, in particular, a finite cycle-free modal space, is a $\Kff{m+1}{1}$-space for some $m \in \omega$.

\begin{lemma} \label{4: Lem K4m1 toporoot}
    Let $\X$ be a $\Kff{m+1}{1}$-space. A point $x \in \X$ is a topo-root iff it is a root.
\end{lemma}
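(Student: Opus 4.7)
The plan is to show that, in a $\Kff{m+1}{1}$-space, the set $R^{<\omega}[x]$ is already topologically closed, so that the condition ``its closure is $X$'' degenerates to ``the set itself is $X$''. The ``if'' direction is trivial because $R^{<\omega}[x] = X$ immediately gives $\overline{R^{<\omega}[x]} = X$, so all the work sits in the ``only if'' direction.

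First I would exploit pre-transitivity. The axiom $\Dia^{m+1}p \to \Dia p$ defines $R^{m+1} \subseteq R$ on modal spaces, and a quick induction on $k$ gives $R^{m+k}[x] \subseteq R^{k}[x]$ for every $k \geq 1$. Subtracting $m$ repeatedly, any $R^n[x]$ with $n \geq m+1$ lands inside some $R^k[x]$ with $1 \leq k \leq m$. Hence
\[
R^{<\omega}[x] \;=\; \{x\} \cup R[x] \cup R^2[x] \cup \cdots \cup R^m[x],
\]
a \emph{finite} union.

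Second I would argue that each $R^k[x]$ is closed in $\X$. The cases $k = 0, 1$ are immediate from the definition of a modal space. For $k \geq 2$ the key ingredient is the standard fact that in a modal space the relation $R$ is a closed subset of $X \times X$: given $(x, y) \notin R$, choose a clopen $U$ with $R[x] \subseteq U$ and $y \notin U$ (possible because $R[x]$ is closed and $X$ is a Stone space), and observe that $\Box U \times (X \setminus U)$ is a clopen neighbourhood of $(x, y)$ disjoint from $R$. Once $R$ is closed in the compact Hausdorff product, the identity $R[C] = \pi_2\bigl(R \cap (C \times X)\bigr)$ shows that $R$-images of closed sets are closed, and iterating this gives $R^k[x]$ closed for every $k$. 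A finite union of closed sets being closed, $R^{<\omega}[x]$ is closed, so $\overline{R^{<\omega}[x]} = X$ forces $R^{<\omega}[x] = X$, i.e., $x$ is a root.

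The only mild obstacle is invoking the closedness of $R$ in $X \times X$ cleanly, since it is a structural property of modal spaces rather than one of their defining clauses; everything else is bookkeeping with the pre-transitivity inclusion $R^{m+1} \subseteq R$.
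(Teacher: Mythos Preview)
Your proposal is correct and follows essentially the same route as the paper: both arguments observe that pre-transitivity collapses $R^{<\omega}[x]$ to the finite union $R^{\leq m}[x]$, note that this set is closed, and conclude that the closure condition in the definition of topo-root becomes vacuous. The paper simply asserts that $R^{\leq m}[x]$ is closed, whereas you supply the details (closedness of $R$ in $X\times X$, hence closedness of $R$-images of closed sets, hence closedness of each $R^k[x]$); this is a harmless elaboration rather than a different approach.
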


\begin{proof}
    Since $\X$ is a $\Kff{m+1}{1}$-space, we have $R^{<\omega}[x] = R^{\leq m}[x]$, which is closed. So, the closure of $R^{<\omega}[x]$ coincides with itself, hence $x$ is a topo-root iff $x$ is a root.
\end{proof}

\begin{remark}
    It is clear from the proof that \Cref{4: Lem K4m1 toporoot} holds for a logic $L$ as long as there is some $m \in \omega$ such that any $L$-space satisfies $R^{<\omega}[x] = R^{\leq m}[x]$ for each point $x$ in the space. This condition holds for logics $\wKf = \K + \Dia \Dia p \to p \lor \Dia p$ and $\Kff{m}{n}$ where $n < m$ as well.
\end{remark}

Note that this does not imply that being topo-rooted and being rooted are equivalent for $\Kff{m+1}{1}$-spaces. To be topo-rooted, the set of topo-roots has to have a non-empty interior, which is stronger than just being non-empty.

Unlike the other two cases, we cannot deal with all stable logics because, in general, their characterization is not as strong as that of stable rule systems or $\Kf$-stable logics. If $L$ is a $\Kf$-stable logic, then the class $\V(L)\si$ is finitely $\Kf$-stable \cite{stablemodallogic}, that is, it is closed under finite stable $\Kf$-subalgebras. This fact generalizes to $\Kff{m+1}{1}$-stable logics (\Cref{4: Lem finitely K4m1 stable}). However, for a stable logic $L$, the class $\V(L)\si$ may not be closed under finite stable subalgebras. 

\begin{definition}
    A logic $L$ is \emph{strictly stable} if the class $\V(L)\si$ is finitely stable, that is, it is closed under finite stable subalgebras.
\end{definition}

Examples of strictly stable logics include $\K$, $\logic{KD}$, and $\logic{KT}$.

\begin{lemma} \label{4: Lem ss logic splitting preserved}
    Let $L$ be a strictly stable logic. If $\{\epsilon(\A_i, D_i): i \in I\}$ is a set of stable canonical formulas (so that each $\A_i$ is a finite s.i.~modal algebra of finite height and $D_i \subseteq A_i$), then the logic
    \[L + \{\epsilon(\A_i, D_i): i \in I\}\]
    is preserved by the Subdivision Construction over $\K$.
\end{lemma}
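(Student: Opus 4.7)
The plan is to verify the two components of $L' := L + \{\epsilon(\A_i, D_i) : i \in I\}$ separately at $\F'$: first that $\F' \models L$, and then that $\F' \models \epsilon(\A_i, D_i)$ for each $i \in I$.

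For the first component, I would invoke strict stability directly. The Subdivision map $f' : \X \surj_\emp \F'$ dualises to a stable embedding $(\F')^* \inj_\emp \X^*$. Since $\X$ is topo-rooted, $\X^*$ is s.i.~(\Cref{4: Thm si toporooted}), and from $\X \models L$ we have $\X^* \in \V(L)\si$. Strict stability of $L$ says that $\V(L)\si$ is closed under finite stable subalgebras, so $(\F')^* \in \V(L)\si$; in particular $\F' \models L$.

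For the second component, I would argue by contradiction. Suppose $\F' \not\models \epsilon(\A_i, D_i)$ for some $i$, and let $\F_i$ denote the dual of $\A_i$ and $\D_i := \sigma(D_i)$. By the dual of \Cref{3: Thm splitting fml char}, there exist a rooted (equivalently, topo-rooted, since $\F'$ is finite) closed upset $\Y$ of $\F'$ and a stable surjection $h : \Y \surj_{\D_i} \F_i$. Since $\A_i$ is of finite height, $\F_i$ is cycle-free by \Cref{4: Prop cf dual to fh}, and the dual of \Cref{4: Lem stable subalgebra reflects fh} then forces $\Y$ to be cycle-free, so every point of $\Y$ has finite rank in $\F'$. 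Put $Z := (f')^{-1}(\Y)$: by continuity of $f'$ and finiteness of $\F'$, $Z$ is clopen, and by stability of $f'$ together with $\Y$ being an upset, $Z$ is an upset of $\X$. The key input from the Subdivision Lemma (\Cref{4: Lem main}) is that $f'$ is a p-morphism at every $x$ with $\r{f'(x)} < \omega$; hence $f'|_Z : Z \twoheadrightarrow \Y$ is a p-morphism, and $h \circ f'|_Z : Z \surj_{\D_i} \F_i$ is a stable surjection whose CDC for $\D_i$ follows because its first factor, being a p-morphism, satisfies CDC for every clopen set.

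Dually, writing $\B := \X^*$ and letting $Z^*$ denote the homomorphic image of $\B$ corresponding to $Z$, we have a stable embedding $\A_i \inj_{D_i} Z^*$. To conclude $\X \not\models \epsilon(\A_i, D_i)$ via \Cref{3: Thm splitting fml char}, I need to exhibit an s.i.~homomorphic image $\C$ of $\B$ into which $\A_i$ still embeds stably with CDC for $D_i$; this transfer from the homomorphic image $Z^*$ to an s.i.~one while preserving the embedding is the main obstacle. I would resolve it by a standard subdirect-representation argument: write $Z^* \inj \prod_j \C_j$ as a subdirect product of its s.i.~homomorphic images, and pick a pair $(a_0, b_0)$ witnessing the monolith of the s.i.~algebra $\A_i$. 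Since the images $h(a_0), h(b_0) \in Z^*$ are distinct, some coordinate $q_j : Z^* \twoheadrightarrow \C_j$ separates them, so $q_j \circ h : \A_i \to \C_j$ has trivial kernel (any non-trivial congruence of $\A_i$ contains the monolith). Stability and CDC for $D_i$ transfer along the modal-algebra homomorphism $q_j$, giving $\A_i \inj_{D_i} \C_j$ with $\C_j$ an s.i.~homomorphic image of $\B$; then \Cref{3: Thm splitting fml char} yields $\X \not\models \epsilon(\A_i, D_i)$, contradicting $\X \models L'$.
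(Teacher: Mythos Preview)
Your first step ($\F' \models L$ via strict stability) matches the paper's argument. The gap is in your final subdirect-representation step.

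You argue that if $q_j$ separates the monolith witness $(a_0, b_0)$ of $\A_i$, then $q_j \circ h$ has trivial kernel because ``any non-trivial congruence of $\A_i$ contains the monolith.'' But $q_j \circ h$ is only a \emph{stable} Boolean homomorphism, not a modal-algebra homomorphism, so its kernel is only a Boolean congruence on $\A_i$; there is no reason it should contain the \emph{modal} monolith. Concretely, take $\F_i = \{a,b,c\}$ with $aRb$, $aRc$; then $\A_i = \F_i^*$ is s.i.\ with monolith witnessed by $(0,\{a\})$. Let $\Y_0$ be $\F_i$ together with an extra isolated dead end $y_0$, and let $h_*\colon \Y_0 \to \F_i$ send $y_0 \mapsto a$ and be the identity elsewhere. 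This is a stable surjection, so $h\colon \A_i \inj_\emp \Y_0^*$. The projection $q$ to the s.i.\ quotient $\{y_0\}^*$ separates $h(0)=0$ from $h(\{a\}) \ni y_0$, yet $q(h(\{b\})) = q(h(\{c\})) = 0$, so $q \circ h$ is not injective. Thus separating the monolith witness does not force injectivity of $q_j \circ h$.

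The paper avoids this problem by never passing through a non-s.i.\ intermediate algebra. Instead of taking the whole preimage $Z = (f')^{-1}(\Y)$, it chooses a single $x_0 \in (f')^{-1}(s_0)$ for the root $s_0$ of $\Y$ and sets $\X' = R^{<\omega}[x_0]$. Cycle-freeness of $\Y$ forces $(f')^{-1}(s_0) \cap \X' = \{x_0\}$, so $\{x_0\}$ is clopen in $\X'$ and $\X'$ is topo-rooted outright; then $f'\restriction\X'$ is a surjective p-morphism onto $\Y$, and composing with $h$ gives $\X' \surj_{\D_i} \F_i$ with $\X'$ already topo-rooted. Your argument can be repaired, but not by the monolith trick applied to the stable map: one correct fix is to note that $\Y^*$ is itself s.i.\ (finite rooted) and a \emph{modal} subalgebra of $Z^*$ via the p-morphism $f'|_Z$, then apply the standard monolith argument to the modal embedding $\Y^* \inj Z^*$ to obtain an s.i.\ quotient $\C$ of $Z^*$ with $\Y^* \inj \C$, and finally compose with $\A_i \inj_{D_i} \Y^*$.
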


\begin{proof}
    Let $L' = L + \{\epsilon(\A_i, D_i): i \in I\}$. Let $\X = (X, R)$ be a topo-rooted $L'$-space, $\F$ be a finite modal space, $\D \subseteq \pow{F}$, and $f: \X \surj_\D \F$ be a stable map. Applying the Subdivision Construction (\Cref{4: Lem main}), we obtain a finite modal space $\F'$ and a stable map $f': \X \surj_\emp \F'$ such that $f'$ is a p-morphism for $x \in \X$ with $\r{f'(x)} < \omega$. Since $L$ is strictly stable, the class $\V(L)\si$ is finitely stable. The dual algebra of $\X$ is in $\V(L)\si$ because $\X$ is topo-rooted and $\X \models L$. So, the dual algebra of $\F'$ is in $\V(L)\si$ by the dual of $\X \surj_\emp \F'$, hence $\F' \models L$.

    Suppose for a contradiction that $\F' \not\models \epsilon(\A_i, D_i)$ for some $i \in I$. Let $\F_i$ be the dual space of $\A_i$ and $\D_i = \sigma[D_i]$. Then, by the dual of \Cref{3: Thm splitting fml char}, there is a topo-rooted closed upset $\F'' \subseteq \F'$ such that $\F'' \surj_{\D_i} \F_i$. Since $\F'$ is finite, $\F''$ is also finite, so it is rooted. Since $\A_i$ is of finite height, $\F_i$ is cycle-free by \Cref{4: Prop cf dual to fh}, and so is $\F''$ by the dual of \Cref{4: Lem stable subalgebra reflects fh}. It follows from \Cref{4: Prop cyclefree char} that there is some $N \in \omega$ such that any point in $\F''$ has rank $\leq N$. Let $s_0$ be the root of $\F''$ and $x_0 \in f'^{-1}(s_0)$. Then $\r{x_0} \leq \r{s_0} \leq N$ by \Cref{4: Lem stable map rank non-decrease}. Let $X' = R^{< \omega}[x_0] = R^{\leq N}[x_0]$ and $\X'$ be $X'$ with the topology and relation induced by $\X$. Then $\X'$ is a closed upset of $\X$ and $x_0$ is a root of $\X'$. Moreover, since every point in $\X'$ has rank $\leq N$, $\X'$ is a $\Kff{N+1}{1}$-space, so $x_0$ is a topo-root of $\X'$ by \Cref{4: Lem K4m1 toporoot}. Since $\r{s_0} < \omega$, we have $f'^{-1}(s_0) \cap \X' = \{x_0\}$, so $\{x_0\}$ is a clopen set in $\X'$, and it follows that $\X'$ is topo-rooted.
    \begin{center}
        \adjustbox{scale=1,center}{
\begin{tikzcd}
\X \arrow[rr, "f'", two heads]                                                                                     &  & \F'                                               &  &      \\
                                                                                                                   &  &                                                   &  &      \\
\X' \arrow[uu, hook] \arrow[rr, "f' {\upharpoonright} \X'"', two heads] \arrow[rrrr, "h'"', two heads, bend right] &  & \F'' \arrow[uu, hook] \arrow[rr, "h"', two heads] &  & \F_i
\end{tikzcd}
        }
    \end{center}

    Recall that $f': \X \surj_\emp \F'$ is surjective and stable. Since $f': \X \surj_\emp \F'$ is a p-morphism for $x \in \X'$ with $\r{f'(x)} < \omega$ and every point in $\F''$ has rank $\leq N$, the restricted map $f' {\upharpoonright} \X': \X' \surj \F''$ is a surjective p-morphism. Let $h:\F'' \surj_{\D_i} \F_i$ and $h' = h \circ f' {\upharpoonright} \X'$. Then, we have $h': \X' \surj_{\D_i} \F_i$ by composition. Thus, $\X'$ is a topo-rooted closed upset of $\X$ such that $\X' \surj_{\D_i} \F_i$. By the dual of \Cref{3: Thm splitting fml char}, this means $\X \not\models \epsilon(\A_i, D_i)$, which contradicts $\X \models L'$. Thus, $\F' \models \epsilon(\A_i, D_i)$ for all $i \in I$, and therefore $\F' \models L'$.
\end{proof}

\begin{theorem} \label{4: Thm fmp ss logic + splitting fml}
    Let $L$ be a strictly stable logic. If $\{\epsilon(\A_i, D_i): i \in I\}$ is a set of stable canonical formulas, then the logic
    \[L + \{\epsilon(\A_i, D_i): i \in I\}\]
    has the fmp.
\end{theorem}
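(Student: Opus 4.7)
The plan is to obtain this theorem as an immediate corollary of the general scheme \Cref{4: Thm main scheme}, in exactly the same way as \Cref{4: Thm fmp S + sp rule} was deduced in the rule-system case. The hypothesis of the scheme needs a base logic that admits definable filtration and whose Subdivision-preservation over that base has already been verified; I would supply $\K$ in the first role and invoke the freshly proved \Cref{4: Lem ss logic splitting preserved} for the second.

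Concretely, set $L' = L + \{\epsilon(\A_i, D_i): i \in I\}$. Since $\K$ is mentioned in the preliminaries as admitting the least and greatest filtration, it admits definable filtration, and clearly $L' \supseteq \K$. By \Cref{4: Lem ss logic splitting preserved}, the Subdivision Construction preserves $L'$ over $\K$. Therefore \Cref{4: Thm main scheme}(2) applies and yields the fmp of $L'$.

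There is essentially no obstacle here: the substance of the argument has been absorbed into \Cref{4: Lem ss logic splitting preserved}, where the delicate step was to recover a topo-rooted closed upset $\X'$ of $\X$ on which $f'$ restricts to a p-morphism, using cycle-freeness of the finite s.i.~algebra of finite height. Once that is in place, the present theorem is a one-line invocation of the general scheme, parallel to the deduction of \Cref{4: Thm fmp S + sp rule} from \Cref{4: Lem stable canonical rule preserved}.
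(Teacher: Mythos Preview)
Your proposal is correct and matches the paper's own proof essentially verbatim: the paper also derives the theorem as a one-line application of \Cref{4: Thm main scheme}(2), taking $\K$ as the base logic that admits (definable) filtration and invoking \Cref{4: Lem ss logic splitting preserved} for the preservation hypothesis.
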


\begin{proof}
    This follows from \Cref{4: Thm main scheme} with \Cref{4: Lem ss logic splitting preserved} and the fact that $\K$ admits (definable) filtration.
\end{proof}

This immediately implies the following fmp result by Blok \cite{blok1978degree}.

\begin{corollary} \label{4: Cor K-union-splittings have fmp}
    Every union-splitting in $\NExt{\K}$ has the fmp.
\end{corollary}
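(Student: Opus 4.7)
The plan is to combine the lattice-theoretic characterization of union-splittings in $\NExt{\K}$ with the general fmp result just established. By \Cref{4: Thm K splitting and Jankov formula}, any union-splitting $L$ in $\NExt{\K}$ is axiomatizable over $\K$ by a family of Jankov formulas of the form $\{\epsilon(\A_i, A_i) : i \in I\}$, where each $\A_i$ is a finite subdirectly irreducible modal algebra of finite height. These Jankov formulas are, by definition, a special case of the stable canonical formulas $\epsilon(\A, D)$ admitted by \Cref{4: Thm fmp ss logic + splitting fml}, namely with the closed domain $D_i = A_i$.

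Next I would invoke the fact that $\K$ itself is strictly stable, as recorded in the examples immediately after the definition of strict stability. With $L = \K$ and the axiomatizing family $\{\epsilon(\A_i, A_i) : i \in I\}$, the hypotheses of \Cref{4: Thm fmp ss logic + splitting fml} are satisfied verbatim, and the conclusion yields the finite model property for the union-splitting.

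There is essentially no obstacle in the argument: the corollary reduces immediately to the general theorem, and all the genuine work has already been done in proving the Subdivision Lemma (\Cref{4: Lem main}) and in verifying (via \Cref{4: Lem ss logic splitting preserved}) that the Subdivision Construction preserves logics of the form $L + \{\epsilon(\A_i, D_i)\}$ over a strictly stable base $L$. The only thing to double-check is that the finite-height requirement on the $\A_i$ from \Cref{4: Thm K splitting and Jankov formula} matches the finite-height requirement in the definition of $\epsilon(\A, D)$, which it does by construction.
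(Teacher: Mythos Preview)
Your proposal is correct and follows essentially the same approach as the paper's proof: observe that $\K$ is strictly stable, invoke \Cref{4: Thm K splitting and Jankov formula} to write any union-splitting as $\K + \{\epsilon(\A_i, A_i): i \in I\}$, and then apply \Cref{4: Thm fmp ss logic + splitting fml}. The paper's proof is just a one-line version of exactly this argument.
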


\begin{proof}
    $\K$ is clearly a strictly stable logic. Thus, the statement follows from \Cref{4: Thm K splitting and Jankov formula,4: Thm fmp ss logic + splitting fml}.
\end{proof}

Similar to the case of rule systems, we can show that, for a strictly stable logic $L$, varying the closed domain condition and using finite s.i.~modal algebras outside $\V(L)$ do not add more axiomatization power, as long as the algebras are of finite height. The logics mentioned in \Cref{4: Thm fmp ss logic + splitting fml} are precisely those satisfying the condition (2) in the following theorem, and they form a subclass of union-splittings of the lattice $\NExt{L}$. Here, Jankov formulas are stable canonical formulas of the form $\epsilon(\A, A)$.

\begin{theorem} \label{4: Thm characterization of ss splitting logics}
    Let $L$ be a strictly stable logic. For any logic $L' \in \NExt{L}$, the following are equivalent:
    \begin{enumerate}
        \item[(1)] $L'$ is axiomatized over $L$ by Jankov formulas of finite s.i.~$L$-algebras of finite height,
        \item[(2)] $L'$ is axiomatized over $L$ by stable canonical formulas of finite s.i.~modal algebras of finite height.
    \end{enumerate}
    Moreover, any logic $L' \in \NExt{L}$ satisfying (one of) the above conditions is a union-splitting in $\NExt{L}$.
\end{theorem}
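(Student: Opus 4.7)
The plan is as follows. The implication $(1) \Rightarrow (2)$ is immediate, since every Jankov formula $\epsilon(\A, A)$ is a stable canonical formula (the special case $D = A$), and every finite s.i.~$L$-algebra is in particular a finite s.i.~modal algebra. The content lies in $(2) \Rightarrow (1)$ and the union-splitting claim. For $(2) \Rightarrow (1)$, I would form the candidate
\[
L'' \;=\; L + \bigl\{\epsilon(\C, C) : \C \text{ is a finite s.i.~}L\text{-algebra of finite height with } \C \not\models L'\bigr\}.
\]
By construction $L''$ has the form in (1), and by \Cref{4: Thm fmp ss logic + splitting fml} it has the fmp. The task is to establish $L' = L''$ by two inclusions.

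For $L' \subseteq L''$, suppose otherwise; by the fmp of $L''$, pick a finite $L''$-algebra $\A$ refuting $L'$. Since $\A$ is an $L$-algebra, it must refute some axiom $\epsilon(\A_i, D_i)$ from~(2). By \Cref{3: Thm splitting fml char}, there is a s.i.~homomorphic image $\C$ of $\A$ with $\A_i \inj_{D_i} \C$. Then $\C$ is finite and s.i.; it is an $L$-algebra as a homomorphic image of $\A$; and it has finite height by \Cref{4: Lem stable subalgebra reflects fh}, since $\A_i$ does. Taking $\C$ as its own s.i.~homomorphic image also gives $\C \not\models \epsilon(\A_i, D_i)$, hence $\C \not\models L'$. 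So $\epsilon(\C, C)$ is a defining axiom of $L''$, and since $\C$ is a homomorphic image of $\A \models L''$ we have $\C \models \epsilon(\C, C)$. But the identity $\C \inj_C \C$ forces $\C \not\models \epsilon(\C, C)$ by \Cref{3: Thm splitting fml char}, a contradiction.

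For $L'' \subseteq L'$, take an axiom $\epsilon(\A, A)$ of $L''$, so $\A$ is a finite s.i.~$L$-algebra of finite height with $\A \not\models L'$. If $\epsilon(\A, A) \notin L'$, some $L'$-algebra $\B$ refutes it; by \Cref{3: Thm splitting fml char} there is a s.i.~homomorphic image $\C$ of $\B$ with $\A \inj_A \C$. Since a stable embedding with CDC for the entire domain is a modal algebra embedding, $\A$ is a genuine subalgebra of $\C \models L'$. Closure of $\V(L')$ under $\S$ and $\H$ then yields $\A \models L'$, contradicting $\A \not\models L'$.

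For the union-splitting claim, I would verify that for every finite s.i.~$L$-algebra $\A$ of finite height, $(L + \epsilon(\A, A),\ \Log(\A))$ is a splitting pair in $\NExt{L}$. The non-inclusion $L + \epsilon(\A, A) \not\subseteq \Log(\A)$ holds because the identity embedding $\A \inj_A \A$ yields $\A \not\models \epsilon(\A, A)$ via \Cref{3: Thm splitting fml char}. For the dichotomy, if some $L_1 \in \NExt{L}$ does not contain $\epsilon(\A, A)$, an $L_1$-algebra refuting it yields via \Cref{3: Thm splitting fml char} an s.i.~homomorphic image containing $\A$ as an ordinary subalgebra, so $\A \models L_1$ and hence $L_1 \subseteq \Log(\A)$. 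Thus each $L + \epsilon(\A, A)$ is a splitting in $\NExt{L}$, and $L' = L + \{\epsilon(\A_i, A_i) : i \in I\}$ from~(1) is a join of splittings, i.e.~a union-splitting. The main obstacle is the $L' \subseteq L''$ step: one must verify that the s.i.~homomorphic image $\C$ produced by \Cref{3: Thm splitting fml char} really is a finite s.i.~$L$-algebra of finite height refuting $L'$, so that $\epsilon(\C, C)$ is legitimately among the defining axioms of $L''$; the essential ingredients are closure of $L$-algebras under $\H$, the reflection of finite height by stable subalgebras (\Cref{4: Lem stable subalgebra reflects fh}), and the self-refutation $\C \not\models \epsilon(\A_i, D_i)$ witnessed by the identity.
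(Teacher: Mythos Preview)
Your proof is correct, but it takes a different route from the paper's in the $(2)\Rightarrow(1)$ direction. You define $L''$ using \emph{all} finite s.i.\ $L$-algebras of finite height refuting $L'$, and then use the fmp of $L''$ (\Cref{4: Thm fmp ss logic + splitting fml}) as a black box to obtain a finite $L''$-algebra refuting $L'$; from there a short algebraic argument produces the contradiction. This mirrors the strategy of the rule-system case (\Cref{4: Thm characterization of splitting rule system}). The paper instead defines $L''$ via the narrower condition $\A_i \inj_{D_i} \A$ for some $i$, and in the hard direction works with an arbitrary (possibly infinite) $L$-algebra $\B$ refuting $L'$: it takes the s.i.\ homomorphic image $\B'$ containing $\A_i$, passes to the dual space, and re-applies the Subdivision Construction directly to manufacture a finite s.i.\ $L$-algebra $\A'$ of finite height with $\A_i \inj_{D_i} \A'$ and $\A' \hookrightarrow \B'$, whence $\B \not\models \epsilon(\A',A')$. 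Your approach is more modular and avoids reopening the Subdivision machinery, at the cost of relying on \Cref{4: Thm fmp ss logic + splitting fml}; the paper's approach is self-contained at this point but duplicates effort already packaged in that theorem. The union-splitting argument and the $L'' \subseteq L'$ direction are essentially the same in both.
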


\begin{proof}
    The direction $(1) \Rightarrow (2)$ is clear. For the converse, let 
    \[L' = L + \{\epsilon(\A_i, D_i): i \in I\}\]
    where each $\A_i$ is a finite s.i.~modal algebra of finite height. Let 
    \begin{align*}
        L'' &= L + \{\epsilon(\A, A): \A \text{ is a finite s.i.~$L$-algebra of finite height such that } \\
        & \text{\hspace{250pt}} \A_i \inj_{D_i} \A \text{ for some $i \in I$}\}.
    \end{align*}
    It suffices to show that $L' = L''$. Let $\B$ be an $L$-algebra and $\X$ be its dual space. 

    Suppose that $\B \not\models L''$. Then $\B \not\models \epsilon(\A, A)$ for some finite s.i.~$L$-algebra $\A$ of finite height such that $\A_i \inj_{D_i} \A$. By \Cref{3: Thm splitting fml char}, $\A$ is a subalgebra of a s.i.~homomorphic image $\B'$ of $\B$. Since $\A_i \inj_{D_i} \A$, we have $\A_i \inj_{D_i} \B'$ by composition, which implies $\B \not\models \epsilon(\A_i, D_i)$, hence $\B \not\models L'$.

    Conversely, suppose that $\B \not\models L'$. Then $\B \not\models \epsilon(\A_i, D_i)$ for some $i \in I$, and $\A_i \inj_{D_i} \B'$ for a s.i.~homomorphic image $\B'$ of $\B$ by \Cref{3: Thm splitting fml char}. Let $\X'$ and $\F_i$ be the dual space of $\B'$ and $\A_i$ respectively, and $\D_i = \sigma[D_i]$. Then $\X'$ is a topo-rooted closed upset of $\X$ and there is a stable map $f: \X' \surj_{\D_i} \F_i$. Since $\A_i$ is of finite height, so is $\B'$ by \Cref{4: Lem stable subalgebra reflects fh}, so $\B' \models \Kff{m+1}{1}$ for some $m \in \omega$. Since $\X'$ is topo-rooted, $\X'$ has a topo-root, which is also a root by \Cref{4: Lem K4m1 toporoot}, so $\X'$ is rooted. Applying the Subdivision Construction (\Cref{4: Lem main}) to $f: \X' \surj_{\D_i} \F_i$, we obtain a finite modal space $\F'$, a stable map $f': \X' \surj_\emp \F'$ such that $f'$ is a p-morphism for $x \in \X'$ with $\r{f'(x)} < \omega$, and a stable map $g: \F' \surj_{\D_i} \F_i$ such that $f = g \circ f'$. Let $\A'$ be the dual algebra of $\F'$. Then $\A_i \inj_{D_i} \A'$. Since $\A_i$ is of finite height, so is $\A'$ by \Cref{4: Lem stable subalgebra reflects fh}. So, $\F'$ is cycle-free by \Cref{4: Prop cf dual to fh}. Thus, $f': \X' \surj \F'$ is a p-morphism. Since $\X'$ is rooted and $f'$ is surjective and stable, $\F'$ is also rooted. Also, dually, we have that $\A'$ is a subalgebra of $\B'$. Since $\B'$ is a homomorphic image of $\B$ and $\B \models L$, we obtain $\A' \models L$. Then, $\A'$ is a finite s.i.~$L$-algebra of finite height such that $\A_i \inj_{D_i} \A'$. Thus, $\epsilon(\A', A') \in L''$ by definition. Moreover, since $\A' \inj \B'$ and $\B \surj \B'$, we have $\B \not\models \epsilon(\A', A')$ by \Cref{3: Thm splitting fml char}. So, $\B \not\models L''$, and therefore, $L' = L''$.

    Finally, we show that if $\A$ is a finite s.i.~$L$-algebra (of finite height), then $(L + \epsilon(\A, A), \Log(\A))$ is a splitting pair in $\NExt{L}$. This implies that a logic $L'$ satisfying (1) is a union-splitting in $\NExt{L}$. For any logic $L'' \in \NExt{L}$ such that $L + \epsilon(\A, A) \not\subseteq L''$, there is a modal algebra $\B$ such that $\B \models L''$ and $\B \not\models \epsilon(\A, A)$. So, $\A$ is a subalgebra of a s.i.~homomorphic image of $\B$, which implies $L'' \subseteq \Log(\B) \subseteq \Log(\A)$. Thus, $(L + \epsilon(\A, A), \Log(\A))$ is a splitting pair in $\NExt{L}$.
\end{proof}

If $L = \K$, then these logics are exactly the union-splittings of $\NExt{\K}$ by \Cref{4: Thm K splitting and Jankov formula}. A characterization of union-splittings of $\NExt{\K}$ similar to \Cref{4: Thm characterization of splitting rule system} is formulated and used in \cite{takahashi2025decidabilityunionsplitting} to show that the property of being a union-splitting is decidable in $\NExt{\K}$. However, note that for $L = \logic{KD}$ or $L = \logic{KT}$, there are no non-trivial logics obtained this way because there is no $\logic{KD}$-algebra or $\logic{KT}$-algebra of finite height. We leave it open if there is a strictly stable logic other than $\K$ such that \Cref{4: Thm fmp ss logic + splitting fml} yields non-trivial fmp results.

\subsection{Union-splittings of the lattice of extensions of pre-transitive stable logics}

Finally, we address pre-transitive logics. Recall that pre-transitive logics $\Kff{m}{n}$ are logics axiomatized by $\Dia^m p \to \Dia^n p$ (or equivalently, $\Box^n p \to \Box^m p$) over $\K$. The logic $\Kff{m}{n}$ defines the condition 
\[\forall x \forall y \: (x R^m y \to x R^n y),\]
where $R^k$ is the $k$-time composition of $R$, on modal spaces. Note that $\Kff{2}{1}$ is the transitive logic $\Kf$. We will be interested particularly in the pre-transitive logics $\Kff{m+1}{1}$ $(m \geq 1)$. Gabbay \cite{ageneralfiltrationmethod1972} constructed a definable filtration for these logics and thus proved the fmp of them. An algebraic presentation is provided in \cite{takahashi2025stablecanonicalrulesformulas}, through which the fmp of $\Kff{m+1}{1}$-stable logics is proved. We generalize these results further to a class of union-splittings in the lattice $\NExt{\Kff{m+1}{1}}$.

The following property of $\Kff{m+1}{1}$-stable logics follows from a straightforward generalization of a series of characterizations of $\Kf$-stable logics obtained in \cite{stablemodallogic} to $\Kff{m+1}{1}$-stable logics. Below, we provide a direct proof for completeness. 

\begin{lemma} \label{4: Lem finitely K4m1 stable}
    Let $m \geq 1$. If $L$ is a $\Kff{m+1}{1}$-stable logic, then the class $\V(L)\si$ is finitely $\Kff{m+1}{1}$-stable. 
\end{lemma}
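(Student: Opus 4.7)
My plan is to decompose the claim into two parts: showing that $\A$ is s.i.~and that $\A \models L$. The first part follows quickly by duality: since $\B$ is s.i., its dual space $\X_\B$ is topo-rooted by \Cref{4: Thm si toporooted}, and as a $\Kff{m+1}{1}$-space \Cref{4: Lem K4m1 toporoot} gives a root $x_0 \in \X_\B$. Dualizing $\A \inj_\emp \B$ yields a surjective stable map $f: \X_\B \surj_\emp \F_\A$, under which $f(x_0)$ becomes a root of $\F_\A$; since $\F_\A$ is finite and rooted, \Cref{4: Thm si toporooted} implies that $\A$ is s.i.

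For the second part, I would use the stable formula axiomatization (\Cref{4: Thm K4m1 stable logic and stable formula}) to write $L = \Kff{m+1}{1} + \{\gamma^m(\C_i, \emp) : i \in I\}$ with each $\C_i$ a finite s.i.~$\Kff{m+1}{1}$-algebra, and argue by contradiction. Suppose $\A \not\models \gamma^m(\C_j, \emp)$ for some $j$; by \Cref{3: Thm scf char} there is a s.i.~quotient $\A''$ of $\A$ and a stable embedding $\C_j \inj_\emp \A''$. Dualizing, $\A \surj \A''$ becomes a rooted closed upset $\Y \subseteq \F_\A$ with root $r_\Y$, and $\C_j \inj_\emp \A''$ becomes a surjective stable map $g: \Y \surj_\emp \F_{\C_j}$. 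The plan is to produce a topo-rooted closed upset $\Z^* \subseteq \X_\B$ together with a surjective stable map $\Z^* \surj_\emp \F_{\C_j}$: dualizing $\Z^*$ yields an s.i.~homomorphic image $\D$ of $\B$ with $\C_j \inj_\emp \D$, and since $\D \models L$ (as a homomorphic image of $\B$), the pair ($\D$ s.i., $\C_j \inj_\emp \D$) contradicts $\D \models \gamma^m(\C_j, \emp)$ via \Cref{3: Thm scf char}.

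The natural candidate for $\Z^*$ is $R^{\leq m}[z_1]$ where $z_1 \in f^{-1}(r_\Y)$: this is a closed upset of the $\Kff{m+1}{1}$-space $\X_\B$ with $z_1$ as a root (hence topo-root by \Cref{4: Lem K4m1 toporoot}), and stability of $f$ gives $f(\Z^*) \subseteq Q^{\leq m}[r_\Y] = Y$, so $g \circ (f \upharpoonright \Z^*)$ maps $\Z^*$ stably into $\F_{\C_j}$. The main obstacle is to verify that $\Z^*$ is topo-rooted (not merely that $z_1$ itself is a topo-root) and that the composition to $\F_{\C_j}$ is surjective. Topo-rootedness should follow from showing that $f^{-1}(r_\Y) \cap \Z^*$ is a clopen subset of $\Z^*$ all of whose points are roots of $\Z^*$: stability forces any $z$ in this intersection with $z_1 R^k z$ to satisfy $r_\Y Q^k r_\Y$, so when $r_\Y$ lies on no cycle in $\F_\A$ the intersection reduces to $\{z_1\}$ (which is clopen in $\Z^*$), yielding topo-rootedness directly; the cyclic case will need a refinement that isolates a single root-cluster. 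Surjectivity is the more delicate obstacle, since $f$ is only stable; a clean remedy is to apply Zorn's lemma to the family of closed upsets $\Z \subseteq f^{-1}[\Y]$ with $(g \circ f)(\Z) = F_{\C_j}$ (closed under directed intersection by compactness) and to show via union-irreducibility that a minimal such $\Z$ must itself be topo-rooted, invoking the rootedness of $\F_{\C_j}$ and the topo-rootedness of $\X_\B$.
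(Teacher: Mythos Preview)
Your argument for subdirect irreducibility of $\A$ is fine (and a bit quicker than the paper's, which uses a direct clopen argument rather than \Cref{4: Lem K4m1 toporoot}). The problem lies entirely in the second half.

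The two remedies you sketch for producing a topo-rooted $\Z^*$ with $(g\circ f)[\Z^*]=\F_{\C_j}$ work against each other, and neither succeeds. Take $m=1$ and let $\X_\B=\{x_0,a,b\}$ with $x_0$ a reflexive root seeing the reflexive points $a,b$ (no other relations); let $\F_\A=\{r_\A,u,v\}$ be the three-element $\Kf$-chain $r_\A<u<v$, with $f(x_0)=r_\A$, $f(a)=u$, $f(b)=v$; let $\Y=\{u,v\}$ with root $r_\Y=u$; and let $\F_{\C_j}$ be the two-element chain with $g(u)$ the root and $g(v)$ the top. Then $f^{-1}[\Y]=\{a,b\}$ consists of two incomparable reflexive points. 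Your first candidate $\Z^*=R^{\leq 1}[z_1]$ with $z_1\in f^{-1}(u)=\{a\}$ is the singleton $\{a\}$, which is topo-rooted but maps only to the root of $\F_{\C_j}$, so surjectivity fails. Your Zorn candidate is forced to be all of $\{a,b\}$, which is surjective but has no root, hence is not topo-rooted. The ``union-irreducibility'' you invoke does not help: there is no reason a minimal closed upset hitting every point of a finite rooted target via a merely \emph{stable} (not p-morphic) map should be topo-rooted. (In this example $\B$ \emph{does} refute $\gamma(\C_j,\emp)$ --- via $\X_\B$ itself --- but your construction, confined to $f^{-1}[\Y]$, cannot see this.)

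The paper avoids this lifting problem altogether. Instead of pushing the refutation back up to $\B$, it first observes that $\V(L)$ is generated by the \emph{finite} members $\class{K}\fin$ of the stable generating class (using that $\Kff{m+1}{1}$ admits definable filtration). From $\A\inj_\emp\B$ with $\B$ s.i.\ one gets $\B\not\models\gamma^m(\A,\emp)$, hence $\gamma^m(\A,\emp)\notin L$, hence some finite $\C\in\class{K}\fin$ refutes it, giving $\A\inj_\emp\C'$ for an s.i.\ quotient $\C'$ of $\C$. If $\C'=\C$ we are done; otherwise one adjoins a fresh reflexive root to $\F_\A$ to obtain $\F'$ and extends the stable map $\G'\surj_\emp\F_\A$ to $\G\surj_\emp\F'$ by sending $\G\setminus\G'$ to the new root. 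Then $\A'=(\F')^*\in\class{K}$ by stability of $\class{K}$, and $\A$ is a homomorphic image of $\A'$, so $\A\in\V(\class{K})=\V(L)$. The point is that everything happens among finite algebras in $\class{K}$; no delicate topo-rootedness argument inside $\X_\B$ is needed.
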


\begin{proof}
    Let $L$ be a $\Kff{m+1}{1}$-stable logic. Then $\V(L)$ is generated by a stable class $\class{K}$ of $\Kff{m+1}{1}$-algebras. We first show that $\V(L)$ is generated by the class $\class{K}\fin$ of finite members of $\class{K}$. It is clear that $\V(\class{K}\fin) \subseteq \V(\class{K}) = \V(L)$. For the converse, let $\phi \notin L$. Then $\frak{D} \not\models \phi$ for some $\frak{D} \in \class{K}$. Since $\Kff{m+1}{1}$ admits definable filtration \cite{ageneralfiltrationmethod1972} (see also \cite{takahashi2025stablecanonicalrulesformulas}), there is a definable filtration $\frak{D}'$ of $\frak{D}$ such that $\frak{D}' \models \Kff{m+1}{1}$ and $\frak{D}' \not\models \phi$. Then, $\frak{D}' \in \class{K}\fin$ since $\frak{D}'$ is finite and $\frak{D}' \inj_\emp \frak{D}$ and $\class{K}$ is stable. So, $\V(\class{K}\fin) \not\models \phi$. Thus, $\V(\class{K}\fin) = \V(L)$. Note that $\class{K}\fin$ is also stable. 
    
    Let $\B \in \V(L)\si$ and $\A$ be a finite $\Kff{m+1}{1}$-algebra such that $\A \inj_\emp \B$. Let $\X = (X, R)$ and $\F = (F, Q)$ be the dual space of $\B$ and $\A$ respectively. Then, there is a stable map $f: \X \surj_\emp \F$. Since $\B$ is s.i., $\X$ is topo-rooted. Let $x_0$ be a topo-root of $\X$. If $s \notin Q^{<\omega}[f(x_0)]$ for some $s \in \F$, then $f^{-1}(s)$ is a non-empty clopen subset of $\X$ such that $f^{-1}(s) \cap R^{<\omega}[x_0] = \emp$, so the closure of $R^{<\omega}[x_0]$ is contained in $\X {\setminus} f^{-1}(s)$, which contradicts $x_0$ being a topo-root of $\X$. Thus, $s \in Q^{<\omega}[f(x_0)]$ for all $s \in \F$, namely, $f(x_0)$ is a root of $\F$. So, $\F$ is rooted, and $\A$ is s.i. since it is finite. 

    Since $\A \inj_\emp \B$ and $\B$ is s.i., $\B \not\models \gamma^m(\A, \emp)$ by \Cref{3: Thm scf char}, so $\gamma^m(\A, \emp) \notin L$ by $\B \models L$. Then, since $\V(L)$ is generated by $\class{K}\fin$, there is some $\C \in \class{K}\fin$ such that $\C \not\models \gamma^m(\A, \emp)$. So, there is some s.i.~homomorphic image $\C'$ of $\C$ such that $\A \inj_\emp \C'$ by \Cref{3: Thm scf char}. Let $\G$ and $\G'$ be the dual space of $\C$ and $\C'$ respectively. Then, $\G'$ is a closed upset of $\G$ and there is a stable map $f: \G' \surj_\emp \F$. If $\G' = \G$, then $\C' = \C$, so $\A \in \class{K}\fin$ since $\class{K}\fin$ is stable, and hence $\A \in \V(L)\si$. Suppose that $\G' \neq \G$. 

    \begin{center}
        \adjustbox{scale=1,center}{
\begin{tikzcd}
\A' \arrow[d, two heads] \arrow[r, hook] & \C \arrow[d, two heads] &  & \G \arrow[r, "f'", two heads]                  & \F'                \\
\A \arrow[r, hook]                       & \C'                     &  & \G' \arrow[u, hook] \arrow[r, "f"', two heads] & \F \arrow[u, hook]
\end{tikzcd}
        }
    \end{center}

    Let $\F'$ be the modal space obtained by adding a fresh point $s_0$ to $\F$ such that $s_0$ sees every point in $\F'$. It is easy to verify that $\F'$ is a $\Kff{m+1}{1}$-space and $\F$ is a closed upset of $\F'$. We define $f': \G \to \F'$ by $f'(t) = f(t)$ if $t \in \G'$ and $f'(t) = s_0$ otherwise. Then, $f'$ is surjective because $f$ is surjective and $\G' \neq \G$. Moreover, $f'$ is stable because $f$ is stable and $s_0$ sees every point in $\F'$. Thus, since a map between two finite modal spaces is always continuous, we have $f': \G \surj_\emp \F'$. Let $\A'$ be the dual algebra of $\F'$. Then, $\A'$ is a $\Kff{m+1}{1}$-algebra such that $\A' \inj_\emp \C$, and $\A$ is a homomorphic image of $\A'$. This implies $\A' \in \class{K}\fin$ and $\A \in \V(\class{K}\fin) = \V(L)$. So, we conclude that the class $\V(L)\si$ is finitely $\Kff{m+1}{1}$-stable. 
\end{proof}

\begin{lemma} \label{4: Lem K4m1 preserve}
    Let $m \geq 1$ and $L$ be a $\Kff{m+1}{1}$-stable logic. If $\{\gamma^m(\A_i, D_i): i \in I\}$ is a set of stable canonical formulas where each $\A_i$ is a finite s.i.~$\Kff{m+1}{1}$-algebra of finite height, then the logic
    \[L + \{\gamma^m(\A_i, D_i): i \in I\}\]
    is preserved by the Subdivision Construction over $\Kff{m+1}{1}$.
\end{lemma}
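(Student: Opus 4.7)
The plan is to follow the blueprint of \Cref{4: Lem ss logic splitting preserved}, with one extra step inserted at the beginning to verify that the modal space $\F'$ produced by the Subdivision Construction still satisfies $\Kff{m+1}{1}$, so that \Cref{4: Lem finitely K4m1 stable} becomes applicable. Let $L' = L + \{\gamma^m(\A_i, D_i) : i \in I\}$; take a topo-rooted $L'$-space $\X$, a finite $\Kff{m+1}{1}$-space $\F$, $\D \subseteq \pow{F}$, and a stable map $f \colon \X \surj_\D \F$, and apply \Cref{4: Lem main} to obtain $\F'$, $f' \colon \X \surj_\emp \F'$, and $g \colon \F' \surj_\D \F$ with $g \circ f' = f$. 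The goal is to check $\F' \models L'$.

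The main new step is $\F' \models \Kff{m+1}{1}$. For this I would first record the auxiliary fact that, for any $p$ in the rank $\omega$ part of $\F'$ (identified with the rank $\omega$ part of $\F$ via $g$) and any $q \in \F'$, we have $p Q' q$ iff $p Q g(q)$; this follows by a short induction on the stages $\F_n$ of the Subdivision Construction, since for an old point $q \in F_n {\setminus} V$ the relation is unchanged while for a new point $q = w^v_s$ both sides reduce to $p Q_n v$. Given this, take any path $s_0 Q' s_1 Q' \cdots Q' s_{m+1}$ in $\F'$. If $\r{s_0} < \omega$ then all $s_i$ lie in the cycle-free part with strictly decreasing ranks, so the p-morphism property of $f'$ on finite-rank images from \Cref{4: Lem main} lifts the path to $x_0 R x_1 R \cdots R x_{m+1}$ in $\X$; then $\X \models \Kff{m+1}{1}$ gives $x_0 R x_{m+1}$ and stability of $f'$ yields $s_0 Q' s_{m+1}$. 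If $\r{s_0} = \omega$, let $i^*$ be the least index (if any) at which the path enters the finite-rank part; applying the auxiliary fact at the transition edge $s_{i^*-1} Q' s_{i^*}$ and stability of $g$ on the remaining edges converts the given path into a length-$(m+1)$ path $s_0 Q s_1 Q \cdots Q s_{i^*-1} Q g(s_{i^*}) Q \cdots Q g(s_{m+1})$ in $\F$, whence $\F \models \Kff{m+1}{1}$ yields $s_0 Q g(s_{m+1})$, and the auxiliary fact (or, if no transition occurs, the fact that $Q'$ and $Q$ agree on the rank $\omega$ part) gives $s_0 Q' s_{m+1}$.

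With $\F' \models \Kff{m+1}{1}$ secured, the dual $\A'$ of $\F'$ is a finite $\Kff{m+1}{1}$-algebra; since the dual $\B$ of $\X$ lies in $\V(L)\si$ (as $\X$ is topo-rooted and validates $L$) and $f'$ dualises to a stable embedding $\A' \inj_\emp \B$, \Cref{4: Lem finitely K4m1 stable} delivers $\A' \in \V(L)\si$, hence $\F' \models L$. For $\F' \models \gamma^m(\A_i, D_i)$ I would mirror the proof of \Cref{4: Lem ss logic splitting preserved}: were there a topo-rooted closed upset $\F'' \subseteq \F'$ with $\F'' \surj_{\D_i} \F_i$, then \Cref{4: Lem stable subalgebra reflects fh} would make $\F''$ cycle-free and hence finite rooted, and for its root $s_0$ and any $x_0 \in f'^{-1}(s_0)$ the set $\X' = \{x_0\} \cup R^{\leq m}[x_0]$ would be a topo-rooted closed upset of $\X$ (with topo-rootedness coming from $\{x_0\} = f'^{-1}(s_0) \cap \X'$ being clopen in $\X'$ together with \Cref{4: Lem K4m1 toporoot}), on which $f' {\upharpoonright} \X'$ is a surjective p-morphism onto $\F''$, so composition with $\F'' \surj_{\D_i} \F_i$ would produce $\X' \surj_{\D_i} \F_i$, contradicting $\X \models \gamma^m(\A_i, D_i)$. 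I expect the main obstacle to be the verification of $\F' \models \Kff{m+1}{1}$, particularly for paths that mix the rank $\omega$ and cycle-free parts of $\F'$ where neither $f'$ is a p-morphism nor $g$ is injective; the auxiliary fact is the device that bridges these two regimes.
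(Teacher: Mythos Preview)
Your proposal is correct and follows essentially the same approach as the paper. The paper's proof of \Cref{4: Lem K4m1 preserve} has the same three ingredients: a claim that $\F' \models \Kff{m+1}{1}$ proved by the same case split on $\r{s_0}$, then \Cref{4: Lem finitely K4m1 stable} to get $\F' \models L$, and finally a reduction to the argument of \Cref{4: Lem ss logic splitting preserved} for each $\gamma^m(\A_i, D_i)$. Your explicit ``auxiliary fact'' ($p Q' q \iff p Q g(q)$ for rank-$\omega$ $p$) is exactly what the paper invokes informally in its case $\r{s_0} = \omega$ by saying that $s_0$ ``remains the same as $g(s_0)$ through the construction'' and then splitting on whether $\r{g(s_{m+1})}$ is finite; your formulation is in fact cleaner. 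The detour through the transition index $i^*$ is unnecessary (plain stability of $g$ already gives $g(s_0) Q \cdots Q g(s_{m+1})$, and since $g(s_0)=s_0$ the auxiliary fact then finishes), but it is not wrong. Using $\X' = \{x_0\} \cup R^{\leq m}[x_0]$ rather than the paper's $R^{\leq N}[x_0]$ is fine here because $\X$ is a $\Kff{m+1}{1}$-space, so the two sets coincide.
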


\begin{proof}
    Let $L' = L + \{\gamma^m(\A_i, D_i): i \in I\}$. Let $\X = (X, R)$ be a topo-rooted $L'$-space, $\F = (F, Q)$ be a finite $\Kff{m+1}{1}$-space, and $f: \X \surj_\D \F$ be a stable map. Applying the Subdivision Construction (\Cref{4: Lem main}), we obtain a finite modal space $\F' = (F', Q')$, a stable map $f': \X \surj_\emp \F'$ such that $f'$ is a p-morphism for $x \in \X$ with $\r{f'(x)} < \omega$,  and a stable map $g: \F' \surj_\D \F$ such that $g$ is an isomorphism between the rank $\omega$ part of $\F'$ and that of $\F$, and $f = g \circ f'$. 

    \begin{claim} \label{4: Claim 1}
        $\F' \models \Kff{m+1}{1}$.
    \end{claim}

    \begin{proof}
        Let $s_0, \dots, s_{m+1} \in \F'$ be such that $s_iQ's_{i+1}$ for each $0 \leq i \leq m$. We show that $s_0Q's_{m+1}$. We divide cases by $\r{s_0}$.
        
        If $\r{s_0} < \omega$, then $\r{s_i} < \omega$ for all $0 \leq i \leq m+1$. Since $f': \X \surj_\emp \F'$ is surjective and is a p-morphism for $x \in \X$ with $\r{f'(x)} < \omega$, we obtain a chain $x_0R \cdots R x_{m+1}$ in $\X$ such that $f'(x_i) = s_i$ for each $0 \leq i \leq m+1$. Then $x_0 R x_{m+1}$ since $\X \models \Kff{m+1}{1}$ by assumption, which implies $s_0 Q' s_{m+1}$ since $f'$ is stable. 

        Suppose that $\r{s_0} = \omega$. Since $g: \F' \surj_\D \F$ is stable, we have $g(s_0) Q \cdots Q g(s_{m+1})$ in $\F$. Then $g(s_0) Q g(s_{m+1})$ since $\F \models \Kff{m+1}{1}$ by assumption. It follows from the Subdivision Construction that $s_0 = \<g(s_0), S\>$ and $s_{m+1} = \<g(s_{m+1}), S'\>$ for some $S, S' \subseteq F'$. Also, since $\r{s_0} = \omega$, we have $\r{g(s_0)} = \omega$ by \Cref{4: Lem stable map rank non-decrease} (thus, in fact, $S = \emp$). So, $s_0Q's_{m+1}$ by $g(s_0) Q g(s_{m+1})$ and the definition of $Q'$.
        
        Thus, in both cases we have $s_0Q's_{m+1}$. Hence, $\F' \models \Kff{m+1}{1}$.
    \end{proof}

    The class $\V(L)\si$ is finitely $\Kff{m+1}{1}$-stable by \Cref{4: Lem finitely K4m1 stable}. Since the dual algebra of $\X$ is in $\V(L)\si$, the dual algebra of $\F'$ is a finite $\Kff{m+1}{1}$-algebra by \Cref{4: Claim 1}, and the latter is a stable subalgebra of the former by $\X \surj_\emp \F'$, we see that the dual algebra of $\F'$ is in $\V(L)\si$. In particular, we have $\F' \models L$.

    Suppose for a contradiction that $\F' \not\models \gamma^m(\A_i, D_i)$ for some $i \in I$. Let $\F_i$ be the dual space of $\A_i$ and $\D_i = \sigma[D_i]$. Then, by the dual of \Cref{3: Thm scf char}, there is a topo-rooted closed upset $\F'' \subseteq \F'$ such that $\F'' \surj_{\D_i} \F_i$. A similar argument as in the proof of \Cref{4: Lem ss logic splitting preserved} yields a topo-rooted closed upset $\X'$ of $\X$ such that $\X' \surj_{\D_i} \F_i$. By the dual of \Cref{3: Thm scf char}, this means $\X \not\models \gamma^m(\A_i, D_i)$, which contradicts $\X \models L'$. Thus, $\F' \models \gamma^m(\A_i, D_i)$ for all $i \in I$, and therefore $\F' \models L'$.
\end{proof}

\begin{theorem} \label{4: Thm fmp K4m1 stable + sp}
    Let $m \geq 1$ and $L$ be a $\Kff{m+1}{1}$-stable logic. If $\{\gamma^m(\A_i, D_i): i \in I\}$ is a set of stable canonical formulas where each $\A_i$ is a finite s.i.~$\Kff{m+1}{1}$-algebra of finite height, then the logic
    \[L + \{\gamma^m(\A_i, D_i): i \in I\}\]
    has the fmp.
\end{theorem}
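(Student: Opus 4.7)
The plan is to derive this fmp result as an immediate consequence of the general scheme \Cref{4: Thm main scheme}(2) combined with the preservation lemma \Cref{4: Lem K4m1 preserve}, taking the base logic to be $\Kff{m+1}{1}$ and the extension to be $L' = L + \{\gamma^m(\A_i, D_i): i \in I\}$.

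To invoke \Cref{4: Thm main scheme}(2) I need two ingredients: first, that the base logic $\Kff{m+1}{1}$ admits definable filtration; second, that the Subdivision Construction preserves $L'$ over $\Kff{m+1}{1}$. The first is exactly the theorem of Gabbay \cite{ageneralfiltrationmethod1972} (with the algebraic reformulation in \cite{takahashi2025stablecanonicalrulesformulas}), already recorded in the preliminaries. The second is precisely the content of \Cref{4: Lem K4m1 preserve}, whose hypotheses are that $L$ is $\Kff{m+1}{1}$-stable and each $\A_i$ is a finite s.i.~$\Kff{m+1}{1}$-algebra of finite height --- both of which are assumed here.

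With these two ingredients in place, \Cref{4: Thm main scheme}(2) yields the fmp of $L'$ directly. There is essentially no obstacle: all the combinatorial and algebraic work has been absorbed into \Cref{4: Lem K4m1 preserve}, where the Subdivision Construction is shown to preserve both the pre-transitivity axiom $\Dia^{m+1}p \to \Dia p$ (via the case split on $\r{s_0}$ at the root of the chain witnessing failure) and the $\Kff{m+1}{1}$-stable logic $L$ (via \Cref{4: Lem finitely K4m1 stable}, which guarantees $\V(L)\si$ is finitely $\Kff{m+1}{1}$-stable), and refutations of the additional axioms $\gamma^m(\A_i, D_i)$ are pulled back from $\F'$ to $\X$ by extracting a topo-rooted closed upset of $\X$ using the finite-height hypothesis together with \Cref{4: Lem K4m1 toporoot}. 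The theorem is therefore a one-line deduction.
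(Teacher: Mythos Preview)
Your proposal is correct and matches the paper's own proof essentially verbatim: the paper derives the theorem in one line from \Cref{4: Thm main scheme} together with \Cref{4: Lem K4m1 preserve} and the fact that $\Kff{m+1}{1}$ admits definable filtration. Your summary of how those pieces fit together (and where the real work was already done in \Cref{4: Lem K4m1 preserve}) is accurate.
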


\begin{proof}
    This follows from \Cref{4: Thm main scheme} with \Cref{4: Lem K4m1 preserve} and the fact that $\Kff{m+1}{1}$ admits definable filtration \cite{ageneralfiltrationmethod1972} (see also \cite{takahashi2025stablecanonicalrulesformulas}).
\end{proof}

Similar to the previous two cases, we again obtain the result that, for a $\Kff{m+1}{1}$-stable logic $L$, varying the closed domain condition and using finite s.i.~$\Kff{m+1}{1}$-algebras outside $\V(L)$ does not add more axiomatization power, as long as the algebras are of finite height. The logics mentioned in \Cref{4: Thm fmp K4m1 stable + sp} are precisely those satisfying the condition (2) in the following theorem, and they form a subclass of union-splittings of the lattice $\NExt{L}$. Here, Jankov formulas are stable canonical formulas of the form $\gamma^m(\A, A)$.

\begin{theorem} \label{4: Thm characterization of K4m1 splitting logics}
    Let $m \geq 1$ and $L$ be a $\Kff{m+1}{1}$-stable logic. For any logic $L' \in \NExt{L}$, the following are equivalent:
    \begin{enumerate}
        \item[(1)] $L'$ is axiomatized over $L$ by Jankov formulas of finite s.i.~$L$-algebras of finite height,
        \item[(2)] $L'$ is axiomatized over $L$ by stable canonical formulas of finite s.i.~$\Kff{m+1}{1}$-algebras of finite height.
    \end{enumerate}
    Moreover, any logic $L' \in \NExt{L}$ satisfying (one of) the above conditions is a union-splitting in $\NExt{L}$.
\end{theorem}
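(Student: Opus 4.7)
The plan is to mirror closely the proof of \Cref{4: Thm characterization of ss splitting logics}, substituting the pre-transitive stable canonical formula $\gamma^m$ for $\epsilon$ and invoking the pre-transitive analogues of each intermediate fact. The direction (1)$\Rightarrow$(2) is immediate since every Jankov formula $\gamma^m(\A,A)$ is a stable canonical formula. For the converse, given $L' = L + \{\gamma^m(\A_i, D_i) : i \in I\}$, I will introduce the auxiliary logic
\[L'' = L + \{\gamma^m(\A, A) : \A \text{ is a finite s.i.\ $L$-algebra of finite height with } \A_i \inj_{D_i} \A \text{ for some } i \in I\}\]
and show $L' = L''$ by comparing refuting algebras.

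The easy inclusion is that any $\B$ refuting $L''$ also refutes $L'$: if $\B \not\models \gamma^m(\A, A)$ for such an $\A$, then by \Cref{3: Thm scf char} there is a s.i.\ homomorphic image $\B'$ of $\B$ with $\A \inj_A \B'$; composing with $\A_i \inj_{D_i} \A$ (legitimate since $D_i \subseteq A$) yields $\A_i \inj_{D_i} \B'$, so $\B \not\models \gamma^m(\A_i, D_i)$. The substantive inclusion is the reverse: suppose $\B \not\models \gamma^m(\A_i, D_i)$; by \Cref{3: Thm scf char} there is a s.i.\ homomorphic image $\B'$ of $\B$ with $\A_i \inj_{D_i} \B'$. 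Dualising gives a stable surjection $f : \X' \surj_{\D_i} \F_i$ with $\X'$ a topo-rooted closed upset of the dual of $\B$. I apply the Subdivision Construction (\Cref{4: Lem main}) to $f$ to obtain a finite modal space $\F'$, a stable surjection $f' : \X' \surj_\emp \F'$ that is a p-morphism on the finite-rank part, and a stable surjection $g : \F' \surj_{\D_i} \F_i$ with $f = g \circ f'$. Dually, the dual algebra $\A'$ of $\F'$ satisfies $\A_i \inj_{D_i} \A'$ and $\A' \inj_\emp \B'$.

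It then remains to verify that $\A'$ is an axiom of $L''$, i.e., that it is a finite s.i.\ $L$-algebra of finite height with $\A_i \inj_{D_i} \A'$. Finite height is inherited from $\A_i$ via \Cref{4: Prop cf dual to fh} and \Cref{4: Lem stable subalgebra reflects fh}. To see $\A'$ is s.i., observe that $\F'$ is cycle-free and rooted: the argument from the proof of \Cref{4: Lem ss logic splitting preserved} produces a rooted finite closed upset of $\F'$ surjecting stably onto $\F_i$, and combined with \Cref{4: Lem K4m1 toporoot} this is enough (in fact, one can argue directly that $f'(x_0)$ is a root of $\F'$ whenever $x_0$ is a topo-root of $\X'$, since all ranks are finite). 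The main obstacle is showing $\A' \models L$. For this I will first verify $\F' \models \Kff{m+1}{1}$ via the same case analysis as in \Cref{4: Claim 1}---splitting on whether the starting point of a potential $(m{+}1)$-chain has finite or infinite rank, and using respectively the p-morphism property of $f'$ on the finite-rank part and the identity property of $g$ on the rank-$\omega$ part. Once $\A'$ is known to be a finite $\Kff{m+1}{1}$-algebra with $\A' \inj_\emp \B' \in \V(L)\si$, the finite $\Kff{m+1}{1}$-stability of $\V(L)\si$ from \Cref{4: Lem finitely K4m1 stable} delivers $\A' \in \V(L)\si$.

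Finally, the union-splitting claim follows as in the strictly stable case: for each finite s.i.\ $L$-algebra $\A$ the pair $(L + \gamma^m(\A, A),\, \Log(\A))$ is a splitting pair in $\NExt{L}$, since for any $L'' \in \NExt{L}$ with $L + \gamma^m(\A, A) \not\subseteq L''$ there is a $\B \models L''$ with $\B \not\models \gamma^m(\A, A)$, so by \Cref{3: Thm scf char} $\A$ embeds into a s.i.\ homomorphic image of $\B$, giving $L'' \subseteq \Log(\B) \subseteq \Log(\A)$. A logic satisfying (1) is then by definition a join of such splittings, hence a union-splitting in $\NExt{L}$.
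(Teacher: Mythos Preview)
Your proposal is correct and follows the same blueprint the paper indicates: replay the proof of \Cref{4: Thm characterization of ss splitting logics} with $\gamma^m$ in place of $\epsilon$, and handle the union-splitting claim by exhibiting the splitting pairs $(L + \gamma^m(\A,A),\Log(\A))$.

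One simplification worth noting: your detour through \Cref{4: Claim 1} and \Cref{4: Lem finitely K4m1 stable} to get $\A' \models L$ is unnecessary here. You have already observed that $\A'$ has finite height (via \Cref{4: Lem stable subalgebra reflects fh} from $\A_i$), so $\F'$ is cycle-free. Hence every point of $\F'$ has finite rank, and the Subdivision Lemma then gives that $f'\colon \X' \surj \F'$ is a \emph{genuine} p-morphism. Consequently $\A'$ is an actual subalgebra of $\B'$, and since $\B'$ is a homomorphic image of $\B \models L$, you get $\A' \models L$ directly, with $\F' \models \Kff{m+1}{1}$ as a free byproduct. This is exactly the route taken in \Cref{4: Thm characterization of ss splitting logics}, and it avoids invoking the rank-$\omega$ case of \Cref{4: Claim 1} (which is vacuous here anyway) and the machinery of \Cref{4: Lem finitely K4m1 stable}. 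Your argument via finite $\Kff{m+1}{1}$-stability is not wrong, just heavier than needed in this particular situation where the target $\F_i$ is cycle-free.
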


\begin{proof}
    If $\A$ is a finite s.i.~$\Kff{m+1}{1}$-algebra of finite height, it follows from \Cref{3: Thm scf char} and \Cref{3: Thm splitting fml char} that Jankov formulas as a special type of stable canonical formulas of the form $\gamma^m(\A, A)$ and of the form $\epsilon(\A, A)$ have the same semantic characterization for $\Kff{m+1}{1}$-algebras. Moreover, as $\Kff{m+1}{1}$-stable logics have the fmp \cite{takahashi2025stablecanonicalrulesformulas}, it is a straightforward generalization of \Cref{4: Thm K4m1 stable logic and stable formula} that, for a $\Kff{m+1}{1}$-stable logic $L$ and a logic $L' \in \NExt{L}$, $L'$ is a union-splitting of $\NExt{L}$ iff $L'$ is axiomatizable over $L$ by Jankov formulas. So, this theorem is proved by a similar argument as the proof of \Cref{4: Thm characterization of ss splitting logics}. 
\end{proof}

\begin{remark}
    Contrary to the case of rule systems (\Cref{4: Thm characterization of splitting rule system}), this subclass of union-splittings in $\NExt{\Kff{m+1}{1}}$ is not a lower part of them. We provide a counterexample for $L = \Kf$. Let $\F$ be the finite modal space $\refseeirref$ and $\F'$ be its upset $\irrefsingle$. Let $\A$ and $\A'$ be the dual algebras of $\F$ and $\F'$ respectively. Then, $\A'$ is of finite height since $\F'$ is cycle-free, so $\Kf + \gamma(\A', A')$ is a union-splitting in the subclass (in fact, it is the logic $\logic{K4D}$). It is clear that $\Log(\F) \subsetneq \Log(\F')$, so $\Kf + \gamma(\A, A)$ is a union-splitting such that $\Kf + \gamma(\A, A) \subsetneq \Kf + \gamma(\A', A')$. Suppose for a contradiction that $\Kf + \gamma(\A, A)$ is in the subclass. Then, $\Kf + \gamma(\A, A) = \Kf + \{\gamma(\A_i, A_i): i \in I\}$ for a set $\{\gamma(\A_i, A_i): i \in I\}$ of Jankov formulas where each $\A_i$ is a finite s.i.~$\Kf$-algebra of finite height. Since $\A \not\models \gamma(\A, A)$, there is some $i \in I$ such that $\A \not\models \gamma(\A_i, A_i)$, so $\A_i \inj \B$ for a s.i.~homomorphic image of $\A$. By \Cref{4: Lem stable subalgebra reflects fh}, $\B$ is of finite height. However, up to isomorphism, since $\F'$ is the only cycle-free upset of $\F$, we have $\B = \A'$, and also $\A_i = \A'$ because $\F'$ is the only p-morphic image of itself. Thus, we obtain 
    \[\Kf + \gamma(\A, A) \subsetneq \Kf + \gamma(\A', A') \subseteq \Kf + \{\gamma(\A_i, A_i): i \in I\} = \Kf + \gamma(\A, A),\]
    which is a contradiction. So, the subclass of the union-splittings of $\NExt{\Kf}$ mentioned in \Cref{4: Thm fmp K4m1 stable + sp,4: Thm characterization of K4m1 splitting logics} is not downward closed. 
\end{remark}

From the global viewpoint of the lattice $\NExt{\Kff{m+1}{1}}$, our fmp results also have implications on the \emph{degree of Kripke incompleteness}, initially introduced by Fine \cite{fineIncompleteLogicContainingS41974} (see also \cite[Section 10.5]{czModalLogic1997}). 

\begin{definition} \label{2: Def degree of ki}
    Let $L_0$ be a logic. For a logic $L \in \NExt{L_0}$, the \emph{degree of Kripke incompleteness} of $L$ in $\NExt{L_0}$ is the cardinal 
    \[|\{L' \in \NExt{L_0}:  \KF(L') = \KF(L)\}|,\]
    where $\KF(L)$ is the class of Kripke frames validating $L$.
\end{definition}

Intuitively, the degree of Kripke incompleteness of a logic $L$ measures the number of logics that cannot be distinguished from $L$ by Kripke frames. A logic that has the degree of Kripke incompleteness 1 is also said to be \emph{strictly Kripke complete} in \cite[Section 10.5]{czModalLogic1997}. A celebrated result of Blok \cite{blok1978degree} says that, in $\NExt{\K}$, a logic $L$ has degree 1 iff $L$ is a union-splitting, and $L$ has degree $\conti$ otherwise. Determining the degree of Kripke incompleteness in lattices other than $\NExt{\K}$ (e.g., $\NExt{\Kf}$) is a long-standing open question (e.g., \cite[Problem 10.5]{czModalLogic1997}). It is even unknown whether all union-splittings in $\NExt{\Kf}$ are Kripke complete, let alone what their degree of Kripke incompleteness is. Arguably the best result so far is that logics of finite depth have degree 1 in $\NExt{\Kf}$, where a logic is of \emph{finite depth} if any Kripke frame validating it contains no chain of length more than $n$ for some $n \in \omega$. This result follows from the fact that transitive logics of finite depth are union-splittings in $\NExt{\Kf}$ (see, e.g., \cite{FrameBasedFormulas2008}) and have the fmp \cite{Segerberg1971}. In this respect, \Cref{4: Thm fmp K4m1 stable + sp} identifies a class of union-splittings in $\NExt{\Kf}$ that have the fmp and thus are Kripke complete, and hence have degree of Kripke incompleteness 1 in $\NExt{\Kf}$. Moreover, this class is disjoint from the class of transitive logics of finite depth.

\begin{corollary} \label{4: Cor degree 1}
    Let $m \geq 1$. Every union-splitting in $\NExt{\Kff{m+1}{1}}$ split by a set of finite s.i.~$\Kff{m+1}{1}$-algebras of finite height has the degree of Kripke incompleteness 1 in $\NExt{\Kff{m+1}{1}}$.
\end{corollary}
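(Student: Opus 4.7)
The plan is to combine the fmp guarantee of \Cref{4: Thm fmp K4m1 stable + sp}, applied with the trivially $\Kff{m+1}{1}$-stable base logic $\Kff{m+1}{1}$ itself, with the Blok-style observation that a Jankov formula refutes itself on its defining algebra. Together they will pin down the logic uniquely among extensions of $\Kff{m+1}{1}$ with the same class of Kripke frames.

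Let $L \in \NExt{\Kff{m+1}{1}}$ be a union-splitting split by finite s.i.~$\Kff{m+1}{1}$-algebras $\{\A_i : i \in I\}$ of finite height, so by \Cref{4: Thm K4m1 splitting and Jankov formula} we may write $L = \Kff{m+1}{1} + \{\gamma^m(\A_i, A_i) : i \in I\}$. Since $\Kff{m+1}{1}$ is $\Kff{m+1}{1}$-stable, \Cref{4: Thm fmp K4m1 stable + sp} gives that $L$ has the fmp, hence $L = \Log(\KF(L))$. Now take any $L' \in \NExt{\Kff{m+1}{1}}$ with $\KF(L') = \KF(L)$. The inclusion $L' \subseteq L$ is immediate from Kripke completeness: $L' \subseteq \Log(\KF(L')) = \Log(\KF(L)) = L$. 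For the reverse direction I would argue by contradiction: assume $\gamma^m(\A_i, A_i) \notin L'$ for some $i \in I$. Then some $L'$-algebra $\B$ refutes $\gamma^m(\A_i, A_i)$, and by \Cref{3: Thm scf char} there is a s.i.~homomorphic image $\C$ of $\B$ with $\A_i \inj_{A_i} \C$. Since CDC for the whole universe $A_i$ turns a stable embedding into a genuine modal-algebra embedding, $\A_i$ is a modal subalgebra of $\C$, so
\[L' \subseteq \Log(\B) \subseteq \Log(\C) \subseteq \Log(\A_i).\]
Dualizing to the finite Kripke frame $\F_i$ of $\A_i$ gives $\F_i \models L'$, hence $\F_i \in \KF(L') = \KF(L)$, which forces $\F_i \models \gamma^m(\A_i, A_i)$ and therefore $\A_i \models \gamma^m(\A_i, A_i)$. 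But the identity embedding $\A_i \inj_{A_i} \A_i$ combined with \Cref{3: Thm scf char} gives $\A_i \not\models \gamma^m(\A_i, A_i)$, a contradiction. Hence $L \subseteq L'$, so $L = L'$, and $L$ has degree of Kripke incompleteness $1$ in $\NExt{\Kff{m+1}{1}}$.

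The main obstacle is the reverse inclusion $L \subseteq L'$: one has to translate the semantic hypothesis $\KF(L') = \KF(L)$ into an axiomatic statement about $L'$. This step rests entirely on the self-refuting property of Jankov formulas---the fact that each $\A_i$ refutes its own $\gamma^m(\A_i, A_i)$---which is precisely the reason why Jankov formulas, rather than general stable canonical formulas, appear in the hypothesis. Everything else, including the fmp of $L$ and the dualization between finite modal algebras and finite Kripke frames, is already available from earlier sections.
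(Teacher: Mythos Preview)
Your argument is correct and follows the same route as the paper: apply \Cref{4: Thm fmp K4m1 stable + sp} (with $L=\Kff{m+1}{1}$) to obtain the fmp, hence Kripke completeness, and then conclude degree $1$. The only difference is presentational: the paper simply cites the general Blok fact that a Kripke complete union-splitting in $\NExt{L_0}$ has degree of Kripke incompleteness $1$, whereas you unpack that fact explicitly via the self-refutation property of Jankov formulas and the chain $L' \subseteq \Log(\B) \subseteq \Log(\C) \subseteq \Log(\A_i)$.
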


\begin{proof}
    This follows directly from \Cref{4: Thm fmp K4m1 stable + sp} and the general fact that a Kripke complete union-splitting $L$ in $\NExt{L_0}$ has degree of Kripke incompleteness 1 in $\NExt{L_0}$ \cite{blok1978degree} (see also \cite[Section 10.5]{czModalLogic1997}). 
\end{proof}

\begin{proposition}
    Let $m \geq 1$. Every union-splitting in $\NExt{\Kff{m+1}{1}}$ split by a set of finite s.i.~$\Kff{m+1}{1}$-algebras of finite height is not of finite depth.
\end{proposition}

\begin{proof}
    Let $L = \Kff{m+1}{1} + \{\gamma^m(\A_i, A_i): i \in I\}$ where each $\A_i$ is a finite s.i.~$\Kff{m+1}{1}$-algebra of finite height. Let $\F$ be the Kripke frame $(\omega, <)$. Then, $\F \models \Kff{m+1}{1}$ since $\F$ is transitive. Suppose for a contradiction that $\F \not\models \gamma^m(\A_i, A_i)$ for some $i \in I$. Let $\F_i$ be the dual space of $\A_i$, so it is cycle-free by \Cref{4: Prop cf dual to fh}. By an argument similar to the proof of \Cref{3: Thm scf char} (see, e.g., \cite{stablecanonicalrules,takahashi2025stablecanonicalrulesformulas}), $\F_i$ is a p-morphic image of an upset (i.e., generated subframe) of $\F$. Since any upset of $\F$ is isomorphic to $\F$, an infinite chain, this contradicts that $\F_i$ is cycle-free. Thus, $\F \models \gamma^m(\A_i, A_i)$ for all $i \in I$, and hence $\F \models L$. Therefore, $\F$ witnesses that $L$ is not of finite depth.
\end{proof}

\section{Conclusions and future work} \label{Sec 5}

We have applied the Subdivision Construction to prove the fmp for three classes of logics and rule systems. In all three cases, the key step was to show the corresponding preservation lemma under the Subdivision Construction so that the fmp follows automatically from the general scheme \Cref{4: Thm main scheme}. Further applications of this method remain open. We expect there are more logics and rule systems that are preserved by the construction so that we can use \Cref{4: Thm main scheme} to show their fmp. A characterization, or more modestly, a sufficient condition, of logics or rule systems preserved by the Subdivision Construction will lead to more general fmp results. 

The statement of \Cref{4: Lem main} may remind one of factorization systems in category theory, but a precise categorical formulation remains open. Even stable homomorphisms and the closed domain condition lack a categorical formalization. Although compositions of stable homomorphisms are stable homomorphisms and compositions of homomorphisms are homomorphisms, it is not clear for which closed domain the composition of a stable homomorphism satisfying CDC for $D$ and one satisfying CDC for $D'$ satisfies CDC. It would also be insightful to explore an algebraic presentation of the Subdivision Construction.

Finally, as discussed in the introduction, the series of fmp results presented in this paper can be compared with the general fmp result proved by Zakharyaschev \cite{ZakharyaschevCanonical3} regarding cofinal subframe logics and canonical formulas (see also \cite[Theorem 11.55]{czModalLogic1997}). Despite the similarity in appearance, Zakharyaschev's results have an interesting consequence that every extension of $\Sf$ (or $\IPC$) with finitely many axioms in one variable has the fmp \cite[Theorem 11.58 and Corollary 11.59]{czModalLogic1997}. It remains open whether our results lead to such an elegant fmp result.

\vspace{\baselineskip}
\begin{acknowledgements}
I am very grateful to Nick Bezhanishvili for his supervision of the Master's thesis and his valuable comments on this paper. I would also like to thank Rodrigo Nicolau Almeida and Qian Chen for insightful discussions and feedback. I was supported by the Student Exchange Support Program (Graduate Scholarship for Degree Seeking Students) of the Japan Student Services Organization and the Student Award Scholarship of the Foundation for Dietary Scientific Research. 
    
\end{acknowledgements}

\printbibliography[heading=bibintoc,title=References]

\end{document}